\newcommand{\marginlabel}[1]%
 {\mbox{}\marginpar{\toggedleft\hspace{0pt}\bfseries\sf#1}}
\numberwithin{equation}{subsection}
\newtheorem{thm}{Theorem}[section]
\newtheorem{lem}[thm]{Lemma}
\newtheorem{prop}[thm]{Proposition}
\newtheorem{cor}[thm]{Corollary}
\newtheorem{thmInt}{Theorem}[section]
\theoremstyle{definition}
\newtheorem{defn}[thm]{Definition}
\newtheorem{ex}[thm]{Example}
\newtheorem{rem}[thm]{Remark}
\newcommand{\uC}{{\widetilde C}}
\newcommand{\uY}{{\widetilde Y}}
\newcommand{\uPP}{{\widetilde \PP}}
\newcommand{\A}{\mathcal{A}}
\newcommand{\B}{\mathcal{B}}
\newcommand{\C}{\mathcal{C}}
\newcommand{\CC}{\mathbb{C}}
\newcommand{\E}{\mathcal{E}}
\newcommand{\F}{\mathcal{F}}
\newcommand{\G}{\mathcal{G}}
\renewcommand{\H}{\mathcal{H}}
\newcommand{\I}{\mathcal{I}}
\renewcommand{\L}{\mathcal{L}}
\newcommand{\MMM}{\mathfrak{M}}
\newcommand{\N}{\mathcal{N}}
\newcommand{\NN}{\mathbb{N}}
\newcommand{\NNN}{\mathfrak{N}}
\renewcommand{\O}{\mathcal{O}}
\newcommand{\PP}{\mathbb{P}}
\newcommand{\QQ}{\mathbb{Q}}
\newcommand{\RR}{\mathbb{R}}
\newcommand{\ZZ}{\mathbb{Z}}
\renewcommand{\geq}{\geqslant}
\renewcommand{\leq}{\leqslant}
\newcommand{\st}{\;\vline\;} 
\newcommand{\res}[2]{\left.#1\right|_{#2}} 
\newcommand{\cat}[1]{\begin{bf}#1\end{bf}}
\newcommand{\eps}{\varepsilon}
\newcommand{\lra}{\longrightarrow}
\newcommand{\abs}[1]{\left\vert#1\right\vert}
\newcommand{\set}[1]{\left\{#1\right\}}
\newcommand{\gen}[1]{\left\langle#1\right\rangle}
\DeclareMathOperator{\coh}{\cat{Coh}}
\DeclareMathOperator{\Hom}{Hom}
\DeclareMathOperator{\Ext}{Ext}
\DeclareMathOperator{\Bl}{\mathrm{Bl}}
\DeclareMathOperator{\Forg}{Forg}
\renewcommand{\Im}{\mathrm{Im}\,}
\DeclareMathOperator{\coker}{coker}
\DeclareMathOperator{\rk}{rk}
\DeclareMathOperator{\RHom}{RHom}
\DeclareMathOperator{\cExt}{\mathcal{E}\mathit{xt}}
\DeclareMathOperator{\cEnd}{\mathcal{E}\mathit{nd}}
\DeclareMathOperator{\Pic}{Pic}
\DeclareMathOperator{\ch}{ch}
\DeclareMathOperator{\supp}{supp}
\newcommand{\Db}{{\rm D}^{\rm b}}
\newcommand{\HomDB}{\Hom_{\Db(\PP^2,\B_0)}}
\newenvironment{enumerate*}{\begin{enumerate}[topsep=4pt, partopsep=4pt, itemsep=0pt]}{\end{enumerate}}
\newenvironment{itemize*}{\begin{itemize}[topsep=4pt, partopsep=4pt, itemsep=0pt]}{\end{itemize}}
\newenvironment{description*}{\begin{description}[topsep=4pt, partopsep=4pt, itemsep=0pt]}{\end{description}}
\begin{document}

\title[ACM bundles on cubic 3-folds]{ACM bundles on cubic threefolds}

\author[M.~Lahoz, E.~Macr\`i, and P.~Stellari]{Mart\'{\i} Lahoz, Emanuele Macr\`i, and Paolo Stellari}

\address{M.L.: Institut de Math\'{e}matiques de Jussieu -- Paris Rive Gauche (UMR 7586), Universit\'{e} Paris Diderot / Universit\'{e} Pierre et Marie Curie, B\^{a}timent Sophie Germain, Case 7012, 75205 Paris Cedex 13, France}
\email{marti.lahoz@imj-prg.fr}
\urladdr{\url{http://webusers.imj-prg.fr/~marti.lahoz/}}

\address{E.M.: Department of Mathematics, The Ohio State University, 231 W 18th Avenue, Columbus, OH 43210, USA}
\curraddr{Department of Mathematics, Northeastern University, 360 Huntington Avenue, Boston, MA 02115, USA}
\email{e.macri@neu.edu}
\urladdr{\url{http://nuweb15.neu.edu/emacri/}}

\address{P.S.: Dipartimento di Matematica ``F.~Enriques'', Universit{\`a} degli Studi di Milano, Via Cesare Saldini 50, 20133 Milano, Italy}
\email{paolo.stellari@unimi.it}
\urladdr{\url{http://users.unimi.it/stellari}}

\thanks{M.~L.~ is partially supported by SFB/TR 45, Fondation Math\'ematique Jacques Hadamard (FMJH) and MTM2012-38122-C03-02.
E.~M.~ is partially supported by the NSF grants DMS-1001482/DMS-1160466 and DMS-1302730/DMS-1523496, the Hausdorff Center for Mathematics, Universit\"at Bonn, and SFB/TR 45. P.~S.~ is partially supported by the grants FIRB 2012 ``Moduli Spaces and Their Applications'' and
the national research project ``Geometria delle Variet\`a Proiettive'' (PRIN 2010-11).}

\keywords{Arithmetically Cohen-Macaulay vector bundles, cubic threefolds}

\subjclass[2010]{18E30, 14E05}

\begin{abstract}
	We study ACM bundles on cubic threefolds by using derived category techniques.
	We prove that the moduli space of stable Ulrich bundles of any rank is always non-empty by showing that it is birational to a moduli space of semistable torsion sheaves on the projective plane endowed with the action of a Clifford algebra. We describe this birational isomorphism via wall-crossing in the space of Bridgeland stability conditions, in the example of instanton sheaves of minimal charge.
	\end{abstract}

\maketitle


\section*{Introduction}

Fourier--Mukai techniques to study stable vector bundles on surfaces have been an extremely useful tool for more than 30 years.
In this paper, we use a construction by Kuznetsov to generalize such circle of ideas and study Arithmetically Cohen-Macaulay (ACM) stable vector bundles on smooth projective cubic hypersurfaces.
The basic idea is to use a semiorthogonal decomposition of the derived category of coherent sheaves to ``reduce dimension''.
The disadvantage of this approach is that we have to consider complexes and a notion of stability for them; this forces us to restrict to the cubic threefold case (and to special examples in the fourfold case, treated in a forthcoming paper).
The advantage is that this may lead to a general approach to study ACM stable bundles in higher dimensions.

\subsection*{ACM bundles and semiorthogonal decompositions}

Let $Y\subset\PP^{n+1}$ be a smooth complex cubic $n$-fold, and let $\O_Y(H)$ denote the corresponding very ample line bundle.
A vector bundle $F$ on a $Y$ is called \emph{Arithmetically Cohen--Macaulay} if $\dim H^i(Y,F(jH))=0$, for all $i=1,\ldots,n-1$ and all $j\in\ZZ$.
In algebraic geometry, the interest in studying stable ACM bundles (and their moduli spaces) on projective varieties arose from the papers \cite{Beauvillecubic, Druel, Iliev, IM, MT}.
In fact, in \cite{Druel} it is proved that the moduli space of rank $2$ instanton sheaves on a cubic threefold is isomorphic to the blow-up of the intermediate Jacobian in (minus) the Fano surface of lines.
The intermediate Jacobian can be used both  to control the isomorphism type of the cubic, via the Clemens--Griffiths/Tyurin Torelli Theorem, and to prove the non-rationality of the cubic (see \cite{CG}).
From a more algebraic viewpoint, ACM bundles correspond to {\em Maximal Cohen-Macaulay} (MCM) modules over the graded ring associated to the projectively embedded variety, and as such they have been extensively studied in the past years (see, e.g., \cite{Yo}).

In a different direction, Kuznetsov studied in \cite{Kuz:V14} semiorthogonal decompositions of the derived category of a cubic hypersurface.
In fact, as we review in Section \ref{subsec:Semiorth}, there exists a non-trivial triangulated subcategory $\mathbf{T}_Y\subset\Db(Y)$, which might encode the birational information of the cubic.
For example, in the case of a cubic threefold $Y$, it is proven in \cite{BMMS} that the isomorphism class of $Y$ can be recovered directly from $\mathbf{T}_Y$ as a sort of ``categorical version'' of the Clemens--Griffiths/Tyurin Torelli Theorem.
In \cite{Kuz:4fold} it is conjectured that a cubic fourfold is rational if and only if the category $\mathbf{T}_Y$ is equivalent to the derived category of a K3 surface. For the interpretation of $\mathbf{T}_Y$ as a category of matrix factorization, we refer to \cite{OrlovMatrix} while \cite{BB} deals with the interpretation as a summand of the Chow motive of Y.

For cubic threefolds, a different description of $\mathbf{T}_Y$ is available, via Kuznetsov's semiorthogonal decomposition of the derived category of a quadric fibration (see \cite{Kuz:Quadric}).
Indeed, as we review in Section \ref{subsec:quadric}, $\mathbf{T}_Y$ is equivalent to a full subcategory of the derived category of sheaves on $\PP^2$ with the action of a sheaf of Clifford algebras $\B_0$ (determined by fixing a structure of quadric fibration on the cubic).
We denote by $\Xi:\mathbf{T}_Y\hookrightarrow\Db(\PP^2,\B_0)$ the induced fully-faithful functor.
The key observation (which is not surprising if we think to ACM bundles as MCM modules, see \cite[Section 2]{CH1} and \cite{OrlovMatrix}) is the following: given a stable ACM bundle $F$ on $Y$, a certain twist of $F$ by the very ample line bundle $\O_Y(H)$ belongs to $\mathbf{T}_Y$ (this is Lemma \ref{lem:ACM}).
Hence, the idea is to study basic properties of ACM bundles on $Y$ (e.g., existence, irreducibility of the moduli spaces, etc.) by using the functor $\Xi$, and so by considering them as complexes of $\B_0$-modules on $\PP^2$.
The principle is that, since $\Db(\PP^2,\B_0)$ has dimension $2$, although it is not intrinsic to the cubic, it should still lead to several simplifications.
The main question now becomes whether there exists a notion of stability for objects in $\Db(\PP^2,\B_0)$ which corresponds to the usual stability for ACM bundles.
In this paper we suggest that such a notion of stability in $\Db(\PP^2,\B_0)$ should be \emph{Bridgeland stability} \cite{Br}, for cubic threefolds.

\subsection*{Results}

Let $Y$ be a cubic threefold.
By fixing a line $l_0$ in $Y$, the projection from $l_0$ to $\PP^2$ gives a structure of a conic fibration on (a blow-up of) $Y$.
The sheaf of algebras $\B_0$ on $\PP^2$ mentioned before is nothing but the sheaf of even parts of the Clifford algebras associated to this conic fibration (see \cite{Kuz:4fold}).
Denote by $\coh(\PP^2,\B_0)$ the abelian category of coherent $\B_0$-modules, and by $\Db(\PP^2,\B_0)$ the corresponding bounded derived category.

As a first step in the study of ACM bundles on $Y$, we consider the moduli spaces $\MMM_d$ of Gieseker stable $\B_0$-modules in $\coh(\PP^2,\B_0)$ with Chern character $(0,2d,-2d)$, for any $d\geq 1$.
These moduli spaces are tightly related to the geometry of $Y$ and the first general result we can prove is the following (see Theorem \ref{thm:Md}).

\begin{thmInt}\label{thm:mainB0}
	The moduli space $\MMM_d$ is irreducible with a morphism $\varUpsilon: \MMM_d \to \abs{\O_{\PP^2}(d)}$
	whose fiber on a general smooth curve $C$ in $\abs{\O_{\PP^2}(d)}$ is the disjoint union of $2^{5d-1}$ copies of the Jacobian of $C$.
	Moreover, the stable locus $\MMM^s_d$ is smooth of dimension $d^2+1$.
\end{thmInt}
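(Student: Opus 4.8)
The plan is to realize $\MMM_d$ as a family of (twisted) Jacobians over the linear system $\abs{\O_{\PP^2}(d)}$ and then to analyze this family both fibrewise and via monodromy. First I would construct the morphism $\varUpsilon$. An object $E\in\MMM_d$ has $\ch(E)=(0,2d,-2d)$, so the underlying $\O_{\PP^2}$-module is torsion. Since $\B_0$ has $\O_{\PP^2}$-rank $4$ and is generically a degree-$2$ Azumaya algebra, such a $\B_0$-module is supported on a curve $C$ of degree $d$ along which it has generic rank $2$; stability forces this support to be pure of dimension one, so its Fitting support defines a point $\varUpsilon(E)=C\in\abs{\O_{\PP^2}(d)}$, and this is functorial in families. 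The last component $\ch_2=-2d$ only fixes the degree of the line-bundle data, hence selects the torsor type of the fibre without affecting its isomorphism class.

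Next I would compute the fibre over a general $C$, which is smooth of genus $g=(d-1)(d-2)/2$ and meets the discriminant quintic $\Delta\subset\PP^2$ transversally in $5d$ points. Restricting the conic fibration to $C$, the algebra $\B_0|_C$ is Azumaya away from the $5d$ points $C\cap\Delta$ and degenerates at each of them. Over the Azumaya locus a stable rank-$1$ $\B_0|_C$-module is étale-locally a line bundle, so the continuous moduli are governed by $\Pic(C)$; at each of the $5d$ degenerate conics the module picks up one binary local invariant, namely a choice among the two components of the degenerate fibre. A single global compatibility condition (a parity constraint forcing the local choices to glue into a genuine $\B_0|_C$-module, equivalently the non-existence of the associated branched double cover at an odd branch number) cuts the $2^{5d}$ a priori choices down to $2^{5d-1}$, and each resulting component is a torsor under $\Pic^0(C)=\mathrm{Jac}(C)$, hence isomorphic to $\mathrm{Jac}(C)$. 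The technical point here is to pin down the local model of $\B_0$ at a node of a fibre and to verify that the stable $\B_0$-modules are exactly these ``rank $1$'' objects, so that no spurious components arise.

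The \emph{main obstacle} is the irreducibility of the total space, since the general fibre is already disconnected into $2^{5d-1}$ pieces. For this I would run a monodromy argument over the open locus $U\subset\abs{\O_{\PP^2}(d)}$ of smooth curves meeting $\Delta$ transversally: as $C$ varies in $U$, the $5d$ points $C\cap\Delta$ together with the attached binary data undergo monodromy, and I must show this monodromy permutes the $2^{5d-1}$ components transitively. Irreducibility of $\Delta$ and a standard uniform-position argument give the full symmetric group on the $5d$ points; combining this with the Picard--Lefschetz vanishing-cycle action that flips a local binary invariant when two of the points collide (as $C$ acquires a tangency with $\Delta$), the monodromy group surjects onto the relevant quotient of $(\ZZ/2)^{5d}$ and acts transitively on the $2^{5d-1}$ components. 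Since $U$ is irreducible and $\varUpsilon^{-1}(U)\to U$ is a fibre bundle whose components are permuted transitively, $\varUpsilon^{-1}(U)$ is irreducible, and taking closure shows $\MMM_d$ is irreducible.

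Finally, for dimension and smoothness I would work on the stable locus. The dimension count
\begin{equation*}
\dim\varUpsilon^{-1}(C)+\dim\abs{\O_{\PP^2}(d)}=\frac{(d-1)(d-2)}{2}+\frac{d(d+3)}{2}=d^2+1
\end{equation*}
yields the expected value. Smoothness of $\MMM^s_d$ then follows from deformation theory: one checks that the tangent space $\Ext^1_{\B_0}(E,E)$ has dimension $d^2+1$ and that the obstruction space, the trace-free part of $\Ext^2_{\B_0}(E,E)$, vanishes for stable $E$, using Serre duality in $\Db(\PP^2,\B_0)$ together with stability. Alternatively, smoothness can be read off directly from the identification of $\varUpsilon^{-1}(U)$ with a smooth $\mathrm{Jac}$-fibration over the smooth base $U$, the global moduli being smooth precisely along the stable (simple) locus.
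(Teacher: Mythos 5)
Your fibre computation and your smoothness argument are essentially the ones in the paper: the paper makes your ``binary local invariants'' precise by passing to the root stack $\widehat{\PP^2}$ of $\O_{\PP^2}(\Delta)$, restricting to the twisted curve $\widehat C$, trivializing the Azumaya algebra there as $\cEnd(E_{C,0})$, and classifying line bundles on $\widehat C$ via Cadman's result as $\psi^*L\otimes \O_{\widehat C}\bigl(\sum_i \tfrac{\lambda_i}{2}p_i\bigr)$; the parity constraint then arises from matching $\ch_2=-2d$ (all $2^{5d}$ choices of $\lambda$ give genuine modules, only the even ones give the prescribed Chern character -- your parenthetical about ``gluing into a genuine module'' is slightly off, but the count $2^{5d-1}$ is the same). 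Likewise, smoothness of $\MMM_d^s$ via $\Ext^2(G,G)=0$ from Serre duality ($-\otimes_{\B_0}\B_{-1}[2]$) plus stability is exactly the paper's Step 1.

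The genuine gap is in your irreducibility argument. Even granting the monodromy transitivity on the $2^{5d-1}$ components (which you assert but do not prove, and which requires a real analysis of the local degeneration at a tangency of $C$ with $\Delta$), what you obtain is only that $\overline{\varUpsilon^{-1}(U)}$ is irreducible, where $U$ is the locus of smooth curves transverse to $\Delta$. The statement ``taking closure shows $\MMM_d$ is irreducible'' silently assumes that $\varUpsilon^{-1}(U)$ is dense in $\MMM_d$, i.e., that no irreducible component of $\MMM_d$ lies entirely over the complement of $U$ (curves that are singular, non-reduced, or tangent to $\Delta$). Nothing in your argument rules this out: the fibre dimension of $\varUpsilon$ over bad curves is not bounded by the genus $(d-1)(d-2)/2$ a priori (think of sheaves supported on a multiple line), so a dimension count does not force every component to dominate $\abs{\O_{\PP^2}(d)}$; and a component consisting entirely of strictly semistable points, or of stable sheaves supported on non-integral curves, is not touched by your fibration analysis. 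This is precisely the issue the paper's Step 3 is designed to handle, by a completely different route following Kaledin--Lehn--Sorger: connectedness is proved by induction on $d$, using that the strictly semistable locus is the union of the images of $\varphi_{d_1,d_2}:\MMM_{d_1}\times\MMM_{d_2}\to\MMM_d$, together with Mukai's vector-bundle argument to exclude a connected component consisting purely of stable objects; then normality of $\MMM_d$ is deduced from the triviality of the Kuranishi map (again the $\Ext^2$-vanishing), and irreducibility follows from connected plus normal. To salvage your approach you would need, at minimum, a proof that every semistable $\B_0$-module with $\ch=(0,2d,-2d)$ deforms to one supported on a curve in $U$ -- a statement essentially equivalent to the density you assumed.
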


The geometry of $\MMM_1$ and $\MMM_2$ can be understood more explicitly.
Indeed, it turns out that $\MMM_1$ is the Fano variety of lines in $Y$ blown-up at the line $l_0$ (see Proposition \ref{prop:d=1}).
On the other hand $\MMM_2$ is a birational model of the intermediate Jacobian of $Y$ (see Theorem \ref{thm:d=2} for a more detailed statement).
Both results are obtained via wall-crossing in the space of Bridgeland stability conditions on the triangulated category $\Db(\PP^2,\B_0)$. Notice that such a wall-crossing depends on the choice of a line inside the cubic threefold.
As a corollary, one gets that the moduli space of instanton sheaves on $Y$ (of charge $2$) is isomorphic to a moduli space of Bridgeland stable objects in $\Db(\PP^2,\B_0)$ with prescribed Chern character (see Theorem \ref{thm:d=2}).

As $\cat{T}_Y$ can be naturally identified with a full subcategory of $\Db(\PP^2,\B_0)$, via the functor $\Xi$, one may want to consider objects of $\MMM_d$ which are contained in $\cat{T}_Y$.
These generically correspond to ACM bundles on $Y$.
This way we can achieve the following theorem which generalizes one of the main results in \cite{CH}.

\begin{thmInt}\label{thm:main3folds}
	Let $Y$ be a cubic threefold.
	Then, for any $r\geq2$, the moduli space of stable rank $r$ Ulrich bundles is non-empty and smooth of dimension $r^2+1$.
\end{thmInt}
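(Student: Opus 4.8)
The plan is to use the functor $\Xi$ to identify the moduli space $\mathcal{U}_r$ of stable rank $r$ Ulrich bundles on $Y$ with a dense open subset of $\MMM_r$, and then to read off non-emptiness, smoothness and dimension directly from Theorem \ref{thm:Md}. First I would pin down the numerics. An Ulrich bundle is an ACM bundle with the maximal possible number of generators, equivalently one admitting a linear $\O_{\PP^4}$-resolution; in particular it is ACM, so by Lemma \ref{lem:ACM} a suitable twist $F(tH)$ lies in $\mathbf{T}_Y$. Using Kuznetsov's formula for the action of $\Xi$ on Chern characters, I would compute $\ch(\Xi(F(tH)))$ and check that, up to shift, it equals $(0,2r,-2r)$. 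This forces $d=r$ and shows that $\Xi(F(tH))$ is (a shift of) an object of $\coh(\PP^2,\B_0)$ of the class defining $\MMM_r$.

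Next I would promote this to a morphism $\mathcal{U}_r\to\MMM^s_r$. The key points are that slope/Gieseker stability of $F$ on $Y$ translates into Gieseker stability of $\Xi(F(tH))$ as a $\B_0$-module, and that $F\mapsto\Xi(F(tH))$ is injective on isomorphism classes because $\Xi$ is fully faithful. Moreover, since $\mathbf{T}_Y$ is admissible in $\Db(Y)$ and $\Xi$ is fully faithful, one has $\Ext^i_{Y}(F,F)\cong\Ext^i_{\B_0}(\Xi(F(tH)),\Xi(F(tH)))$ for all $i$, so the tangent and obstruction spaces of $\mathcal{U}_r$ at $F$ coincide with those of $\MMM_r$ at the corresponding point.

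The heart of the argument is the reverse direction: showing that a \emph{generic} $E\in\MMM_r$ arises this way. Here I would first characterize the image $\Xi(\mathbf{T}_Y)\subset\Db(\PP^2,\B_0)$ by the vanishing of $\Hom^{\bullet}$ against the generators of the orthogonal complement of $\Xi(\mathbf{T}_Y)$ in the relevant semiorthogonal decomposition, and show that these vanishings hold on a dense open locus of the irreducible $\MMM_r$. Then, for such $E$, I would prove that $\Xi^{-1}(E)$, shifted and twisted back, is an honest stable Ulrich bundle: locally free, ACM, and with Ulrich numerics. I expect this last step to be the main obstacle. Establishing local-freeness (ruling out torsion and non-reflexivity) and the ACM vanishing $H^i(Y,\Xi^{-1}(E)(jH))=0$ requires translating the semiorthogonality of $E$ against skyscrapers and structure sheaves of lines in $\Db(\PP^2,\B_0)$ back into cohomological conditions on $Y$, and controlling the closed locus where $E\notin\Xi(\mathbf{T}_Y)$ or where $\Xi^{-1}(E)$ degenerates; this locus must be shown to have positive codimension so that the correspondence is genuinely birational rather than merely dominant.

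Finally I would conclude. The two directions exhibit $\mathcal{U}_r$ as birational to the irreducible moduli space $\MMM_r$, which is non-empty by Theorem \ref{thm:Md}; hence $\mathcal{U}_r\neq\emptyset$. The $\Ext$-group identification from the second paragraph, combined with the smoothness of $\MMM^s_r$ of dimension $r^2+1$, shows that deformations of a stable Ulrich bundle are unobstructed with $\dim\Ext^1_Y(F,F)=r^2+1$, so $\mathcal{U}_r$ is smooth of dimension $r^2+1$, as claimed.
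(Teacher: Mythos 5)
Your overall strategy --- transporting the problem to $\MMM_r$ via $\Xi_3$ and reading off non-emptiness from Theorem \ref{thm:Md} --- is indeed the paper's strategy, but \emph{only for $r=2,3$}, and the step you defer as ``the main obstacle'' is exactly where that strategy stops working beyond rank $3$. The paper's Proposition \ref{prop:exACM} shows that for a \emph{general} $G\in\NNN_d$ the object $\Xi_3^{-1}(G)$ is a stable ACM (hence Ulrich) bundle, but this is not a formal openness-plus-one-good-point argument: after translating the ACM vanishings through Lemma \ref{lem:TgBundle}, one must prove $\Hom_{\Db(\PP^2)}(\Omega_{\PP^2}(2h),G)=0$ for generic $G$, and the natural test objects $\Xi_3(F_d)$ of Example \ref{ex:objMd} (coming from rational curves of degree $d$) actually \emph{fail} this vanishing, since $F_d$ is not locally free; so semicontinuity from a known point is unavailable for it. Instead the paper classifies the possible splitting types of $G$ on its support curve and excludes the bad ones by dimension counts of curve families inside $Y$: conics ($4<5=\dim\MMM_2$), elliptic quintics meeting $l_0$ and plane cubics ($8,9<10=\dim\MMM_3$). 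These counts are special to $d=2,3$, the number of splitting types grows with $d$, and no analogue is given (or known) for $d\geq4$. Relatedly, your stability claim for $\Xi_3^{-1}(E)$ is deduced in the paper from the non-existence of rank-one Ulrich bundles, an argument that also breaks at $r=4$, where a rank-$4$ Ulrich bundle can be S-equivalent to a sum of two rank-$2$ ones.

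For $r\geq4$ the paper therefore abandons the correspondence with $\MMM_r$ altogether and invokes the Casanellas--Hartshorne deformation argument \cite[Thm.~5.7]{CH}: extensions of the already-constructed stable Ulrich bundles of ranks $2$ and $3$ are simple with $\Ext^2=0$, so they deform to stable Ulrich bundles of every rank $r\geq4$. Your proposal omits this bootstrapping entirely and instead asserts a birational identification of $\mathcal{U}_r$ with $\MMM_r$ for all $r$, which the paper does not prove and explicitly leaves as an expectation (it would amount to generalizing Theorem \ref{thm:d=2} to $r>2$). A second ingredient you assume without justification is that Gieseker stability of $F$ on $Y$ translates into Gieseker stability of $\Xi_3(F)$ as a $\B_0$-module: the paper establishes this only for rank-$2$ instanton bundles (Lemma \ref{lem:inst>B0stab}), via a lengthy case-by-case analysis, and proves nothing of the sort in higher rank. (Your smoothness count via full faithfulness is essentially Remark \ref{rmk:redone}, but note that even there one needs the image to be a semistable torsion $\B_0$-module before Lemma \ref{lem:Columbus} applies.) So the proposal has a genuine gap at its core step, and it misses the mechanism --- deformation of extensions --- by which the paper actually reaches all ranks.
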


Recall that an \emph{Ulrich bundle} $E$ is an ACM bundle whose graded module $\bigoplus_{m\in\ZZ}H^0(Y,E(m))$ has $3\rk(E)$ generators in degree $1$ (see Section \ref{subsec:Ulrich} for a discussion about the normalization chosen). If compared to the first part of \cite[Thm. 1.2]{CH}, our result removes the genericity assumption.

We believe that Theorem \ref{thm:mainB0} will also be useful in studying the irreducibility of the moduli space of stable Ulrich bundles.
In fact, we expect the functor $\Xi$ to map all stable Ulrich bundles on $Y$ into Bridgeland stable objects in $\Db(\PP^2,\B_0)$, thus generalizing Theorem \ref{thm:d=2} to the case $r>2$.

It is maybe worth pointing out that the proof of Theorem \ref{thm:main3folds}, which is contained in Section \ref{subsec:Ulrich}, is based upon the same deformation argument as in \cite{CH}.
The main difference is that, by using our categorical approach and the moduli spaces $\MMM_d$, we can make it work also for small rank ($r=2,3$).
Indeed, the argument in \cite{CH} relies on the existence of an ACM curve on $Y$ of degree 12 and genus 10, proved by Gei{\ss} and Schreyer in the appendix to \cite{CH}, only for a generic cubic threefold, using {\tt Macaulay2}.
Moreover, although we have focused on cubic threefolds, we believe that our approach might work for any quadric fibration. In particular, other interesting Fano threefolds of Picard rank 1 are the intersection of three quadrics in $\PP^{6}$, the quartic hypersurface containing a double line, or the double covering of $\PP^3$ ramified along a quartic with an ordinary double point (see \cite{be}).

\subsection*{Related works}

The idea of using semiorthogonal decompositions to study ACM bundles by reducing dimension is influenced by \cite{KuzNew}.
More precisely, in \emph{loc. cit.}, Kuznetsov proposes to understand the geometry of moduli spaces of instanton bundles (of any charge) on cubic threefolds via the category $\Db(\PP^2,\B_0)$ and the functor $\Xi$.

There have been many studies about ACM bundles of rank $2$ in dimension $2$ and $3$.
Besides the already mentioned results on instanton bundles on cubic threefolds, some papers in this direction are \cite{AM,BrambillaFaenzi,ChiantiniFaenzi, ChiantiniMadonna2,Madonna2000}.
The higher rank case has been investigated in \cite{ArrondoGrana,AM,Madonna2005}.
The papers \cite{MiroRoigPons} and \cite{PonsLlopisTonini} give few examples of indecomposable ACM bundles of arbitrarily high rank.
The already mentioned papers \cite{CH1,CH} contain a systematic study of stable ACM bundles in higher rank on cubic surfaces and threefolds.
A general existence result for Ulrich bundles on hypersurfaces is in \cite{HUB}.

Regarding preservation of stability via the functor $\Xi$, the papers \cite{BMMS,MS} study the case of ideal sheaves of lines on a cubic threefold.

\subsection*{Plan of the paper}

The paper is organized as follows.
Section \ref{sec:prelim} collects basic facts about semiorthogonal decompositions and general results about ACM bundles on cubic hypersurfaces.
In particular, we show that stable ACM bundles are objects of $\cat{T}_Y$ (up to twists) and state a simple cohomological criterion for a coherent sheaf in $\cat{T}_Y$ to be ACM (see Lemmas \ref{lem:ACM} and \ref{lem:viceversa}).
In Section \ref{subsec:quadric} we review Kuznetsov's work on quadric fibrations.

Section \ref{sec:3folds} concerns the case of cubic threefolds where the first two results mentioned above are proved. The argument is based on a detailed description of the easiest case of $\MMM_1$ which involves Bridgeland stability conditions (see Section \ref{subsec:modMd1}). Some background on the latter subject is provided in the same section. In Sections \ref{subsec:proof} and \ref{subsec:Ulrich} we prove Theorems \ref{thm:mainB0} and \ref{thm:main3folds} respectively. The geometric applications to some simple wall-crossing phenomena are described in detail in Section \ref{sec:modMd2}, where we study the geometry of $\MMM_2$ and its relation to instanton bundles.

\subsection*{Notation}

Throughout this paper we work over the complex numbers.
For a smooth projective variety $X$, we denote by $\Db(X)$ the bounded derived category of coherent sheaves on $X$.
We refer to \cite{huy} for basics on derived categories.
If $X$ is not smooth, we denote by $X_{\mathrm{reg}}$ the regular part of $X$. We set $\hom^i(-,-):=\dim\Hom^i(-,-)$, where $\Hom^i(-,-)$ is computed in an abelian or triangulated category which will be specified each time.
This paper assumes some familiarity with basic constructions and definitions about moduli spaces of stable bundles.
For example, we do not define explicitly the notion of slope and Gieseker stability, of Harder--Narasimhan (HN) and Jordan--H\"{o}lder (JH) factors of a (semistable) vector bundle.
For this, we refer to \cite{HL}.
The same book is our main reference for the standard construction of moduli spaces of stable sheaves.
For the twisted versions of them we refer directly to \cite{simp, Lieb:twisted}.

In the following, we will use the short-hand notation {\em (semi)stable} to refer to {\em stable} (respectively, \emph{semistable}).
Gieseker stability will be simply called stability, while slope stability will be called $\mu$-stability.

\section{The derived category of a cubic hypersurface}\label{sec:prelim}

In this section we show that, on a smooth cubic hypersurface $Y$, all stable ACM bundles are well behaved with respect to Kuznetsov's semiorthogonal decomposition of the derived category.
In particular, after recalling the notion of semiorthogonal decomposition of a derived category, we show that stable ACM bundles on $Y$ belong to the non-trivial component $\cat{T}_Y$ of $\Db(Y)$, up to twist by line bundles.
We also introduce one of the basic tools to study the derived category of cubic threefolds: Kuznetsov's description of the derived category of a quadric fibration.

\subsection{Semiorthogonal decompositions}\label{subsec:Semiorth}
Let $X$ be a smooth projective variety and let $\Db(X)$ be its bounded derived category of coherent sheaves.

\begin{defn}\label{def:semiorth}
	A \emph{semiorthogonal} decomposition of $\Db(X)$ is a sequence of full triangulated subcategories $\cat{T}_1,\ldots,\cat{T}_m\subseteq\Db(X)$ such that $\Hom_{\Db(X)}(\cat{T}_i,\cat{T}_j)=0$, for $i>j$ and, for all $G\in\Db(X)$, there exists a chain of morphisms in $\Db(X)$
	\[
	0=G_m\to G_{m-1}\to\ldots\to G_1\to G_0=G
	\]
	with $\mathrm{cone}(G_i\to G_{i-1})\in\cat{T}_i$, for all $i=1,\ldots,m$.
	We will denote such a decomposition by $\Db(X)=\langle\cat{T}_1,\ldots,\cat{T}_m\rangle$.
\end{defn}

\begin{defn}\label{def:excobjgeneral}
	An object $F\in\Db(X)$ is \emph{exceptional} if $\Hom_{\Db(X)}^p(F,F)=0$, for all $p\neq0$, and $\Hom_{\Db(X)}(F,F)\cong\CC$.
	A collection $\{F_1,\ldots,F_m\}$ of objects in $\Db(X)$ is called an \emph{exceptional collection} if $F_i$ is an exceptional object, for all $i$, and $\Hom_{\Db(X)}^p(F_i,F_j)=0$, for all $p$ and all $i>j$.
\end{defn}

\begin{rem}\label{rmk:exceptional}
	An exceptional collection $\{F_1,\ldots,F_m\}$ in $\Db(X)$ provides a semiorthogonal decomposition
	\[
	\Db(X)=\langle\cat{T},F_1,\ldots,F_m\rangle,
	\]
	where, by abuse of notation, we denoted by $F_i$ the triangulated subcategory generated by $F_i$ (equivalent to the bounded derived category of finite dimensional vector spaces).
	Moreover
	\[
	\cat{T}:=\langle F_1,\ldots,F_m\rangle^\perp=\left\{G\in\Db(X)\,:\,\Hom^p(F_i,G)=0,\text{ for all }p\text{ and }i\right\}.
	\]
	Similarly, one can define ${}^\perp\langle F_1,\ldots,F_m\rangle=\left\{G\in\cat{T}\,:\,\Hom^p(G,F_i)=0,\text{ for all }p\text{ and }i\right\}$.
\end{rem}

Let $F\in \Db(X)$ be an exceptional object.
Consider the two functors, respectively \emph{left and right mutation}, $\cat{L}_F,\cat{R}_F:\Db(X)\to\Db(X)$ defined by
\begin{equation}\label{eqn:LRmutation}
	\begin{split}
	\cat{L}_F(G)&:=\mathrm{cone}\left(\mathrm{ev}:\mathrm{RHom}(F,G)\otimes F\to G\right)\\
	\cat{R}_F(G)&:=\mathrm{cone}\left(\mathrm{ev}^\vee:G\to\mathrm{RHom}(G,F)^\vee\otimes F\right)[-1],
	\end{split}
\end{equation}
where $\mathrm{RHom}(-,-):=\oplus_{p}\Hom_{\Db(X)}^p(-,-)[-p]$.
More intrinsically, let $\iota_{{}^\perp F}$ and $\iota_{F^\perp}$ be the full embeddings of ${}^\perp F$ and $F^\perp$ into $\Db(X)$.
Denote by $\iota^*_{{}^\perp F}$ and $\iota^!_{{}^\perp F}$ the left and right adjoints of $\iota_{{}^\perp F}$ and by $\iota^*_{{F}^\perp}$ and $\iota^!_{{F}^\perp}$ the left and right adjoints of $\iota_{{F}^\perp}$.
Then $\cat{L}_F=\iota_{F^\perp}\circ\iota^*_{F^\perp}$, while $\cat{R}_F=\iota_{{}^\perp F}\circ\iota^!_{{}^\perp F}$ (see, e.g., \cite[Sect.~2]{KuzHPD}).

The main property of mutations is that, given a semiorthogonal decomposition of $\Db(X)$
	\[
	\langle\cat{T}_1,\ldots,\cat{T}_k,F,\cat{T}_{k+1},\ldots,\cat{T}_n\rangle,
	\]
we can produce two new semiorthogonal decompositions
	\[
	\langle\cat{T}_1,\ldots,\cat{T}_k,\cat{L}_F(\cat{T}_{k+1}),F,\cat{T}_{k+2},\ldots,\cat{T}_n\rangle
	\]
and
	\[
	\langle\cat{T}_1,\ldots,\cat{T}_{k-1},F,\cat{R}_F(\cat{T}_k),\cat{T}_{k+1},\ldots,\cat{T}_n\rangle.
	\]

Let us make precise the relation between left and right mutations that will be used throughout the paper.
Denote by $S_X=(-)\otimes\omega_X[\dim(X)]$ the Serre functor of $X$.
We have the following lemma (which actually works more generally for any admissible subcategory in $\Db(X)$).

\begin{lem}\label{lem:adjmutations}
	If $F$ is an exceptional object then $\cat{R}_{S_X(F)}$ is right adjoint to $\cat{L}_{F}$ while $\cat{R}_F$ is left adjoint to $\cat{L}_F$.
\end{lem}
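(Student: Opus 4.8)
The plan is to prove the two adjunctions separately, deducing the second from the first by means of Serre duality. Throughout I would work with the defining triangles of \eqref{eqn:LRmutation}, namely $\RHom(F,G)\otimes F\to G\to\cat{L}_F(G)$ and $\cat{R}_F(G)\to G\to\RHom(G,F)^\vee\otimes F$, together with the two elementary facts that $\cat{L}_F(G)\in F^\perp$ and $\cat{R}_F(G)\in{}^\perp F$ for every $G$. Both of these follow at once by applying $\Hom^\bullet(F,-)$, respectively $\Hom^\bullet(-,F)$, to the relevant triangle and using that $F$ is exceptional, so that the evaluation/coevaluation map becomes an isomorphism on the $F$-component.

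First I would establish that $\cat{R}_F$ is left adjoint to $\cat{L}_F$, i.e. produce a natural isomorphism $\Hom(\cat{R}_F(G),H)\cong\Hom(G,\cat{L}_F(H))$. The idea is to show that both sides are canonically isomorphic to $\Hom(\cat{R}_F(G),\cat{L}_F(H))$. Applying $\Hom(\cat{R}_F(G),-)$ to the triangle defining $\cat{L}_F(H)$ and using $\cat{R}_F(G)\in{}^\perp F$, so that $\Hom^\bullet(\cat{R}_F(G),\RHom(F,H)\otimes F)=0$, gives $\Hom(\cat{R}_F(G),H)\cong\Hom(\cat{R}_F(G),\cat{L}_F(H))$. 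Dually, applying $\Hom(-,\cat{L}_F(H))$ to the triangle defining $\cat{R}_F(G)$ and using $\cat{L}_F(H)\in F^\perp$ gives $\Hom(G,\cat{L}_F(H))\cong\Hom(\cat{R}_F(G),\cat{L}_F(H))$. Composing the two yields the desired isomorphism.

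For the second statement I would invoke the general principle that over a triangulated category equipped with a Serre functor, a functor admitting a left adjoint automatically admits a right adjoint obtained by conjugating with the Serre functor: since $\cat{R}_F\dashv\cat{L}_F$ by the previous step, the right adjoint of $\cat{L}_F$ is $S_X\circ\cat{R}_F\circ S_X^{-1}$ (this comes from a two-fold application of Serre duality combined with the adjunction just proved). It then remains to identify $S_X\circ\cat{R}_F\circ S_X^{-1}$ with $\cat{R}_{S_X(F)}$, where $S_X(F)$ is again exceptional since $S_X$ is an equivalence. I would do this by applying $S_X$ to the triangle defining $\cat{R}_F(S_X^{-1}(H))$ and rewriting the outer term via Serre duality, using $\RHom(S_X^{-1}(H),F)^\vee\cong\RHom(F,H)\cong\RHom(H,S_X(F))^\vee$; the resulting triangle is exactly the one defining $\cat{R}_{S_X(F)}(H)$, which gives the identification.

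The main obstacle is not the objectwise isomorphisms, which are immediate from the above, but upgrading them to natural transformations so that they genuinely witness adjunctions. This requires checking compatibility with morphisms in $G$ and $H$, which reduces to the functoriality of the evaluation and coevaluation maps in \eqref{eqn:LRmutation} and to the naturality of Serre duality; the uniqueness of the cone of a morphism between triangles then promotes the last identification to an isomorphism of functors $S_X\cat{R}_F S_X^{-1}\cong\cat{R}_{S_X(F)}$ rather than a merely pointwise one.
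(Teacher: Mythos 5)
Your proof is correct, but it takes a genuinely different route from the paper's. The paper's proof is purely formal: it uses the intrinsic description of the mutations recalled just before the lemma, $\cat{L}_F=\iota_{F^\perp}\circ\iota^*_{F^\perp}$ and $\cat{R}_F=\iota_{{}^\perp F}\circ\iota^!_{{}^\perp F}$, together with the single observation that ${}^\perp(S_X(F))=F^\perp$ (immediate from Serre duality), and then both adjunctions follow by composing the standard adjunctions $\iota^*_{\cat{D}}\dashv\iota_{\cat{D}}\dashv\iota^!_{\cat{D}}$ for $\cat{D}$ equal to $F^\perp$ or ${}^\perp F$. You instead argue directly from the defining triangles \eqref{eqn:LRmutation}: your middle-term argument identifying both $\Hom(\cat{R}_F(G),H)$ and $\Hom(G,\cat{L}_F(H))$ with $\Hom(\cat{R}_F(G),\cat{L}_F(H))$ is correct, and so is the derivation of the right adjoint of $\cat{L}_F$ as $S_X\circ\cat{R}_F\circ S_X^{-1}$ followed by its identification with $\cat{R}_{S_X(F)}$. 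The trade-off is this: the paper's route makes naturality automatic (everything is composed from honest adjoint pairs, which also settles the functoriality-of-cones issue you rightly flag) and generalizes verbatim to any admissible subcategory, as the paper notes parenthetically; your route is more elementary and self-contained, needing only the triangles and Serre duality, but must handle naturality by hand. Note also that your second step could be shortened to match your first: using the paper's key remark ${}^\perp(S_X(F))=F^\perp$, one proves $\cat{L}_F\dashv\cat{R}_{S_X(F)}$ by the same middle-term trick---apply $\Hom(\cat{L}_F(G),-)$ to the triangle defining $\cat{R}_{S_X(F)}(H)$, using $\Hom^\bullet(\cat{L}_F(G),S_X(F))\cong\Hom^\bullet(F,\cat{L}_F(G))^\vee=0$, and apply $\Hom(-,\cat{R}_{S_X(F)}(H))$ to the triangle defining $\cat{L}_F(G)$, using $\cat{R}_{S_X(F)}(H)\in{}^\perp(S_X(F))=F^\perp$---which avoids both the conjugation $S_X\circ\cat{R}_F\circ S_X^{-1}$ and the Serre-duality compatibility of the coevaluation maps, the latter being the most delicate and most glossed point in your write-up.
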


\begin{proof}
	This follows from the remark that ${}^{\perp}(S_X(F))=F^{\perp}$ and using adjunction between the functors $\iota^*_{\cat{D}}$, $\iota_{\cat{D}}$ and $\iota^!_{\cat{D}}$ for $\cat{D}$ equal to ${}^\perp F$ or to $F^\perp$.
\end{proof}

\subsection{ACM bundles on cubics}\label{subsec:ACM}

Let $Y$ be a \emph{smooth cubic $n$-fold}, namely a smooth projective hypersurface of degree $3$ in $\PP^{n+1}$.
We set $\O_Y(H):=\O_{\PP^{n+1}}(H)|_{Y}$.
According to Remark \ref{rmk:exceptional}, as observed by Kuznetsov, the derived category $\Db(Y)$ of coherent sheaves on $Y$ has a semiorthogonal decomposition
\begin{equation}\label{eqn:semiorthgen}
		\Db(Y)=\langle\cat{T}_Y,\O_Y,\O_Y(H),\ldots,\O_Y((n-2)H)\rangle,
\end{equation}
where, by definition,
\begin{equation*}
	\begin{split}
	\cat{T}_Y&:=\langle\O_Y,\ldots,\O_Y(n-2)\rangle^\perp\\
	&=\left\{G\in\Db(Y):\Hom^p_{\Db(Y)}(\O_Y(iH),G)=0,\text{ for all }p\text{ and }i=0,\ldots,n-2\right\}.
	\end{split}
\end{equation*}

Let us first recall the following definition.

\begin{defn}\label{def:ACMbalanced}
	(i) A vector bundle $F$ on a smooth projective variety $X$ of dimension $n$ is \emph{arithmetically Cohen-Macaulay} (ACM) if $\dim H^i(X,F(jH))=0$, for all $i=1,\ldots,n-1$ and all $j\in\ZZ$.
	
	(ii) An ACM bundle $F$ is called \emph{balanced} if $\mu(F)\in[-1,0)$.
\end{defn}

The following lemmas show that the category $\cat{T}_Y$ and stable ACM bundles are closely related.

\begin{lem}\label{lem:ACM}
	Let $Y\subset \PP^{n+1}$ be a smooth cubic $n$-fold.
	Let $F$ be a balanced $\mu$-stable ACM bundle with $\rk(F)>1$.
	Then $F\in \mathbf{T}_Y$.
\end{lem}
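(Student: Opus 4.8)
The plan is to show that $F$ belongs to $\cat{T}_Y = \langle \O_Y, \ldots, \O_Y((n-2)H)\rangle^\perp$, which by definition means verifying that $\Hom^p_{\Db(Y)}(\O_Y(iH), F) = 0$ for all $p \in \ZZ$ and all $i = 0, 1, \ldots, n-2$. Since $\Hom^p_{\Db(Y)}(\O_Y(iH), F) = H^p(Y, F(-iH))$, the task reduces entirely to a vanishing statement for the cohomology groups $H^p(Y, F(-iH))$ in the relevant range of twists. The ACM hypothesis is tailor-made for this: by Definition \ref{def:ACMbalanced}(i), the intermediate cohomology $H^p(Y, F(jH))$ vanishes for all $p = 1, \ldots, n-1$ and all $j$, so for these middle degrees there is nothing left to prove. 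The real content is therefore concentrated in the two extreme degrees $p = 0$ and $p = n$.

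First I would handle $H^0(Y, F(-iH))$ for $i = 0, \ldots, n-2$, i.e.\ $H^0$ of the twists $F, F(-H), \ldots, F(-(n-2)H)$. Here the balanced condition $\mu(F) \in [-1,0)$ together with $\mu$-stability is exactly what forces vanishing: any nonzero global section of $F(-iH)$ would give a nonzero map $\O_Y \to F(-iH)$, hence a subsheaf of $F(-iH)$ of slope $\geq 0$, i.e.\ a destabilizing or slope-nonnegative subobject. Since $\mu(F(-iH)) = \mu(F) - iH^{n-1} < 0$ for $i \geq 0$ (as $\mu(F) < 0$), $\mu$-stability of $F(-iH)$ prohibits a subsheaf of nonnegative slope of positive rank, and the rank-one image $\O_Y \to F(-iH)$ would have slope $0$. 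This rules out sections; I would be careful to phrase the slope comparison using that $\O_Y(H)$ is the polarization and that $\mu$-stability is preserved under twisting. The hypothesis $\rk(F) > 1$ presumably enters to exclude the degenerate case $F \cong \O_Y(-H)$ (a balanced line bundle that does lie outside $\cat{T}_Y$), so the statement genuinely needs higher rank.

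Next I would treat $H^n(Y, F(-iH))$ for $i = 0, \ldots, n-2$ by Serre duality: $H^n(Y, F(-iH)) \cong H^0(Y, F^\vee(iH) \otimes \omega_Y)^\vee$. The canonical bundle of the cubic $n$-fold is $\omega_Y \cong \O_Y(-(n-1)H)$ by adjunction, so this becomes $H^0(Y, F^\vee((i-n+1)H))^\vee$. For $i \leq n-2$ the twist exponent $i - n + 1 \leq -1$ is negative. Now $F^\vee$ is again a $\mu$-stable ACM bundle (duals of ACM bundles are ACM, and $\mu$-stability is self-dual), with $\mu(F^\vee) = -\mu(F) \in (0, 1]$, so $\mu(F^\vee((i-n+1)H)) = -\mu(F) + (i-n+1)H^{n-1} < 0$ whenever $i - n + 1 < -1$, and the boundary case $i = n-2$ giving exponent $-1$ must be checked using $\mu(F) \in (-1,0]$ strictly, again via the balanced bound. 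The same slope argument as before then kills these $H^0$ groups, and hence the $H^n$ groups vanish.

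The main obstacle I anticipate is the careful bookkeeping at the \emph{boundary twists}, namely $i = 0$ for the $H^0$ computation and $i = n-2$ for the $H^n$ computation, where the relevant slope sits exactly at the endpoint of the interval $[-1,0)$. The half-open nature of the balanced condition is what makes these boundary cases work: the strict inequality $\mu(F) < 0$ secures the $H^0$ vanishing at $i=0$, while the inequality $\mu(F) \geq -1$ (and the need to keep it off the wrong endpoint) secures the dual $H^0$ vanishing at $i = n-2$. I would state once and for all that twisting preserves $\mu$-stability and that a $\mu$-stable bundle of negative slope has no global sections (no nonzero map from $\O_Y$), and then apply this uniformly to $F(-iH)$ and to $F^\vee((i-n+1)H)$, leaving the purely numerical slope inequalities as routine verifications.
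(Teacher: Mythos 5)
Your proposal is correct and follows essentially the same route as the paper: the ACM hypothesis disposes of the middle cohomology, $\mu$-(semi)stability with $\mu(F)<0$ kills $H^0$ of the twists $F(-iH)$, and Serre duality together with stability and $\rk(F)>1$ kills $H^n$, with the boundary twist $i=n-2$ handled by strict stability of a rank $>1$ sheaf. The only cosmetic difference is that you phrase the $H^n$ vanishing via sections of $F^\vee((i-n+1)H)$ rather than via $\Hom(F,\O_Y((-n+1+i)H))$ as the paper does; note also a small slip in your write-up: the balanced condition is $\mu(F)\in[-1,0)$, not $(-1,0]$, so $\mu(F)=-1$ is allowed, and that is precisely the case where strict $\mu$-stability and $\rk(F)>1$ (rather than mere semistability) are indispensable at the boundary twist.
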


\begin{proof}
	We want to show that $h^i(Y,F(-jH))=0$ for all $i\in \ZZ$ and $j\in \set{0,\ldots, n-2}$.
	Since $F$ is ACM, we already have that $h^i(Y,F(-jH))=0$ for $i\in\set{1,\ldots,n-1}$ and any $j$.
	Hence, we only need to prove that $h^0(Y,F(-jH))=h^n(Y,F(-jH))=0$ for $j\in \set{0,\ldots, n-2}$.
	But, on the one hand, we have
	\[
	h^0(Y,F(-jH))=\mathrm{hom}(\O_Y(jH),F)=0,
	\]
	for $j\geq 0$, since $F$ is $\mu$-semistable with $\mu(F)<0$.
	On the other hand,
	\[
	h^n(Y,F(-jH))=\mathrm{ext}^n(\O_Y(jH),F)=\mathrm{hom}(F,\O_Y((-n+1+j)H))= 0,
	\]
	for $-n+1+j< -1$, because $F$ is $\mu$-semistable with $-1\leq\mu(F)$.
	It remains, to prove that the vector space $\Hom(F,\O_Y((-n+1+j)H))$ is trivial for $j=n-2$.
	But this is immediate, since $F$ is a $\mu$-stable sheaf of rank greater than $1$.
\end{proof}

\begin{rem}
	The previous lemma is slightly more general.
	Indeed, the same proof works for a balanced ACM bundle of rank greater than one, if it is $\mu$-semistable and $\Hom(F,\O_Y(-H))=0$.
\end{rem}

\begin{rem}\label{rem:sympl}
When $n=4$, the Serre functor of the subcategory $\cat{T}_Y$ is isomorphic to the shift by $2$ (see \cite[Thm.~4.3]{Kuz:4fold}).
Thus, as an application of the result above and \cite[Thm.~4.3]{KM}, one gets that the smooth locus of any moduli space of $\mu$-stable ACM vector bundles on $Y$ carries a closed symplectic form.
\end{rem}

\begin{lem}\label{lem:viceversa}
	Let $Y\subset \PP^{n+1}$ be a smooth cubic $n$-fold and let $F\in \coh(Y) \cap \cat{T}_Y$.
	Assume
	\begin{equation}\label{eqn:vanishing1}
		\begin{split}
		& H^1(Y,F(H))=0\\
		& H^1(Y,F((1-n)H))=\ldots=H^{n-1}(Y,F((1-n)H))=0.
		\end{split}
	\end{equation}
	Then $F$ is an ACM bundle.
\end{lem}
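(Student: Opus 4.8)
The plan is to induct on $n$, restricting to a general hyperplane section. Recall first that, by adjunction, $\omega_Y\cong\O_Y((1-n)H)$, so Serre duality on $Y$ reads $H^i(Y,G(jH))^\vee\cong\Ext^{n-i}(G,\O_Y((1-n-j)H))$. Moreover, unwinding the definition, $F\in\cat T_Y$ is equivalent to the ``window'' vanishing $H^p(Y,F(jH))=0$ for all $p$ and all $j\in\{-(n-2),\dots,0\}$; combined with \eqref{eqn:vanishing1} this already gives intermediate vanishing $H^i(Y,F(jH))=0$ for $1\le i\le n-1$ on the $n$ consecutive twists $j\in\{-(n-1),\dots,0\}$, together with $H^1(Y,F(H))=0$. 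The base case $n=2$, where $i=1$ is the only intermediate degree, would be treated directly via plane cubic curve sections; so assume $n\ge3$.

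Let $Y_1:=Y\cap\PP^n$ be a general hyperplane section, a smooth cubic $(n-1)$-fold, and set $F_1:=F|_{Y_1}$. For general $Y_1$ its equation is a non-zero-divisor on $F$, so $0\to F((j-1)H)\to F(jH)\to F_1(jH)\to0$ is exact for every $j$. Chasing the associated long exact sequences against the window and \eqref{eqn:vanishing1} vanishings, I would check that $F_1\in\coh(Y_1)\cap\cat T_{Y_1}$ and that $F_1$ satisfies the analogue of \eqref{eqn:vanishing1} on $Y_1$, namely $H^1(Y_1,F_1(H))=0$ and $H^i(Y_1,F_1((2-n)H))=0$ for $1\le i\le n-2$. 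By the inductive hypothesis, $F_1$ is then an ACM bundle on $Y_1$.

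The decisive point is to extract from $F_1$ two boundary vanishings. Since $F_1$ is locally free and lies in $\cat T_{Y_1}$, Serre duality on $Y_1$ converts its window vanishing into that of $F_1^\vee(-H)$, i.e. $F_1^\vee(-H)\in\cat T_{Y_1}$; multiplying sections by powers of a linear form then yields $H^0(Y_1,F_1^\vee(kH))=0$ for all $k\le-1$, equivalently (by duality on $Y_1$) $H^{n-1}(Y_1,F_1(jH))=0$ for all $j\ge1$. Similarly, and more directly, $H^0(Y_1,F_1(jH))=0$ for all $j\le0$. With these in hand I would propagate the intermediate vanishing of $F$ from the window to all twists by induction on $j$, upward and downward, using the restriction sequences: the middle degrees $2\le i\le n-2$ close from both sides since $H^i(Y_1,F_1(jH))=0$ there, whereas the two boundary degrees are controlled precisely by $H^0(Y_1,F_1(jH))=0$ for $j\le0$ (degree $i=1$, going down) and by $H^{n-1}(Y_1,F_1(jH))=0$ for $j\ge1$ (degree $i=n-1$, going up). This gives $H^i(Y,F(jH))=0$ for all $1\le i\le n-1$ and all $j\in\ZZ$.

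It then remains to show $F$ is locally free. First $F$ is torsion-free: a zero-dimensional torsion subsheaf would survive in $H^0(Y,F)=0$, while positive-dimensional torsion would restrict to nonzero torsion in the bundle $F_1$, a contradiction. Being torsion-free on the smooth $Y$ with locally free restriction $F_1$ along a general $Y_1$ that is a non-zero-divisor, $F$ is locally free by the local criterion of flatness; alternatively, the intermediate vanishing just proved shows that $\bigoplus_jH^0(Y,F(jH))$ is a maximal Cohen--Macaulay module over the homogeneous coordinate ring of $Y$, whose sheafification is locally free because $Y$ is smooth. I expect the genuine obstacle to be the top boundary degree $i=n-1$ in positive twists: the naive hyperplane-section induction stalls there, and overcoming this is exactly what forces Serre duality and the observation $F_1^\vee(-H)\in\cat T_{Y_1}$; the other delicate point is ruling out torsion and upgrading reflexivity to honest local freeness.
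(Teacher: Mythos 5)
You take a genuinely different route from the paper. The paper never leaves $Y$: it runs the Beilinson spectral sequence for the embedding $Y\hookrightarrow\PP^{n+1}$, first for the twist $F(H)$ --- where membership in $\cat{T}_Y$ kills the columns $p=-n+1,\ldots,-1$ and the hypotheses \eqref{eqn:vanishing1} kill the remaining relevant entries --- obtaining intermediate vanishing on a window of twists together with $H^0(Y,F(jH))=0$ for $j\le 0$ and $H^n(Y,F(jH))=0$ for $j\ge -n+2$; it then propagates these to all $j\in\ZZ$ by induction on the twist, and finally proves local freeness by showing $\cExt^q(F,\O_Y)=0$ for $q>0$ via the local-to-global Ext spectral sequence and Serre duality. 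You instead induct on $n$ through a general hyperplane section $Y_1$: your verifications that $F_1=F|_{Y_1}$ lies in $\cat{T}_{Y_1}$ and satisfies the analogue of \eqref{eqn:vanishing1}, that Serre duality gives $F_1^\vee(-H)\in\cat{T}_{Y_1}$ and hence the boundary vanishings $H^0(Y_1,F_1(jH))=0$ for $j\le 0$ and $H^{n-1}(Y_1,F_1(jH))=0$ for $j\ge 1$, and the resulting two-sided propagation along the restriction sequences are all correct. The paper's method buys uniformity --- one argument for every $n\ge 2$, no base case, no need to establish torsion-freeness first; yours is more elementary (only long exact sequences and Serre duality) and enacts the paper's dimension-reduction philosophy at the level of this very lemma.

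Two steps, however, need repair. First, the base case $n=2$ is not a formality and cannot be pushed lower: for $n=1$ the window of $\cat{T}_Y$-vanishings is empty and the statement fails (a skyscraper sheaf on a cubic curve satisfies the surviving hypothesis $H^1(Y,F(H))=0$ but is not a bundle), so cubic surfaces must be handled outright; this can in fact be done by your own method --- restrict to a plane cubic $E\in\abs{H}$, use $H^0(E,F|_E)=0$ to kill torsion on the curve and Serre duality with $\omega_E\cong\O_E$ for the boundary vanishing --- but as written it is only asserted. Second, the claim that torsion-freeness plus local freeness of $F|_{Y_1}$ for general $Y_1$ gives local freeness of $F$ ``by the local criterion of flatness'' is false: the ideal sheaf $\I_p\subset\O_Y$ of a point is torsion-free and restricts to $\O_{Y_1}$ on every hyperplane section missing $p$, yet it is not locally free --- a general $Y_1$ simply never meets a zero-dimensional non-locally-free locus. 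Your alternative argument is the one that works: torsion-freeness makes $\bigoplus_j H^0(Y,F(jH))$ finitely generated, the vanishings already proved make it a maximal Cohen--Macaulay module over the homogeneous coordinate ring of $Y$, and since the affine cone over $Y$ is regular away from the vertex, every localization at a non-irrelevant prime is free by Auslander--Buchsbaum, whence $F$ is locally free. (This is exactly the MCM/ACM dictionary the paper alludes to in the introduction; the paper's own proof avoids it via the Ext spectral sequence.) With these two repairs your proof is complete.
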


\begin{proof}
	We start by proving that $H^i(Y,F(jH))=0$, for all $i=1,\ldots,n-1$ and all $j\in\ZZ$.
	Denote by $i:Y\hookrightarrow\PP^{n+1}$ the embedding of $Y$.
	For $m\in\ZZ$, we recall the Beilinson spectral sequence from \cite[Prop.\ 8.28]{huy}:
	\begin{equation*}
		E_1^{p,q}:=H^q(\PP^{n+1}, i_*F(p+m))\otimes \Omega_{\PP^{n+1}}^{-p}(-p)\Rightarrow E^{p+q}=\begin{cases} F(m) &\text{if }p+q=0\\ 0&\text{otherwise.}\end{cases}
	\end{equation*}

	We first consider the case when $m=1$.
	Since $F\in\cat{T}_Y$, we have $E_1^{p,q}=0$, for $p=-n+1,\ldots,-1$ and all $q$.
	By assumption, $E_1^{p,q}=0$, also for $p=-n$ and $q=1,\ldots,n-1$, and $E_1^{0,1}=0$.
	As a consequence, all the differentials $d_r^{-n-1,q}=0$, for all $q=1,\ldots,n-1$ and all $r>0$.
	Hence, $E_1^{-n-1,q}=0$, for all $q=1,\ldots,n-1$.
	A similar argument shows that $E_1^{0,q}=0$, for all $q=1,\ldots,n$.
	Summing up, we have the following vanishing:
	\begin{align}
		& H^i(Y,F(jH))=0, \qquad \text{ for all } i=1,\ldots,n-1 \text{ and all } j=-n,\ldots,0,1\label{eqn:middle}\\
		& H^0(Y,F(jH))=0, \qquad \text{ for all } j\leq0\label{eqn:bottom}\\
		& H^n(Y,F(jH))=0, \qquad \text{ for all } j\geq-n+2.\label{eqn:top}
	\end{align}

	Now, on the one hand, by using the Beilinson spectral sequence for $m>1$ and the vanishing \eqref{eqn:middle} and \eqref{eqn:bottom}, we can prove by induction on $m$ that
	\begin{equation}\label{eqn:left}
		H^i(Y,F(jH))=0, \qquad \text{ for all } i=1,\ldots,n-1 \text{ and all } j\leq0.
	\end{equation}
	On the other hand, for $m<1$, the vanishing \eqref{eqn:middle} and \eqref{eqn:top} show
	\[
	H^i(Y,F(jH))=0, \qquad \text{ for all } i=1,\ldots,n-1 \text{ and all } j>0.
	\]

	To finish the proof of the lemma, we only need to show that $F$ is locally-free.
	Hence, it is enough to prove that $\cExt^i(F,\O_Y)=0$, for all $i>0$.
	For $k\in\ZZ$, consider the local-to-global spectral sequence
	\begin{equation}\label{eqn:splocglob}
		E_2^{p,q}(k)= H^p(Y,\cExt^q(F(k),\O_Y))\Rightarrow \Ext^{p+q}(F(k),\O_Y).
	\end{equation}
	Assume, for a contradiction, that $\cExt^i(F,\O_Y)\neq 0$ for some $i>0$.
	Then, for $k\ll 0$, $E_2^{0,i}(k)\neq 0$, while $E_2^{p,i}(k)= 0$ for $p>0$.
	From the spectral sequence and Serre duality, we deduce that $H^{n-i}(Y,F((k-n+1)H))\neq0$, for $i=1,\ldots,n$ and for $k\ll 0$, contradicting \eqref{eqn:left} and \eqref{eqn:bottom}.
\end{proof}

Finally, for later use, we recall how to construct autoequivalences of $\cat{T}_Y$ (not fixing $\coh(Y)\cap\cat{T}_Y$).

\begin{lem}\label{lem:autoequiv}
	Let $Y\subset \PP^{n+1}$ be a smooth cubic $n$-fold.
	Then, the functor
	\[
	\Theta:\cat{T}_Y\to\cat{T}_Y, \qquad F\mapsto \cat{L}_{\O_Y}\left(F\otimes\O_Y(H) \right)
	\]
	is an autoequivalence of $\cat{T}_Y$.
\end{lem}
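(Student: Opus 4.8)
The plan is to factor $\Theta$ as $\cat{L}_{\O_Y}\circ T_H$, where $T_H:=(-)\otimes\O_Y(H)$ is an autoequivalence of $\Db(Y)$, and to produce a quasi-inverse from $T_{-H}:=(-)\otimes\O_Y(-H)$ and the right mutation $\cat{R}_{\O_Y}$. The structural input I would use is that, by the mutation rules recalled above and Lemma~\ref{lem:adjmutations}, $\cat{L}_{\O_Y}$ and $\cat{R}_{\O_Y}$ restrict to mutually inverse equivalences between the admissible subcategories ${}^\perp\O_Y$ and $\O_Y^\perp$ of $\Db(Y)$. (Here I write $\Hom^{\bullet}(X,Y)=0$ as shorthand for ``$\Hom^p(X,Y)=0$ for all $p$''.) Granting this, and since $\cat{T}_Y\subseteq\O_Y^\perp$ is immediate from the definition of $\cat{T}_Y$, it is enough to establish the three containments
\[
T_H(\cat{T}_Y)\subseteq{}^\perp\O_Y,\qquad \cat{L}_{\O_Y}\big(T_H(\cat{T}_Y)\big)\subseteq\cat{T}_Y,\qquad T_{-H}\big(\cat{R}_{\O_Y}(\cat{T}_Y)\big)\subseteq\cat{T}_Y.
\]
The first two say that $\Theta$ maps $\cat{T}_Y$ into itself; the third says the same for $\Psi:=T_{-H}\circ\cat{R}_{\O_Y}$; and then $\Psi\circ\Theta=\id$ and $\Theta\circ\Psi=\id$ on $\cat{T}_Y$, because $\cat{R}_{\O_Y}$ and $\cat{L}_{\O_Y}$ are inverse on ${}^\perp\O_Y$ and $\O_Y^\perp$ while $T_{\pm H}$ are inverse on $\Db(Y)$. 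Hence $\Theta$ is an equivalence.

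Everything reduces to cohomology vanishings that I would settle by Serre duality, using $\omega_Y=\O_Y((1-n)H)$, i.e.\ $S_Y=(-)\otimes\O_Y((1-n)H)[n]$. For the first containment, for $F\in\cat{T}_Y$ I would compute
\[
\Hom^p(F(H),\O_Y)\cong\Hom^{n-p}\big(\O_Y,F(H)\otimes\omega_Y\big)^\vee=\Hom^{n-p}(\O_Y((n-2)H),F)^\vee=0,
\]
the last vanishing holding because $F\in\cat{T}_Y$ and $n-2\in\{0,\dots,n-2\}$; thus $F(H)\in{}^\perp\O_Y$.

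For the second containment, $\cat{L}_{\O_Y}(F(H))$ lies in $\O_Y^\perp$ by construction, so I only need $\Hom^{\bullet}(\O_Y(iH),\cat{L}_{\O_Y}(F(H)))=0$ for $i=1,\dots,n-2$. Applying $\Hom^{\bullet}(\O_Y(iH),-)$ to the defining triangle
\[
\RHom(\O_Y,F(H))\otimes\O_Y\to F(H)\to\cat{L}_{\O_Y}(F(H)),
\]
this follows from $\Hom^{\bullet}(\O_Y(iH),F(H))=0$ for $i=1,\dots,n-1$ (as $F(H)\in\cat{T}_Y(H)=\langle\O_Y(H),\dots,\O_Y((n-1)H)\rangle^\perp$) together with $\Hom^{\bullet}(\O_Y(iH),\O_Y)=H^{\bullet}(Y,\O_Y(-iH))=0$ for $i=1,\dots,n-2$. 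This auxiliary vanishing I would deduce from the sequence $0\to\O_{\PP^{n+1}}(-i-3)\to\O_{\PP^{n+1}}(-i)\to\O_Y(-iH)\to0$ and the standard cohomology of line bundles on $\PP^{n+1}$.

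The third containment is where the one genuine difficulty lies. Writing $R:=\cat{R}_{\O_Y}(G)$ for $G\in\cat{T}_Y$, I must show $\Hom^{\bullet}(\O_Y(kH),R)=0$ for $k=1,\dots,n-1$ (equivalently $R(-H)\in\cat{T}_Y$). For $k=1,\dots,n-2$ the same two inputs as before, applied to the triangle $R\to G\to\RHom(G,\O_Y)^\vee\otimes\O_Y$, suffice. The boundary twist $k=n-1$ is the main obstacle: here $H^{\bullet}(Y,\O_Y(-(n-1)H))\neq0$ (it carries an $H^n\cong\CC$) and $\Hom^{\bullet}(\O_Y((n-1)H),G)$ need not vanish, since $n-1$ falls outside the range defining $\cat{T}_Y$, so the triangle alone is not enough. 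I would instead bypass it and use Serre duality together with $R\in{}^\perp\O_Y$:
\[
\Hom^p(\O_Y((n-1)H),R)\cong\Hom^{n-p}\big(R,\O_Y((n-1)H)\otimes\omega_Y\big)^\vee=\Hom^{n-p}(R,\O_Y)^\vee=0,
\]
the final equality being precisely the defining property of ${}^\perp\O_Y$. This is the crux: the forward functor required no boundary analysis (its orthogonality already extended up to $\O_Y((n-1)H)$), whereas for the inverse the edge of the exceptional collection is tamed not by a cohomology computation but by the fact that $\cat{R}_{\O_Y}$ lands in ${}^\perp\O_Y$.
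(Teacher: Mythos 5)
Your proposal is correct and takes essentially the same approach as the paper: the paper's one-line proof likewise declares that $\cat{L}_{\O_Y}\left(F\otimes\O_Y(H)\right)$ ``clearly'' lies in $\cat{T}_Y$ and exhibits the inverse as $\Theta^{-1}(-):=\O_Y(-H)\otimes\cat{R}_{\O_Y}(-)$, exactly your $\Psi$. Your Serre-duality computations --- in particular the treatment of the boundary twist $k=n-1$ via $\cat{R}_{\O_Y}(G)\in{}^{\perp}\O_Y$ --- simply supply the verifications the paper leaves implicit.
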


\begin{proof}
	Clearly $\cat{L}_{\O_Y}\left(F\otimes\O_Y(H) \right)$ belongs to $\cat{T}_Y$ and the inverse of $\Theta$ is given by the exact functor $\Theta^{-1}(-):=\O_Y(-H)\otimes\cat{R}_{\O_Y}(-)$.
\end{proof}

Let us revise a classical example under a slightly different perspective.

\begin{ex}\label{ex:BBR}
	Denote by $\MMM_Y(v)$ the moduli space of $\mu$-stable torsion-free sheaves $E$ on $Y$ with $v:=\ch(E)=(2,-H,-l/2,1/2)$, and let $F(Y)$ be the Fano surface of lines contained in a cubic threefold $Y$.
	In \cite[Thm. 1]{BBR} it is proven that there exists a connected component $\MMM_Y'(v)\subset \MMM_Y(v)$ consisting of $\mu$-stable ACM bundles such that $\MMM_Y'(v)\cong F(Y)$.

	This can be obtained by using a slightly different approach.
	First of all, observe that the ideal sheaves of lines $\I_l\in\cat{T}_Y$, for all $l\subset Y$ and $F(Y)$ is the moduli space of these sheaves.
	By applying $\Theta[-1]$ (see Lemma \ref{lem:autoequiv}), we get an exact sequence in $\coh(Y)$
	\begin{equation}\label{eqn:Fl}
		0\to F_l:=\Theta(\I_l)[-1] \to \O_Y\otimes_{\CC}H^0(Y,\I_l(H))\to \I_l(H)\to 0.
	\end{equation}
	In particular, all $F_l$ are torsion-free sheaves with Chern character $v$.
	By \eqref{eqn:Fl} and Lemma \ref{lem:viceversa}, we deduce that $F_l$ are all ACM bundles.
	Since they belong to $\cat{T}_Y$, we have $H^0(Y,F_l)=0$ and as they have rank $2$, this shows that they are $\mu$-stable.
	By construction, the Fano variety of lines is then a connected component of $\MMM_Y(v)$.

	By \cite[Lemma 1]{BBR}, the connected component $\MMM_Y'(v)$ can also be characterized as the component of $\MMM_Y(v)$ consisting of $\mu$-stable torsion-free sheaves $G$ satisfying $H^0(Y,G(H))\neq0$ and $H^0(Y,G)=0$.
	Also, by Lemma \ref{lem:ACM} and by \cite[Thm.~4.1 \& Prop.~4.2]{BMMS}, all balanced $\mu$-stable ACM bundles $G$ with $\Ext^1(G,G)\cong\CC^2$ are in $\MMM_Y'(v)$.
\end{ex}

\begin{rem}\label{rmk:Serreconstr}
	Let us remark that the original proof in \cite{BBR} of the result in Example \ref{ex:BBR} relies on the so called \emph{Serre's construction} which we briefly recall it in a more general form (e.g.~\cite{arr}).
	Let $X$ be a smooth projective manifold of dimension at least $3$ and let $E$ be a rank $r$ vector bundle on $X$ which is spanned by its global sections.
	The dependency locus of $r-1$ general sections $s_1,\ldots,s_{r-1}$ of $E$ is a locally complete intersection subscheme $V$ of codimension $2$ in $X$.
	If $L=\det(E)$, then the twisted canonical bundle $K_V\otimes L^{-1}$ is generated by $r-1$ sections.

	Conversely, let $V$ a codimension $2$ locally complete intersection subscheme of $X$ and let $L$ be a line bundle on $X$ such that $H^2(X,L^{-1})=0$.
	If $K_V\otimes L^{-1}$ is generated by $r - 1$ global sections, then $V$ can be obtained as the dependency locus of $r-1$ sections of $E$.
	
	This construction is pervasive in the literature and it has been extensively used in various works to produce examples of stable ACM bundles.
\end{rem}

\subsection{Quadric fibrations}\label{subsec:quadric}

The results of \cite{Kuz:Quadric} on the structure of the derived category of coherent sheaves on a fibration in quadrics will be the basic tools to study the derived category of cubic threefolds.
We briefly summarize them here.

Consider a smooth algebraic variety $S$ and a vector bundle $E$ of rank $n$ on $S$.
We consider the projectivization $q:\PP_S(E)\to S$ of $E$ on $S$ endowed with the line bundle $\O_{\PP_S(E)/S}(1)$.
Given a line bundle $L$ on $S$ and an inclusion of vector bundles $\sigma:L\to\mathrm{Sym}^2 E^\vee$, we denote by $\alpha:X\hookrightarrow\PP_S(E)$ the zero locus of $\sigma$ and by $\pi:X\to S$ the restriction of $q$ to $X$.
It is not difficult to prove that $\pi$ is a flat quadric fibration of relative dimension $n-2$.
The geometric picture can be summarized by the following diagram
\[
\xymatrix{
X \ar@{^{(}->}[rr]^-{\alpha}\ar[rrd]_-{\pi}&& \PP_{S}(E)\ar[d]_-{q}\\
&& S.
}
\]

The quadric fibration $\pi:X\to S$ carries a \emph{sheaf $\B_\sigma$ of Clifford algebras}.
In fact, $\B_\sigma$ is the relative sheafified version of the classical Clifford algebra associated to a quadric on a vector space (more details can be found in \cite[Sect.\ 3]{Kuz:Quadric}).
As in the absolute case, $\B_\sigma$ has an \emph{even part} $\B_0$ whose description as an $\O_S$-module is as follows
\[
\B_0\cong\O_S\oplus(\wedge^2 E\otimes L)\oplus(\wedge^4 E\otimes L^2)\oplus\ldots
\]
The \emph{odd part} $\B_1$ of $\B_\sigma$ is such that
\[
\B_1\cong E\oplus(\wedge^3 E\otimes L)\oplus(\wedge^5 E\otimes L^2)\oplus\ldots
\]
We also denote $\B_{2i}=\B_0\otimes L^{-i}$ and $\B_{2i+1}=\B_1\otimes L^{-i}$.

We write $\coh(S,\B_0)$ for the abelian category of coherent $\B_0$-modules on $S$ and $\Db(S,\B_0)$ for its derived category.

\begin{thm}[{\cite[Thm.~4.2]{Kuz:Quadric}}]\label{thm:KuzQuadric}
	If $\pi:X\to S$ is a quadric fibration as above, then there exists a
	semiorthogonal decomposition
	\[
	 \Db(X)=\langle\Db(S,\B_0),\pi^*(\Db(S))\otimes\O_{X/S}(1),\pi^*(\Db(S))\otimes\O_{X/S}(2),\ldots,\pi^*(\Db(S))\otimes\O_{X/S}(n-2)\rangle,
	\]
	where $\Db(S,\B_0)$ is the derived category of coherent sheaves of $\B_0$-modules on $S$.
\end{thm}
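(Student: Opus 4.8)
The plan is to follow the two classical steps for producing a semiorthogonal decomposition: first verify that the listed subcategories are semiorthogonal and admissible, and then show that together they generate $\Db(X)$. The guiding principle throughout is to reduce statements to the fibers of $\pi$, where $X$ specializes to a single (possibly degenerate) quadric, and to exploit the relative Koszul resolution coming from the embedding $\alpha:X\hookrightarrow\PP_S(E)$ as a divisor of relative degree $2$.

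First I would record the cohomological input. Since $X$ is the zero locus of $\sigma$, it lies in $|\O_{\PP_S(E)/S}(2)\otimes q^*L^{-1}|$, so there is a relative Koszul complex
\[
0\to\O_{\PP_S(E)/S}(-2)\otimes q^*L\to\O_{\PP_S(E)}\to\alpha_*\O_X\to0.
\]
Pushing this forward along $q$ and using $q_*\O_{\PP_S(E)/S}(m)=\mathrm{Sym}^m E^\vee$ for $m\geq0$ computes $\pi_*\O_{X/S}(t)$ for all $t$; in particular one gets the relative vanishing $\pi_*\O_{X/S}(-t)=0$ for $1\leq t\leq n-2$, while the even and odd pushforwards carry exactly the module data encoded by $\B_0$ and $\B_1$. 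These vanishings immediately give the semiorthogonality among the blocks $\pi^*\Db(S)\otimes\O_{X/S}(i)$: by the projection formula and adjunction one has $\RHom_X(\pi^*A\otimes\O_{X/S}(i),\pi^*B\otimes\O_{X/S}(j))=\RHom_S(A,B\otimes\pi_*\O_{X/S}(j-i))$, which vanishes for $0<i-j\leq n-2$. Each such block is equivalent to $\Db(S)$ via $\pi^*(-)\otimes\O_{X/S}(i)$, because $\pi_*\O_X=\O_S$.

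The heart of the argument is the embedding of $\Db(S,\B_0)$ as the left-orthogonal complement of the line-bundle blocks. Here I would construct a Fourier--Mukai-type functor $\Phi:\Db(S,\B_0)\to\Db(X)$, $\Phi(\F)=\E\otimes^{\dL}_{\pi^*\B_0}\pi^*\F$, where $\E$ is the natural sheaf of right $\pi^*\B_0$-modules on $X$ playing the role of a \emph{relative spinor object}: its restriction to a smooth fiber is the spinor bundle of that quadric, and $\B_0$ is designed precisely so that this object extends uniformly across the degenerate fibers. One then checks that $\Phi$ lands in $\langle\pi^*\Db(S)\otimes\O_{X/S}(1),\ldots,\pi^*\Db(S)\otimes\O_{X/S}(n-2)\rangle^{\perp}$ (again a fiberwise computation of the relevant $\pi_*(\E^\vee\otimes\O_{X/S}(i))$) and that $\Phi$ is fully faithful, for instance by verifying the Bondal--Orlov criterion after base change to points of $S$, where it reduces to the classical fact that modules over the even Clifford algebra of a smooth quadric correspond to its spinor bundles.

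Finally I would prove generation by globalizing Kapranov's resolution of the diagonal on a quadric. Concretely, one builds a resolution of the structure sheaf of the relative diagonal in $X\times_S X$ whose terms are external products of the twists $\O_{X/S}(i)$ with $i=1,\ldots,n-2$ and of the spinor object $\E$; convolving an arbitrary $G\in\Db(X)$ against this resolution exhibits $G$ as built from the proposed subcategories, so they generate. The main obstacle is precisely this last step together with the construction of $\E$: over the locus in $S$ where the quadric fibration degenerates, the spinor bundle does not exist as a sheaf, and the entire reason for passing to $\B_0$-modules is to repair this. Thus the technical crux is to show that the relative resolution of the diagonal behaves well over the degeneration locus, i.e.\ that the $\B_0$-linear structure furnishes the correct replacement for the fiberwise spinor bundle uniformly over $S$; this is where the flatness of $\pi$ and the explicit $\O_S$-module descriptions of $\B_0$ and $\B_1$ do the essential work.
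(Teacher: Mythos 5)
This statement is not proved in the paper at all: it is quoted from Kuznetsov \cite[Thm.~4.2]{Kuz:Quadric}, and the surrounding text of Section \ref{subsec:quadric} only recalls the kernel sheaves $\E'$, $\E$ and the functors $\Phi$, $\Psi$ realizing the embedding. So the benchmark is Kuznetsov's original argument, and measured against it your sketch has a genuine gap at each of the two points you yourself flag as delicate. For full faithfulness of $\Phi$ you propose to ``verify the Bondal--Orlov criterion after base change to points of $S$.'' That criterion does not apply here: the source category is $\Db(S,\B_0)$, modules over a sheaf of noncommutative algebras, which has no supply of point-like objects to test, and full faithfulness of an $S$-linear functor is not a condition one can check fiber by fiber without a substantial base-change theory for semiorthogonal decompositions --- machinery that is not available in, and is not used by, \cite{Kuz:Quadric}. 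Kuznetsov instead proves $\Psi\circ\Phi\cong\id$ by a direct computation, using the explicit resolutions \eqref{eqn:defE'} and \eqref{eqn:defE} of $\alpha_*\E'$ and $\alpha_*\E$ together with the pushforward formulas for $q_*\O_{\PP_S(E)/S}(m)$; no reduction to fibers is involved.

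The second and more serious gap is generation. You build this step on a relative Kapranov resolution of the diagonal in $X\times_S X$ and then concede that its behaviour over the degeneration locus is ``the technical crux.'' But that crux is the entire content of the theorem: Kapranov's resolution is precisely the tool that breaks on singular quadrics, so the proposal defers the key difficulty rather than resolving it. It is also an unnecessarily hard route. Since $\alpha:X\hookrightarrow\PP_S(E)$ is a closed embedding, $\alpha_*$ is conservative; by adjunction, any $G\in\Db(X)$ right-orthogonal to $\alpha^*\Db(\PP_S(E))$ satisfies $\alpha_*G=0$, hence $G=0$. Combined with Orlov's projective bundle decomposition of $\Db(\PP_S(E))$, this shows at once that the twists $\pi^*\Db(S)\otimes\O_{X/S}(i)$, $i=0,\ldots,n-1$, generate $\Db(X)$; the extreme twists are then absorbed into the image of $\Phi$ by a d\'evissage using \eqref{eqn:defE'} restricted to $X$, which is how the orthogonal complement of the line-bundle blocks gets identified with $\Db(S,\B_0)$ without ever writing a resolution of the diagonal. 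A smaller inaccuracy in the same spirit: the vanishing actually needed for semiorthogonality is $R\pi_*\O_{X/S}(-t)=0$ for $1\leq t\leq n-3$; your claimed range $1\leq t\leq n-2$ fails at the endpoint (already for a conic fibration one has $R^1\pi_*\O_{X/S}(-1)\neq0$), although only the smaller range is used.
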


In order to make this result precise, we need to give the definition of the fully faithful functor $\Db(S,\B_0)\to\Db(X)$ providing the embedding in the above semiorthogonal decomposition.
The exact functor $\Phi:=\Phi_{\E'}:\Db(S,\B_0)\to\Db(X)$ is defined as the Fourier--Mukai transform
\begin{equation*}\label{eqn:defemb}
	\Phi_{\E'}(-):=\pi^*(-)\otimes_{\pi^*\B_0}\E',
\end{equation*}
where $\E'\in\coh(X)$ is a rank $2^{n-2}$ vector bundle on $X$ with a natural structure of flat left $\pi^*\B_0$-module defined by the short exact sequence
\begin{equation}\label{eqn:defE'}
	0\longrightarrow q^*\B_0\otimes\O_{\PP_S(E)/S}(-2)\longrightarrow q^*\B_1\otimes\O_{\PP_S(E)/S}(-1)\longrightarrow\alpha_*\E'\longrightarrow
0.
\end{equation}
In the notation of \cite[Lemma 4.5]{Kuz:Quadric}, $\E'=\E'_{-1,1}$.
The left adjoint functor of $\Phi$ is
\begin{equation}\label{eqn:bascoh}
	\Psi(-):=\pi_*((-)\otimes_{\O_X}\E\otimes_{\O_X}\det E^\vee[n-2]),\\
\end{equation}
where $\E\in\coh(X)$ is another rank $2^{n-2}$ vector bundle with a natural structure of right $\pi^*\B_0$-module (see again \cite[Sect.\ 4]{Kuz:Quadric}).
The analogous presentation of $\E$ is
\begin{equation}\label{eqn:defE}
	0\longrightarrow q^*\B_{-1}\otimes\O_{\PP_S(E)/S}(-2)\longrightarrow q^*\B_0\otimes\O_{\PP_S(E)/S}(-1)\longrightarrow\alpha_*\E\longrightarrow
0.
\end{equation}
In the notation of \cite[Lemma 4.5]{Kuz:Quadric}, $\E=\E_{-1,0}$.

\medskip

The category of $\B_0$-modules may be hard to work with directly.
In some cases, we can reduce to a category of modules over a sheaf of Azumaya algebras, which is easier to deal with.
We conclude this section by recalling this interpretation (see \cite[Sections 3.5 \& 3.6]{Kuz:Quadric}).
We define $S_1\subset S$ to be the degeneracy locus of $\pi$, namely the subscheme parameterizing singular quadrics, and $S_2\subset S_1$ the locus of singular quadrics of corank $\geq 2$.
There are two separate cases to consider, according to parity of $n$.

In this paper we just need to study the case when $n$ is odd. To this end, let $f:\widehat{S} \to S$ be the the stack of $2^{\rm nd}$ roots of $\O_S(S_1)$ along the section $S_1$.
An object of this stack over $T\to S$ is a triple $(L,\phi,\delta)$, where $L$ is a line bundle over $T$, $\phi$ is an isomorphism of $L^2$ with the pullback of $\O_{S}(S_1)$ to $T$ and $\delta$ is a section of $L$ such that $\phi(\delta^2)=S_1$ (see \cite{AGV,Ca}).
Locally over $S$, the category of coherent sheaves on $\widehat{S}$ can be identified with the category of coherent sheaves on the double covering of $S$ ramified along $S_1$
which are $\ZZ/2\ZZ$-equivariant with respect to the involution of the double covering (which only exists locally).
That is, another way of saying, the category of coherent sheaves on the quotient stack of the double cover by the involution.
Kuznestov calls the noncommutative variety $\widehat{S}$, ``$S$ with a $\ZZ/2\ZZ$-stack structure along $S_1$'' (see \cite[Ex.~2.2]{Kuz:Quadric}).

\begin{prop}[{\cite[Prop.~3.15]{Kuz:Quadric}}]\label{prop:AzumayaStack}
There exists a sheaf of algebras $\A_0$ on $\widehat{S}$ such that $f_*\A_0=\B_0$ and
\[
f_*:\coh(\widehat{S},\A_0)\xrightarrow{\sim}\coh(S,\B_0)
\]
is an equivalence of categories.
Moreover, the restriction of $\A_0$ to the complement of $\widehat{S}_2=f^{-1}(S_2)$ in $\widehat{S}$ is a sheaf of Azumaya algebras.
\end{prop}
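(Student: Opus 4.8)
The plan is to prove the statement étale-locally on $S$ and then glue, exploiting that every ingredient --- the degeneracy loci $S_2\subset S_1$, the root stack $\widehat{S}$, and the even Clifford algebra $\B_0$ together with its $\O_S$-module structure $\B_0\cong\O_S\oplus(\wedge^2E\otimes L)\oplus\dots$ --- is canonically attached to the quadric fibration $\pi$. Over $S\setminus S_1$ the fibres are smooth odd-rank quadrics, so there $\B_0$ is the even Clifford algebra of a nondegenerate quadratic form of odd rank, which is classically a sheaf of Azumaya algebras with centre $\O_S$; since $f$ is an isomorphism over $S\setminus S_1$, one may simply set $\A_0=\B_0$ there. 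Thus all the content is concentrated along $S_1$, and the first reduction is to a local model at a point of $S_1\setminus S_2$, where the family of forms has corank exactly one.

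First I would construct $\A_0$. Let $g\colon\widetilde{S}\to S$ be the double cover branched along $S_1$, that is, the atlas of the $\ZZ/2\ZZ$-root stack $\widehat{S}=[\widetilde{S}/(\ZZ/2\ZZ)]$; since $S$ and $S_1$ are smooth away from $S_2$, the cover $\widetilde{S}$ is regular away from $\widetilde{S}_2=g^{-1}(S_2)$. On $\widetilde{S}$ I would take $\A_0$ to be the unique maximal order containing the pulled-back order $g^*\B_0$ inside the algebra over the generic fibre, endowed with the unique $\ZZ/2\ZZ$-equivariant structure extending that of $g^*\B_0$ (the maximal order is preserved by any equivariant structure preserving $g^*\B_0$, so this is canonical). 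Descending along $\widetilde{S}\to\widehat{S}$ produces a sheaf of algebras $\A_0$ on $\widehat{S}$. In the corank-one local model one diagonalises the form as $\langle1,\dots,1,t\rangle$ with $t$ a local equation of $S_1$, so that $\widetilde{S}=\Spec\O_S[s]/(s^2-t)$ is regular and $\A_0$ becomes a matrix algebra over $\O_{\widetilde{S}}$; hence $\A_0$ is Azumaya on $\widehat{S}\setminus\widehat{S}_2$.

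Next I would verify $f_*\A_0=\B_0$. Away from $S_1$ this is the identity $\A_0=\B_0$. Along $S_1$ it is the crux computation: in the local model the equivariant structure on $\A_0$ acts by $s\mapsto-s$ together with conjugation by the distinguished operator attached to the degenerate variable (the Clifford generator $f_n$ with $f_n^2$ a unit), and a direct calculation of the even and odd isotypic parts identifies the invariants $(\A_0)^{\ZZ/2\ZZ}$ with the \emph{non-maximal} even Clifford order $\B_0$ --- the conjugation twist is exactly what forces the off-diagonal entries to acquire a factor of $s$, distinguishing this order from the maximal order on $S$. Because the maximal order and its equivariant structure are canonical, these local isomorphisms $f_*\A_0\cong\B_0$ are independent of the diagonalisation and glue to a global isomorphism of sheaves of $\O_S$-algebras; note this is compatible with $\B_0$ having centre $\O_S=f_*\O_{\widehat{S}}$, even though $\B_0$ fails to be Azumaya along $S_1$.

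Finally, for the equivalence I would show that $f_*$ and the functor $N\mapsto f^*N\otimes_{f^*\B_0}\A_0$ are mutually quasi-inverse. That $f_*$ lands in $\coh(S,\B_0)$ is immediate from $f_*\A_0=\B_0$; that the unit and counit are isomorphisms follows from $\ZZ/2\ZZ$-equivariant Morita theory on the atlas $\widetilde{S}$ --- where $\A_0$ is a matrix algebra, so $\A_0$-modules are equivalent to $\O_{\widetilde{S}}$-modules --- combined with the root-stack identity $f_*\O_{\widehat{S}}=\O_S$ and the projection formula. I expect the main obstacle to be precisely the local analysis along $S_1$: producing $\A_0$ as an honest sheaf of algebras (not merely over the generic point), pinning down the equivariant structure so that the invariants are $\B_0$ on the nose rather than the maximal order, and checking that the maximal order genuinely remains Azumaya throughout the corank-one locus while degenerating over $\widehat{S}_2$. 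Globalising and verifying compatibility of the equivalence with the gluing is then a matter of naturality, but it must be handled with care near $S_2$, where $\A_0$ is no longer Azumaya and only the equivalence of abelian categories --- not Morita triviality --- survives.
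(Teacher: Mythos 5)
The first thing to note is that the paper contains no proof of this statement: it is imported verbatim from \cite[Prop.~3.15]{Kuz:Quadric} (and applied with $S=\PP^2$, $S_1=\Delta$ a smooth quintic, $S_2=\emptyset$), so your proposal can only be measured against Kuznetsov's argument. Your \'etale-local picture agrees with his: in the corank-one model $\langle 1,\dots,1,t\rangle$ the correct equivariant structure is indeed the Galois involution $s\mapsto -s$ twisted by conjugation with the degenerate Clifford generator, its invariants in the relevant matrix algebra are exactly $\B_0$, and the Azumaya property comes from replacing $e_n$ by $e_n/s$. The problems are in how you globalize. First, your atlas does not exist: a double cover $\widetilde S\to S$ branched along $S_1$ requires a square root of $\O_S(S_1)$ in $\Pic(S)$, and in the case this paper needs ($\O_{\PP^2}(5)$) there is none, so $\widehat S$ is not globally of the form $[\widetilde S/(\ZZ/2\ZZ)]$ --- this is precisely why the paper stresses that the involution ``only exists locally''. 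Hence $\A_0$ cannot be produced by descent from one global $\widetilde S$; you must construct it \'etale-locally and glue, which makes your canonicity claim carry all the weight.

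That canonicity claim is false: maximal orders containing a fixed order are not unique in general, and here the ambiguity changes the answer. Work \'etale-locally at a point of $S_1\setminus S_2$ (take $n=3$; units of the strictly henselian local ring $R$ are squares, so the form is $\langle 1,1,t\rangle$ and the generic even Clifford algebra splits), write $g\colon \Spec \widetilde R\to\Spec R$ for the local double cover, $\widetilde R=R[s]/(s^2-t)$. Then
\[
g^*\B_0\cong\begin{pmatrix}\widetilde R & s\widetilde R\\ s\widetilde R & \widetilde R\end{pmatrix},
\qquad
\Lambda_1:=M_2(\widetilde R)\supset g^*\B_0,
\qquad
\Lambda_2:=\begin{pmatrix}\widetilde R & s^{-1}\widetilde R\\ s\widetilde R & \widetilde R\end{pmatrix}\supset g^*\B_0 .
\]
Both $\Lambda_1$ and $\Lambda_2$ are maximal orders ($\Lambda_2=\mathrm{diag}(s^{-1},1)\,M_2(\widetilde R)\,\mathrm{diag}(s,1)$), and both are stable under the unique semilinear involution extending the canonical one on $g^*\B_0$, namely entrywise $s\mapsto -s$ composed with conjugation by a diagonal unit --- exactly your twisted action. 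But the invariants of $\Lambda_1$ equal $\B_0$, while the invariants of $\Lambda_2$ contain the matrix with $s^{-1}$ in the upper-right corner, hence form an order strictly larger than $\B_0$. So ``the unique maximal order containing $g^*\B_0$'' does not define $\A_0$; different admissible choices have different pushforwards, only one of which is $\B_0$, and without canonicity your local constructions have nothing forcing them to glue. Kuznetsov avoids this entirely: his $\A_0$ is never characterized by maximality but is built from the Clifford data --- \'etale-locally it is the even Clifford algebra of the nondegenerate form obtained by rescaling the degenerate generator by the tautological root (the algebra generated by $f^*\B_0$ and the elements $s^{-1}e_ie_n$), a description involving only canonical data on the root stack, so that the gluing is automatic. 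A smaller point: your argument for the equivalence invokes only $f_*\A_0=\B_0$, $f_*\O_{\widehat S}=\O_S$ and the projection formula, but all of these hold for $\A_0=\O_{\widehat S}$, for which $f_*$ is not an equivalence (it kills the weight-one part on the $\mu_2$-gerbe over $S_1$); the actual mechanism, which your Morita remark must make explicit, is that $\A_0$ couples the two $\mu_2$-weights along that gerbe.
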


This will be the case for any cubic threefold.
In fact, since we assume from the beginning that a cubic threefold is smooth and the projection line is generic, then $S_1$ is smooth and $S_2$ is empty.

\section{Cubic threefolds}\label{sec:3folds}

This section contains the proofs of our main results on ACM bundles on cubic threefolds.
The goal is to generalize a result of Casanellas--Hartshorne on Ulrich bundles.

As explained in the introduction, the idea is to use Kuznetsov's results on quadric fibrations, to reduce the problem of studying ACM bundles on a cubic threefold to the study of complexes of sheaves on $\PP^2$ with the action of a sheaf of Clifford algebras $\B_0$.

The main technical parts are Sections \ref{subsec:modMd1} and \ref{subsec:modMdgeneral}; there we prove some results on moduli spaces of objects in $\Db(\PP^2,\B_0)$, which are stable with respect to a Bridgeland stability condition.
We come back to Ulrich bundles on cubic threefolds in Section \ref{subsec:Ulrich}.

\subsection{The setting}\label{subsec:3foldsgeom}

Let $Y\subset\PP^4$ be a cubic threefold.
Let $l_0\subseteq Y$ be a general line and consider the blow-up $\uPP$ of $\PP^4$ along $l_0$.
By ``general'' we mean that, if $l$ is any other line meeting $l_0$, then the plane containing them intersects the cubic in three distinct lines (we just avoid the lines of second type, see \cite[Def. 6.6]{CG}).
We set $q: \uPP \to \PP^2$ to be the $\PP^2$-bundle induced by the projection from $l_0$ onto a plane and we denote by $\uY$ the strict transform of $Y$ via this blow-up.
The restriction of $q$ to $\uY$ induces a conic fibration $\pi: \uY \to \PP^2$.
The geometric picture can be summarized by the following diagram
\[
\xymatrix{
D\ar[d]_-{s}\ar@{^{(}->}[r]&\uY \ar@{^{(}->}[rr]^-{\alpha}\ar[d]_-{\sigma}\ar[rrd]_-{\pi}&& \uPP=\PP_{\PP^2}(\O_{\PP^2}^{\oplus 2}\oplus \O_{\PP^2}(-h))\ar[d]_-{q}\\
l_0\ar@{^{(}->}[r]& Y\subset \PP^4 && \PP^2.
}
\]
In particular, the vector bundle $E$ on $S=\PP^2$ introduced in Section \ref{subsec:quadric} is now $\O_{\PP^2}^{\oplus 2}\oplus \O_{\PP^2}(-h)$.
Set $D \subset \uY$ to be the exceptional divisor of the blow-up $\sigma: \uY \to Y$.
We denote by $h$ both the class of a line in $\PP^2$ and its pull-backs to $\uPP$ and $\uY$.
We call $H$ both the class of a hyperplane in $\PP^4$ and its pull-backs to $Y$, $\uPP$, and $\uY$.
We recall that $\O_{\uY}(D)\cong\O_{\uY}(H-h)$, the relative ample line bundle is $\O_{\uPP}(H)$, the relative canonical bundle is $\O_{\uPP}(h-3H)$, and the dualizing line bundle $\omega_{\uY}$ is isomorphic to $\O_{\uY}(-h-H)$ (see, e.g., \cite[Lemma 4.1]{Kuz:4fold}).

\smallskip

The sheaf of even (resp.~ odd) parts of the Clifford algebra corresponding to $\pi$, from Section \ref{subsec:quadric}, specializes in the case of cubic threefolds to
\begin{equation}\label{bcoh}
\begin{split}
\B_0&\cong\O_{\PP^2}\oplus\O_{\PP^2}(-h)\oplus\O_{\PP^2}(-2h)^{\oplus 2}\\
\B_1&\cong\O_{\PP^2}^{\oplus 2}\oplus\O_{\PP^2}(-h)\oplus\O_{\PP^2}(-2h),
\end{split}
\end{equation}
as sheaves of $\O_{\PP^2}$-modules.
The rank $2$ vector bundles $\E'$ and $\E$ sit in the short exact sequences provided by \eqref{eqn:defE'} and \eqref{eqn:defE} respectively, where $L=\O_{\PP^2}(-h)$.

Following \cite{Kuz:4fold} and \cite[Sect.\ 2.1]{BMMS}, one can give a description of the full subcategory $\cat{T}_Y$ in the semiorthogonal decomposition \eqref{eqn:semiorthgen} of $\Db(Y)$.
Indeed, first consider the semiorthogonal decomposition of $\Db(\uY)$ in Theorem \ref{thm:KuzQuadric} and the one
\begin{equation*}
	\Db(\uY)=\langle\sigma^*(\cat{T}_Y),\O_{\uY},\O_{\uY}(H),i_*\O_{D},i_*\O_{D}(H)\rangle
\end{equation*}
obtained by thinking of $\uY$ as the blow-up of $Y$ along $l_0$ and using the main result in \cite{Orlov}.
Then one shows that
\begin{equation*}\label{eqn:last}
	\cat{R}_{\O_{\uY}(-h)}\circ\Phi(\Db(\PP^2,\B_0))=\langle\sigma^*\cat{T}_Y,\O_{\uY}(h-H)\rangle
\end{equation*}
and thus we get a fully faithful embedding
\begin{equation}\label{eqn:Xi3fold}
	 \Xi_3:=(\sigma_*\circ \cat{R}_{\O_{\uY}(-h)}\circ \Phi)^{-1}:\cat{T}_Y\to \Db(\PP^2,\B_0).
\end{equation}
Note that, in view of \cite[Prop.\ 2.9(i)]{BMMS}, $\Xi_3(\cat{T}_Y)$ is the right orthogonal of the category generated by $\B_1$ in $\Db(\PP^2,\B_0)$.

\begin{rem}\label{rmk:Psi&Xi}
	For all $m\in\ZZ$, we have $\Psi(\O_{\uY}(mh))=0$ (see, e.g., \cite[Ex.~2.4]{BMMS}).
	Thus, if $F\in \cat{T}_Y$, then $\Xi_3(F)=\Psi(F)$.
\end{rem}

\subsection{$\B_0$-modules and stability}\label{subsec:modMd1}

Our first goal is to study moduli spaces of stable $\B_0$-modules.
In this section we present how the usual notion of stability extends to our more general situation.

Let $K(\PP^2,\B_0):= K(\Db(\PP^2,\B_0))$ denote the Grothendieck group.
For objects in $\Db(\PP^2,\B_0)$ we can consider the Euler characteristic
\[
\chi(-,-):=\sum_i(-1)^i\hom^i_{\Db(\PP^2,\B_0)}(-,-).
\]
A class $[A]\in K(\PP^2,\B_0)$ in the Grothendieck group is \emph{numerically trivial} if $\chi([M],[A])=0$, for all $[M]\in K(\PP^2,\B_0)$.
We define the \emph{numerical Grothendieck group} $\N(\PP^2,\B_0)$ as the quotient of $K(\PP^2,\B_0)$ by numerically trivial classes.

Given $K\in \Db(\PP^2,\B_0)$, we define its Chern character as
\begin{equation*}
	\ch(K):=\ch(\Forg(K))\in K(\PP^2)\otimes \QQ = H^*(\PP^2,\QQ) \cong \QQ^{\oplus 3},
\end{equation*}
where $\Forg:\Db(\PP^2,\B_0)\to \Db(\PP^2)$ is the functor forgetting the $\B_0$-action.
By linearity the Chern character extends to $K(\PP^2,\B_0)$; it factors through $\N(\PP^2,\B_0)$.

\begin{rem}\label{rmk:numB0}
	(i) By \cite[Prop.~2.12]{BMMS} we have $\N(\PP^2 , \B_0)= \ZZ[\B_1] \oplus \ZZ[\B_0 ] \oplus \ZZ[\B_{-1}]$.
	The Chern characters $\ch(\B_{-1})=(4,-7,\tfrac{15}{2})$, $\ch(\B_0)=(4,-5,\tfrac{9}{2})$, and $\ch(\B_1)=(4,-3,\tfrac{5}{2})$ are linearly independent.
	Hence, the Chern character induces a group homomorphism $\N (\PP^2 ,\B_0)\to K(\PP^2)$ that is an isomorphism over $\QQ$.

	(ii) If $l\subseteq Y$ is a line and $\I_l$ is its ideal sheaf, by \cite[Ex.~2.11]{BMMS}, we have
	\begin{equation}\label{eqn:classXiI}
		\begin{split}
		[\Xi_3(\I_l)]&= [\B_1]-[\B_0] \in \N(\PP^2 , \B_0)\\
		\ch(\Xi_3(\I_l))&= (0,2,-2).
		\end{split}
	\end{equation}

		(iii) Note that $[\B_2]=[\B_{-1}]-3[\B_{0}]+3[\B_{1}]$ and $[\B_{-2}]=3[\B_{-1}]-3[\B_{0}]+[\B_{1}]$.

(iv) Given $[F]=x[\B_{-1}]+y[\B_{0}]+z[\B_{1}]$ or $\ch([F])=(r,c_1,\ch_2)$, we can compute the Euler characteristic as a $\B_0$-module with the following formulas
\begin{equation}\label{eqn:chiB0}
\begin{split}
	\chi(F,F)&=x^2+y^2+z^2+3xy+3yz+6xz\\
	&=-\frac{7}{64}r^2-\frac{1}{4}c_1^2+\frac{1}{2}r\ch_2.
\end{split}
\end{equation}

(v) Let $F\in K(\PP^2,\B_0)$, such that $\ch(F)=(0,2d,z)$.
Then
\begin{equation}\label{eqn:chiB1F}
\chi(\B_1,F)=z+2d
\quad\text{and}\quad \chi(\B_0,F)=z+3d.
\end{equation}

(vi) The Serre functor in $\Db(\PP^2,\B_0)$ is given by $-\otimes_{\B_0}\B_{-1}[2]$ (see, e.g., \cite[Prop.~2.9]{BMMS}).
\end{rem}

We define the Hilbert polynomial of a $\B_0$-module $G$ as the Hilbert polynomial of $\Forg (G)$ with respect to $\O_{\PP^2}(h)$.
Then, the notion of Gieseker (semi)stability is defined in the usual way.
Moduli spaces of semistable $\B_0$-modules have been constructed by Simpson in \cite[Thm.~4.7]{simp}.

We can also consider the slope stability for torsion-free sheaves in $\coh(\PP^2,\B_0)$.
Indeed, we have two natural functions \emph{rank} and \emph{degree} on $\N(\PP^2,\B_0)$:
\begin{align*}
\rk&:\N(\PP^2,\B_0)\to\ZZ,\qquad\rk(K):=\rk(\mathrm{Forg}(K))\\
\deg&:\N(\PP^2,\B_0)\to\ZZ,\qquad\deg(K):=\mathrm{c}_1(\mathrm{Forg}(K)).\mathrm{c}_1(\O_{\PP^2}(h)).
\end{align*}
Given $K\in\coh(\PP^2,\B_0)$ with $\rk(K)\neq0$, we can define the slope $\mu(K):=\deg(K)/\rk(K)$ and the notion of $\mu$-(semi)stability in the usual way.
When we say that $K$ is either torsion-free or torsion of dimension $d$, we always mean that $\mathrm{Forg}(K)$ has this property.

\begin{rem}\label{rmk:Bistable}
As the rank of $\B_0$ and $\B_1$ is $4$, a consequence of \cite[Lemma 2.13(i)]{BMMS} is that these two objects are $\mu$-stable.
Moreover, all morphisms $\B_0\to\B_1$ are injective.
\end{rem}

\begin{lem}\label{lem:Columbus}
Let $A,B\in\coh(\PP^2,\B_0)$ be such that $\ch(A)=\ch(B)$.
Assume one of the following two conditions is satisfied:
\begin{itemize}
\item either $A$ and $B$ are torsion-free sheaves and $\mu$-semistable, or
\item $A$ and $B$ are torsion sheaves pure of dimension $1$ and semistable.
\end{itemize}
Then $\Ext^2(A,B)=0$.
If $A=B$ is actually stable, then $\chi(A,A)\leq1$.
\end{lem}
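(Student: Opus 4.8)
The plan is to prove the vanishing $\Ext^2(A,B)=0$ via Serre duality in $\Db(\PP^2,\B_0)$, and then to extract the inequality $\chi(A,A)\leq 1$ as a purely homological consequence once $A=B$ is stable. By Remark \ref{rmk:numB0}(vi), the Serre functor is $-\otimes_{\B_0}\B_{-1}[2]$, so we have
\[
\Ext^2(A,B)\cong\Hom(B,A\otimes_{\B_0}\B_{-1})^\vee.
\]
The twist $A\mapsto A\otimes_{\B_0}\B_{-1}=A\otimes_{\O_{\PP^2}}\O_{\PP^2}(-h)$ (recall $\B_{-1}=\B_1\otimes L=\B_1\otimes\O_{\PP^2}(-h)$ and that tensoring by a line bundle over $\O_{\PP^2}$ commutes with the $\B_0$-structure in the relevant way) decreases the slope, respectively the reduced Hilbert polynomial, of $B$. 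Thus the first step is to record how the Serre twist shifts numerical invariants, checking that it strictly lowers $\mu$ in the torsion-free case and strictly lowers the normalized leading coefficient of the Hilbert polynomial in the pure-dimension-$1$ case.

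The heart of the argument is then a standard semistability comparison. In the torsion-free case, $A$ and $B$ are $\mu$-semistable of equal slope $\mu(A)=\mu(B)$, so $A\otimes_{\B_0}\B_{-1}$ is $\mu$-semistable of slope $\mu(B)-1<\mu(B)$; since a nonzero morphism from a $\mu$-semistable sheaf $B$ to a $\mu$-semistable sheaf of \emph{strictly smaller} slope must vanish, we get $\Hom(B,A\otimes_{\B_0}\B_{-1})=0$, hence $\Ext^2(A,B)=0$. In the pure dimension-$1$ case the same mechanism applies with Gieseker semistability: $A\otimes_{\B_0}\B_{-1}$ is semistable with reduced Hilbert polynomial strictly smaller than that of $B$, and a nonzero map between semistable pure sheaves would force the reduced Hilbert polynomial of the source to be at most that of the target, a contradiction. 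One must only verify that $\mu$-semistability (resp.\ Gieseker semistability) of $\Forg(A)$ is preserved under tensoring by a line bundle, which is immediate since the line-bundle twist rescales Hilbert polynomials uniformly, and that the $\B_0$-module structure does not interfere with these comparisons because all the stability notions are defined through $\Forg$.

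For the final assertion, suppose $A=B$ is stable. Stability gives $\Hom(A,A)\cong\CC$, so $\hom^0(A,A)=1$. Since $A\in\coh(\PP^2,\B_0)$ sits in cohomological degrees $0,1,2$ only, we have
\[
\chi(A,A)=\hom^0(A,A)-\hom^1(A,A)+\hom^2(A,A)=1-\hom^1(A,A)+0\leq 1,
\]
using the just-proved vanishing $\Ext^2(A,A)=0$ and $\hom^1(A,A)\geq 0$. The main obstacle I anticipate is purely bookkeeping: one must be careful that the Serre twist by $\B_{-1}$ really corresponds, after applying $\Forg$, to tensoring by $\O_{\PP^2}(-h)$ and hence produces the expected strict decrease in slope or reduced Hilbert polynomial; once this identification of the Serre functor with a negative line-bundle twist is pinned down, the stability comparison is routine.
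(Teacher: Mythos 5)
Your overall strategy coincides with the paper's (Serre duality via the Serre functor $-\otimes_{\B_0}\B_{-1}[2]$, then a semistability comparison, then $\chi(A,A)=1-\hom^1(A,A)+\hom^2(A,A)\leq 1$; the last part is correct as written). However, the step you yourself single out as the crux contains a genuine error: the identification $A\otimes_{\B_0}\B_{-1}=A\otimes_{\O_{\PP^2}}\O_{\PP^2}(-h)$ is false. From $\B_{-1}=\B_1\otimes\O_{\PP^2}(-h)$ one gets $A\otimes_{\B_0}\B_{-1}\cong\left(A\otimes_{\B_0}\B_1\right)\otimes\O_{\PP^2}(-h)$, and the odd Clifford twist $-\otimes_{\B_0}\B_1$ is \emph{not} the identity; it is $\B_{-2}$, not $\B_{-1}$, that equals $\B_0\otimes\O_{\PP^2}(-h)$. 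Concretely, $\B_0\otimes_{\B_0}\B_{-1}=\B_{-1}\cong\O_{\PP^2}(-h)^{\oplus2}\oplus\O_{\PP^2}(-2h)\oplus\O_{\PP^2}(-3h)$ by \eqref{bcoh}, whereas $\B_0\otimes\O_{\PP^2}(-h)\cong\O_{\PP^2}(-h)\oplus\O_{\PP^2}(-2h)\oplus\O_{\PP^2}(-3h)^{\oplus2}$; numerically, $\ch(\B_{-1})=(4,-7,\tfrac{15}{2})$ while $\ch(\B_0(-h))=(4,-9,\tfrac{23}{2})$. In particular the Serre twist lowers slopes by $\tfrac{1}{2}$, not by $1$, and your justification for preservation of semistability (``a line-bundle twist rescales Hilbert polynomials uniformly'') does not apply, because the functor is not a line-bundle twist.

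The gap is repairable, and the repair is exactly what the paper's one-line proof implicitly invokes when it says ``$-\otimes_{\B_0}\B_{-1}$ preserves stability.'' The functor $-\otimes_{\B_0}\B_{-1}$ is an exact auto-equivalence of $\coh(\PP^2,\B_0)$ (its inverse is $-\otimes_{\B_0}\B_1$, since $\B_1\otimes_{\B_0}\B_{-1}\cong\B_0$), it preserves dimensions of supports, and its effect on numerical invariants is a uniform shift: using Remark \ref{rmk:numB0} (i) and (iii) one computes that for torsion-free sheaves $(\rk,\deg)\mapsto(\rk,\deg-\tfrac{\rk}{2})$, so every slope drops by exactly $\tfrac{1}{2}$, and for pure one-dimensional sheaves $c_1$ is unchanged while $\ch_2\mapsto\ch_2-\tfrac{c_1}{2}$, so every reduced Hilbert polynomial drops by exactly $\tfrac{1}{2}$. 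Because the shift is the same constant for a sheaf and for all of its subobjects, and because an exact equivalence puts subobjects of $A$ and of $A\otimes_{\B_0}\B_{-1}$ in bijection, the functor preserves $\mu$-semistability, respectively Gieseker semistability, and strictly decreases the slope, respectively the reduced Hilbert polynomial. With this substituted for your line-bundle identification, your comparison argument gives $\Hom(B,A\otimes_{\B_0}\B_{-1})=0$, hence $\Ext^2(A,B)=0$, and the remainder of your proof stands.
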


\begin{proof}
The first claim follows directly from Serre duality.
Indeed, by Remark \ref{rmk:numB0}, (vi), we have
\[
\Ext^2(A,B) = \Hom(B, A\otimes_{\B_0}\B_{-1})^\vee = 0,
\]
since $-\otimes_{\B_0}\B_{-1}$ preserves stability.
For the second, simply observe that
\[
\chi(A,A) = \mathrm{hom}(A,A) - \mathrm{ext}^1(A,A) = 1 - \mathrm{ext}^1(A,A) \leq 1.
\]
\end{proof}

\subsubsection*{Bridgeland stability}

We will need to study stability for objects in $\Db(\PP^2,\B_0)$ which are not necessarily sheaves.
To this end, we briefly recall the concept of Bridgeland stability condition.
For all details we refer to \cite{Br,KS}.

\begin{defn}
A (numerical, full) \emph{Bridgeland stability condition} on $\Db(\PP^2,\B_0)$ consists of a pair $\sigma=(Z,\cat{A})$, where
\begin{itemize}
\item $Z:\N(\PP^2,\B_0)\to\CC$,
\item $\cat{A}$ is the heart of a bounded $t$-structure on $\Db(\PP^2,\B_0)$,
\end{itemize}
satisfying the following compatibilities:
\begin{itemize}
\item[(a)]\label{enum:phase} For all $0\neq G\in\cat{A}$,
\[
Z(G)\in \{ z\in\CC^*:z=|z|\exp(i\pi\phi), \, 0< \phi \leq 1 \};
\]
\item[(b)]\label{enum:HN} Harder--Narasimhan filtrations exist with respect to $\sigma$-stability, namely for any $0\neq G\in\cat{A}$, there is a filtration in $\cat{A}$
\[
0=G_0\subset G_1 \subset \ldots \subset G_N=G
\]
such that $F_i:=G_i/G_{i-1}$ is $\sigma$-semistable and $\phi(F_1)>\ldots>\phi(F_N)$.
\item[(c)] the \emph{support property} holds, namely there exists a constant $C>0$ such that, for all $\sigma$-semistable $F\in \cat{A}$,
\[
\| F \| \leq C \cdot |Z(F)|.
\]
\end{itemize}
\end{defn}

In the previous definition, we used the following notation: by (a), any $0\neq G\in\cat{A}$ has a \emph{phase} $\phi(G):=\frac{1}{\pi}\mathrm{arg}(Z(G))\in(0,1]$.
The notion of $\sigma$-stability in (b) is then given with respect to the phase: $G\in \cat{A}$ is \emph{$\sigma$-(semi)stable} if, for all subobjects $G'\subset G$ in $\cat{A}$, $\phi(G')<(\leq)\phi(G)$.
Finally, we denoted by $\| - \|$ any norm in $\N(\PP^2,\B_0)\otimes\RR$.
The support property is necessary to deform stability conditions and for the existence of a well-behaved wall and chamber structure (this is \cite[Sect.~9]{Br1}; the general statement we need is \cite[Prop.~3.3]{BM}).



We will only need a special family of stability conditions on $\Db(\PP^2,\B_0)$.

\begin{defn}\label{def:stabfunc}
For $m\in\RR_{>0}$, we define
\begin{equation*}
 \left.\begin{array}{rcl}
Z_{m}:\N(\PP^2,\B_0)&\longrightarrow & \CC \\
{[F]} & \longmapsto&rm^2-\dfrac{9r}{64}-\dfrac{c_1}{2}-\dfrac{\ch_2}{2}+m\sqrt{-1}{\left(r+c_1\right)},
 \end{array}\right.
\end{equation*}
where $\ch([F])=(r,c_1,\ch_2)$.
\end{defn}

By the explicit computations in Remark \ref{rmk:numB0},
\begin{equation}\label{eqn:Z}
\begin{split}
& Z_m([\B_0])=4m^2-\frac{5}{16}-m\sqrt{-1}\\
& Z_m([\B_1])=4m^2-\frac{5}{16}+m\sqrt{-1}\\
&Z_m([\Xi_3(\I_l)])=2m\sqrt{-1}.
\end{split}
\end{equation}

To define an abelian category which is the heart of a bounded $t$-structure on $\Db(\PP^2,\B_0)$, let $\cat{T},\cat{F}\subseteq\coh(S,\beta)$ be the following two full additive
subcategories: The non-trivial objects in $\cat{T}$ are the sheaves $A\in\coh(\PP^2,\B_0)$ such that their torsion-free part has Harder--Narasimhan factors (with respect to $\mu$-stability) of slope $\mu>-1$.
A non-trivial twisted sheaf $A\in\coh(\PP^2,\B_0)$ is an object in $\cat{F}$ if $A$ is torsion-free and every $\mu$-semistable Harder--Narasimhan factor of $A$ has slope $\mu\leq-1$.
It is easy to see that $(\cat{T},\cat{F})$ is a torsion theory and following \cite{Br1}, we define the heart of the induced $t$-structure as the abelian category
\[
\cat{A}:=\left\{A\in\Db(\PP^2,\B_0):\begin{array}{l}
\bullet\;\;\H^i(A)=0\mbox{ for }i\not\in\{-1,0\},\\\bullet\;\;
\H^{-1}(A)\in\cat{F},\\\bullet\;\;\H^0(A)\in\cat{T}\end{array}\right\}.
\]
By Remarks \ref{rmk:numB0} and \ref{rmk:Bistable}, $\B_0[1],\B_1,\Xi_3(\I_l)\in\cat{A}$.

\begin{lem}\label{lem:exstab}
The pair $\sigma_m:=(Z_m,\cat{A})$ defines a stability condition in $\Db(\PP^2,\B_0)$, for all $m>\frac{1}{4}$.
\end{lem}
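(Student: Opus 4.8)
The plan is to verify the three axioms of a Bridgeland stability condition for the pair $\sigma_m=(Z_m,\cat{A})$ directly, exploiting the fact that $\cat{A}$ was constructed as a tilt of $\coh(\PP^2,\B_0)$ with respect to $\mu$-stability, so that much of the standard surface theory (as in \cite{Br1,BM}) transfers to the $\B_0$-linear setting. The key structural observation is that the central charge $Z_m$ is, up to the shift by $-\tfrac{9r}{64}$ coming from the noncommutative Euler form, the same quadratic-in-$m$ expression used for stability conditions on surfaces: the real part is $\rho(F):=rm^2-\tfrac{9r}{64}-\tfrac{c_1}{2}-\tfrac{\ch_2}{2}$ and the imaginary part is $\Im Z_m(F)=m(r+c_1)=m\cdot\deg_{-1}(F)$, where $\deg_{-1}$ measures the degree of the twist $F(h)$ relative to $\mu=-1$. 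This is exactly the quantity the torsion pair $(\cat{T},\cat{F})$ is designed to make nonnegative.

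\emph{First} I would verify the positivity axiom (a). For $0\neq G\in\cat{A}$ one reduces, via the long exact sequence relating $\H^{-1}(G)\in\cat{F}$ and $\H^0(G)\in\cat{T}$, to checking it on objects of $\cat{T}$ and shifts of objects of $\cat{F}$. For $A\in\cat{T}$ torsion-free one has $\Im Z_m(A)=m(\rk A+\deg A)\geq 0$ by the definition of $\cat{T}$ (every HN factor has $\mu>-1$, i.e. $\rk+\deg>0$ unless the factor is torsion), with equality only for torsion supported in dimension $0$, where the real part $-\tfrac{1}{2}\ch_2<0$ forces phase $1$; the dimension-$1$ torsion case gives $\Im Z_m>0$. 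For $A\in\cat{F}[1]$, i.e. $\H^{-1}=A[-1]$, one checks $\Im Z_m(A[1])=-\Im Z_m(A)\geq 0$ again by the defining slope bound $\mu\leq-1$ on HN factors of $\cat{F}$, with the boundary case $\mu=-1$ landing on the negative real axis, hence phase $1$ after the shift. The inequality $m>\tfrac14$ enters precisely here to control the $\mu=-1$ boundary: one must rule out a nonzero object with $Z_m=0$, and the bound guarantees the real part does not vanish simultaneously for the relevant classes (cf. the values $4m^2-\tfrac{5}{16}$ in \eqref{eqn:Z}, which are positive exactly for $m>\tfrac14$).

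\emph{Next}, for the Harder--Narasimhan axiom (b), I would invoke the standard criterion that HN filtrations exist once the image of $\Im Z_m$ on $\cat{A}$ is discrete and $\cat{A}$ is Noetherian; both follow because $\Im Z_m$ takes values in $m\ZZ$ on $\N(\PP^2,\B_0)$ and $\cat{A}$ is the tilt of the Noetherian category $\coh(\PP^2,\B_0)$, so one can cite \cite[Prop.~7.1]{Br1} (or its $\B_0$-linear analogue) verbatim. \emph{Finally}, for the support property (c), I would use the explicit formula \eqref{eqn:chiB0} for the Euler form: the quadratic form $\chi(F,F)=-\tfrac{7}{64}r^2-\tfrac14 c_1^2+\tfrac12 r\ch_2$ is, after a linear change of coordinates, a multiple of $|\Re Z_m|^2-|\Im Z_m|^2$ plus a definite correction, so one exhibits a constant $C$ making $\|F\|\leq C|Z_m(F)|$ for semistable $F$ by the usual Bogomolov-type argument, using that the discriminant is bounded below on $\mu$-semistable objects.

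\emph{The main obstacle} I anticipate is axiom (a) at the boundary slope $\mu=-1$: one must show no nonzero object of $\cat{A}$ is killed by $Z_m$, and that objects with $\Im Z_m=0$ genuinely lie on the negative real axis (phase $1$) rather than being forbidden. This is where the hypothesis $m>\tfrac14$ is not cosmetic—it is exactly the threshold below which the real parts of $Z_m(\B_0)$ and $Z_m(\B_1)$ change sign—and where the interplay between the torsion pair and the noncommutative correction term $-\tfrac{9r}{64}$ must be handled carefully rather than quoted from the commutative case.
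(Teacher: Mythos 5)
Your overall architecture matches the paper's: the Harder--Narasimhan and support properties are deferred to the standard tilting arguments of \cite[Prop.~7.1, Sect.~11]{Br1} and \cite[Sect.~3]{Toda:Extremal}, and everything reduces to axiom (a) at the boundary slope $\mu=-1$. But your resolution of that boundary case --- the one step you yourself flag as the crux --- is wrong. You claim that $m>\tfrac14$ is ``exactly the threshold below which the real parts of $Z_m(\B_0)$ and $Z_m(\B_1)$ change sign,'' citing the values $4m^2-\tfrac{5}{16}$ from \eqref{eqn:Z}. This fails on two counts. First, $4m^2-\tfrac{5}{16}>0$ holds exactly for $m>\tfrac{\sqrt{5}}{8}\approx 0.28$, not for $m>\tfrac14$; the number $\tfrac{\sqrt{5}}{8}$ is the wall $m_0$ that governs the later wall-crossing (Lemma \ref{lem:stabM1}), not the existence threshold for $\sigma_m$. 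Second, the signs of those real parts are irrelevant to axiom (a): since $\mu(\B_1)=-\tfrac34>-1$ and $\mu(\B_0)=-\tfrac54<-1$, both $\B_1\in\cat{T}$ and $\B_0[1]$ have $\mathrm{Im}\,Z_m=m>0$, so they lie in the open upper half-plane regardless of their real parts; neither is a boundary object.

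The boundary objects are the $\mu$-stable torsion-free sheaves $A$ with $\mu(A)=-1$, i.e.\ $r+c_1=0$, for which you must prove $\mathrm{Re}\,Z_m(A)>0$ (so that $A[1]$ has phase $1$ rather than $Z_m(A[1])=0$). This is where the two genuinely noncommutative inputs enter, and your proposal deploys neither of them here. Using \eqref{eqn:chiB0} one rewrites
\[
\mathrm{Re}\,Z_m(A)=\frac{1}{r}\left(-\chi(A,A)+m^2r^2-\frac{1}{4}\left(r+c_1\right)^2\right)=\frac{1}{r}\left(-\chi(A,A)+m^2r^2\right),
\]
and then one needs: (i) the Bogomolov-type inequality $\chi(A,A)\leq 1$ for $\mu$-stable torsion-free $\B_0$-modules, which is Lemma \ref{lem:Columbus} and rests on Serre duality $-\otimes_{\B_0}\B_{-1}[2]$ rather than on the commutative discriminant; and (ii) the rank bound $r\geq 4$ for torsion-free $\B_0$-modules from \cite[Lemma~2.13]{BMMS}. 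Together these give $-\chi(A,A)+m^2r^2\geq -1+16m^2>0$ precisely when $m>\tfrac14$, which is the true source of the threshold. In your proposal the Bogomolov-type input appears only in the sketch of the support property, and only in the vague commutative form ``the discriminant is bounded below,'' which produces no constant at all; without (i) and (ii) applied at the boundary case, axiom (a) --- and hence the lemma --- is not established.
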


\begin{proof}
This follows exactly in the same way as in \cite[Prop.~7.1 and Sect.~11]{Br1} and \cite[Prop.~3.13]{Toda:Extremal}.
The only non-standard fact that we need is a Bogomolov--Gieseker inequality for torsion-free $\mu$-stable sheaves.
This is precisely Lemma \ref{lem:Columbus}: for $A\in\coh(\PP^2,\B_0)$ torsion-free and $\mu$-stable, $\chi(A,A)\leq 1$ gives us the desired inequality.

By proceeding as in \cite[Sect.~3]{Toda:Extremal}, to prove the lemma we only have to show property (a) in the definition of stability condition.
Let $A$ be a torsion-free $\mu$-stable sheaf.
Assume further that $\mu(A)=-1$, and so $\mathrm{Im}(Z_m([A]))=0$.
By \eqref{eqn:chiB0} and the fact that $r>0$, we have
\begin{equation*}
 \mathrm{Re}(Z_m([A]))=rm^2-\dfrac{9r}{64}-\dfrac{c_1}{2}-\dfrac{\ch_2}{2}=\frac{1}{r}\left(-\chi(A,A)+m^2r^2-\frac{1}{4}\left(r+c_1\right)^2\right).
\end{equation*}
We need to prove the inequality $\mathrm{Re}(Z_m([A]))>0$, namely $-\chi(A,A)+m^2r^2>0$.
By \cite[Lemma 2.13]{BMMS}, $r\geq 4$, and so for all $m>\frac{1}{4}$, we have $\mathrm{Re}(Z_m([A]))>0$, as we wanted.
\end{proof}

We also observe that all the arguments in \cite{toda} generalize to the non-commutative setting (see also \cite{Lieb:twisted,Lieb}).
In particular, for all $m > \frac{1}{4}$, it makes sense to speak about moduli spaces of $\sigma_m$-semistable objects in $\cat{A}$ as Artin stacks (of finite-type over $\CC$, if we fix the numerical class), and about moduli space of $\sigma_m$-stable objects as algebraic spaces.

\begin{rem}\label{rmk:Bi_sigstab}
As in \cite[Lemma 5.5]{MS}, the objects $\B_0[1]$ and $\B_1$ are $\sigma_m$-stable, for all $m>\frac{1}{4}$.
\end{rem}

\subsection{Moduli spaces of stable $\B_0$-modules: general results}\label{subsec:modMdgeneral}

By keeping in mind \eqref{eqn:classXiI}, we are interested in the following moduli spaces of Gieseker semistable sheaves in $(\PP^2,\B_0)$:

\begin{defn}
Let $d\geq 1$.
We denote by $\MMM_d$ the moduli space of semistable $\B_0$-modules with numerical class $d[\B_1]-d[\B_0]$,
or equivalently, with Chern character $(0,2d,-2d)$.
We denote by $\MMM^s_d\subseteq \MMM_d$ the open subset of stable $\B_0$-modules.
\end{defn}

\begin{ex}\label{ex:objMd}
	Let $C'\subset Y$ be a rational curve of degree $d$.
	Note that, by using for example \cite[Thm.~62]{starr}, one can see that there exists a $2d$-dimensional family of smooth rational curves of degree $d$ on $Y$.
	We can consider the following construction due to Kuznetsov \cite[Lemma 4.6]{Kuz:V14}.
	Set
	\begin{equation}\label{eqn:theta-char}
	 F_d:=   \mathbf{L}_{\O_{Y}}( i_*\O_{C'}(d-1))[-1]   = \ker\left(H^0(Y, i_*\O_{C'}(d-1)) \otimes \O_Y \stackrel{\rm ev}\to i_*\O_{C'}(d-1)\right)\in \cat{T}_Y.
	\end{equation}
	where $i:C'\hookrightarrow Y$.
	Then $\Xi_3(F_d)\in \langle \B_1 \rangle^\perp$.
	Suppose that $C'\cap l_0=\emptyset$.
	Denote by $j$ the composition $C'\hookrightarrow\uY\xrightarrow{\pi}\PP^2$ and suppose that, if we let $C:=j(C')$, the morphism $\res{j}{C'}$ is birational.
	As $C'$ and $l_0$ do not intersect, we can argue exactly as in \cite[Ex.~2.4]{BMMS}. In particular, using that $\Psi(\O_{\widetilde{Y}}(mh))=0$ for all integers $m$, we conclude that
	\begin{equation*}
	\Xi_3(F_d) \cong j_*(\E|_{C'} \otimes \O_{C'}(-1))\otimes\O_{\PP^2}(2h).
	\end{equation*}
	So $\Forg(\Xi_3(F_d))$ is a rank $2$ torsion-free sheaf supported on $C$ and $\Xi_3(F_d)\in \MMM^s_d$.
	The $d=1$ case is treated in Example \ref{ex:RestrLines} below.
	We will also use this example for $d=2$ and $d=3$.
	In such cases, there always exists a curve $C'\subset Y$ with the above properties.
\end{ex}

\begin{ex}\label{ex:RestrLines}
          We can specialize the previous example to the case in which $C'\subset Y$ is a line $l$ which does not intersect $l_0$, namely $d=1$.
          In such a case, we have $F_d\cong I_l$ and
          \begin{equation}\label{eqn:ideal_recta}
	  \Xi_3(\I_{l}) \cong j_*(\E |_{l})\otimes\O_{\PP^2}(h).
	 \end{equation}
	
	 Moreover, we have an isomorphism as $\O_{\PP^2}$-modules
	 \[
	  \Xi_3(\I_{l}) \cong \O_l\oplus\O_l(-h).
	 \]
	 Indeed, by \eqref{eqn:classXiI}, the Chern character of $\Xi_3(\I_l)$ as $\O_{\PP^2}$-module is
	 \[
	  \ch(\Xi_3(\I_l))=\left(0,2,-2\right),
	 \]
	 Therefore, $\Xi_3(\I_l)\cong\O_l(a)\oplus\O_l(-1-a)$, for $a\in\ZZ_{\geq0}$.
	 Since, by \cite[Lemma 4.8]{BMMS}, we have
	 \[
	  0 = \Hom_{\Db(\PP^2,\B_0)}(\B_0(h),\Xi_3(\I_l)) = \Hom_{\PP^2}(\O_{\PP^2}, \O_l(a-1)\oplus\O_l(-2-a)),
	 \]
	 we deduce that $a=0$, as we wanted.
\end{ex}

It is a standard fact (it follows, e.g., as in \cite[Example 9.5]{BM:projectivity}) that the assignment
\begin{equation*}
	\varUpsilon: \MMM_d \to \abs{\O_{\PP^2}(d)} \qquad \qquad G\mapsto \supp\Forg (G).
\end{equation*}
extends to a morphism which is well-defined everywhere.
Theorem \ref{thm:mainB0} becomes then the following statement:

\begin{thm}\label{thm:Md}
	The moduli space $\MMM_d$ is irreducible and, for a general smooth curve $C\in \abs{\O_{\PP^2}(d)}$, we have
	\begin{equation*}
		\varUpsilon^{-1}(C)\cong \bigsqcup_{2^{5d-1}} JC,
	\end{equation*}
where $JC=\set{L\in \Pic(C) \st L \text{ is algebraically equivalent to }\O_C}$ is the Jacobian of $C$. Moreover, the stable locus $\MMM^s_d$ is smooth of dimension $d^2+1$.
\end{thm}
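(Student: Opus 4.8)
The plan is to study the morphism $\varUpsilon$ fibrewise and then glue, using throughout the identification of $\coh(\PP^2,\B_0)$ with modules over the Azumaya algebra $\A_0$ on the root stack $\widehat{\PP^2}$ furnished by Proposition \ref{prop:AzumayaStack}; here $\widehat{S}_2=\emptyset$, so $\A_0$ is everywhere Azumaya of degree $2$. The soft part is the smoothness and dimension of $\MMM^s_d$. For $[G]\in\MMM^s_d$ the sheaf $G$ is stable, hence simple, so $\hom(G,G)=1$, while Lemma \ref{lem:Columbus} gives $\Ext^2(G,G)=0$. Deformation theory then shows $\MMM^s_d$ is smooth at $[G]$ with tangent space $\Ext^1(G,G)$ and no obstructions, so the local dimension equals $\mathrm{ext}^1(G,G)=1-\chi(G,G)$. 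Feeding $\ch(G)=(0,2d,-2d)$ into \eqref{eqn:chiB0} yields $\chi(G,G)=-\tfrac14 c_1^2=-d^2$, whence $\MMM^s_d$ is smooth of pure dimension $d^2+1$. Because this is independent of the point, irreducibility of $\MMM_d$ reduces to connectedness of $\MMM^s_d$.

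Next I would describe the fibre over a general smooth curve $C\in\abs{\O_{\PP^2}(d)}$, which meets the discriminant quintic $S_1\subset\PP^2$ transversally in $5d$ points. A semistable $G$ with $\varUpsilon(G)=C$ has $\Forg(G)$ pure of dimension one, supported on $C$, of rank $2$ and Euler characteristic $d$ by Riemann--Roch; since $C$ is smooth, $\Forg(G)$ is automatically locally free of rank $2$, and for general $C$ the object $G$ is stable. Transporting $G$ to an $\A_0$-module and restricting to the stacky curve over $C$, the Azumaya structure identifies such $G$ with a twisted line bundle there. The continuous part of this datum is a torsor under $\Pic^0\cong JC$, while the discrete part records, at each of the $5d$ degenerate conic fibres over $C\cap S_1$, a binary choice of ruling; the Azumaya/Brauer constraint (equivalently, the overall scaling identification) cuts the naive $2^{5d}$ choices down to $2^{5d-1}$. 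This gives $\varUpsilon^{-1}(C)\cong\bigsqcup_{2^{5d-1}}JC$, with generic fibre dimension $g(C)$ consistent with $\dim\abs{\O_{\PP^2}(d)}+g(C)=d^2+1$. Nonemptiness over all of $\abs{\O_{\PP^2}(d)}$, and hence surjectivity of $\varUpsilon$ onto an irreducible base, is guaranteed by the explicit objects of Example \ref{ex:objMd}.

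Finally, and this is where the real work lies, I would prove irreducibility of $\MMM_d$. By the smoothness above it suffices to show $\MMM^s_d$ is connected, and since each generic fibre of $\varUpsilon$ is a disjoint union of the irreducible abelian varieties $JC$ over the irreducible base, connectedness amounts to transitivity of the monodromy of the family on the set of $2^{5d-1}$ components. As $C$ varies in the open locus of smooth curves transversal to $S_1$, the $5d$ points $C\cap S_1$ move and braid, inducing a monodromy action on the $(\ZZ/2)^{5d-1}$-torsor of component labels. The cleanest route is to produce enough local monodromy transformations---half-twists exchanging two nearby branch points, arising by degenerating $C$ to a curve simply tangent to $S_1$---to generate a transitive group, thereby fusing all $2^{5d-1}$ copies of $JC$ into a single irreducible component.

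I expect the monodromy/connectedness step to be the main obstacle: the fibrewise picture is genuinely disconnected, so the entire irreducibility statement hinges on a precise braid-monodromy computation showing the labels are permuted transitively. One must also verify that the strictly semistable locus and the locus where $\Forg(G)$ fails to be locally free (over singular support curves) lie in codimension at least one and hence, given the purity of the smooth part $\MMM^s_d$, cannot disconnect the total space.
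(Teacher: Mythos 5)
Your Steps for smoothness and for the fibre of $\varUpsilon$ are correct and essentially identical to the paper's: Lemma \ref{lem:Columbus} kills $\Ext^2$, formula \eqref{eqn:chiB0} gives $\chi(G,G)=-d^2$, and the root-stack/Azumaya picture of Proposition \ref{prop:AzumayaStack} identifies $\varUpsilon^{-1}(C)$ with line bundles $\psi^*L\otimes\O_{\widehat C}\bigl(\sum_{i\in I}\tfrac12 p_i\bigr)$ subject to $2\deg L+\abs{I}=0$, i.e.\ with $\bigsqcup_{I \text{ even}}\Pic^{-\abs{I}}(C)$, whence the count $2^{5d-1}$. The genuine gap is the irreducibility step, which you yourself flag as the main obstacle and leave as a plan rather than a proof. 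Worse, the plan as stated would fail: the components of the fibre are labelled by even subsets $I\subseteq C\cap\Delta$, and any monodromy that merely permutes the $5d$ intersection points (your ``half-twists exchanging two nearby branch points'') preserves the cardinality $\abs{I}$, so the permutation action is \emph{never} transitive on the $2^{5d-1}$ labels. Transitivity, which does hold, requires extra monodromy of translation type (symmetric difference by $2$-element subsets), which arises only because the trivializing bundle $E_{C,0}$ of the Azumaya algebra cannot be chosen globally as $C$ varies; identifying and computing these elements is the real content, and it is absent. In addition, even granting transitivity over the open locus $U$ of smooth transverse curves, your reduction of irreducibility of $\MMM_d$ to connectedness of $\varUpsilon^{-1}(U)$ needs two density statements that are not established: every component of $\MMM^s_d$ must meet $\varUpsilon^{-1}(U)$ (purity of dimension does not prevent a component from lying entirely over the discriminant, where fibres can be large), and every component of $\MMM_d$ must contain stable points; your closing codimension remark assumes rather than proves both.

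For comparison, the paper avoids monodromy entirely. It proves connectedness by induction on $d$: the strictly semistable locus is the union of the images of the direct-sum maps $\MMM_{d_1}\times\MMM_{d_2}\to\MMM_d$, hence connected by induction starting from Proposition \ref{prop:d=1}; a Mukai-type Chern class argument (following Kaledin--Lehn--Sorger) excludes a connected component consisting purely of stable sheaves; and irreducibility then follows from connectedness plus \emph{normality} of $\MMM_d$, which is obtained from the vanishing $\Ext^2(G,G)=0$ (trivial Kuranishi map) and the GIT construction. If you want to salvage your approach, the monodromy computation and the two density statements all have to be supplied; the paper's route shows they can be bypassed.
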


Before proceeding with the general proof which is carried out in the next section, we examine the easy case $d=1$:

\begin{prop}\label{prop:d=1}
	The moduli space $\MMM_1=\MMM^s_1$ is isomorphic to the Fano surface of lines $F(Y)$ blown-up at the line $l_0$.
	In particular, $\MMM_1$ is smooth and irreducible.
\end{prop}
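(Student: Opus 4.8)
The plan is to identify $\MMM_1$ explicitly by combining the classification of objects with numerical class $[\B_1]-[\B_0]$ with the functor $\Xi_3$. First I would observe that, by Remark \ref{rmk:numB0}(ii), the ideal sheaves $\I_l$ of lines $l\subseteq Y$ all satisfy $[\Xi_3(\I_l)]=[\B_1]-[\B_0]$ and $\ch(\Xi_3(\I_l))=(0,2,-2)$, so each $\Xi_3(\I_l)$ is a natural candidate for a point of $\MMM_1$. Example \ref{ex:RestrLines} shows that, for lines $l$ disjoint from $l_0$, the object $\Xi_3(\I_l)$ is genuinely a torsion sheaf (indeed $\Forg(\Xi_3(\I_l))\cong\O_l\oplus\O_l(-h)$ as an $\O_{\PP^2}$-module), pure of dimension $1$, and that $\Xi_3(\I_l)\in\MMM_1^s$. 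Thus $\Xi_3$ sends the open locus of $F(Y)$ parametrizing lines disjoint from $l_0$ into $\MMM_1$.

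The core of the argument is to promote this set-theoretic correspondence into an isomorphism of moduli spaces, and to understand what happens over the lines of $Y$ that \emph{do} meet $l_0$ (which is where the blow-up appears). The strategy I would follow is:
\begin{itemize}
\item Show first that every $\B_0$-module $G$ with $\ch(G)=(0,2d,-2d)$ and $d=1$ is automatically stable, so $\MMM_1=\MMM_1^s$; this should follow from a slope/reduced-Hilbert-polynomial computation, since a destabilizing subobject would need Chern character forcing $\Forg$ to have rank $1$ support on a line, and one checks no proper such subobject survives the $\B_0$-action. This gives smoothness of $\MMM_1$ via Lemma \ref{lem:Columbus} ($\chi(G,G)\leq 1$, hence $\ext^1(G,G)\leq d^2=1$, matching the expected dimension $d^2+1=2$ once one accounts for the torsion direction).
\item Construct an inverse morphism $\MMM_1\to F(Y)$, using the fully faithful embedding $\Xi_3:\cat{T}_Y\hookrightarrow\Db(\PP^2,\B_0)$ together with the fact (noted after \eqref{eqn:Xi3fold}) that $\Xi_3(\cat{T}_Y)$ is the right orthogonal to $\langle\B_1\rangle$. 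Concretely, given $G\in\MMM_1$, apply $\Xi_3^{-1}$ (after verifying $G\in\langle\B_1\rangle^\perp$) to land back in $\cat{T}_Y$ and recover an ideal sheaf of a line, invoking the stability-preservation results of \cite{BMMS,MS} for ideal sheaves of lines.
\end{itemize}

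The main obstacle, and the reason the blow-up at $l_0$ enters, is the behaviour over lines $l$ meeting $l_0$. For such $l$ the projection $j=\pi|_l$ from $l_0$ is no longer an isomorphism onto its image in $\PP^2$ (the image degenerates), so the clean description of Example \ref{ex:RestrLines} breaks down and $\Xi_3(\I_l)$ need not be a pure sheaf supported on a genuine degree-$1$ curve. I expect that the correct statement is that the locus in $\MMM_1$ lying over such lines, together with the special fiber, reconstitutes $F(Y)$ blown up along the point $[l_0]\in F(Y)$: the exceptional divisor parametrizes the lines through $l_0$ (equivalently, the directions in which a line can degenerate towards $l_0$), which form a $\PP^1$. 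So the hard part will be analyzing this degenerate locus carefully — showing that the rational map $F(Y)\dashrightarrow\MMM_1$ defined by $l\mapsto\Xi_3(\I_l)$ on the disjoint-from-$l_0$ locus extends to the blow-up $\Bl_{l_0}F(Y)$ and becomes an isomorphism there. I would handle this by a local analysis near $l_0$, identifying the exceptional $\PP^1$ with the family of $\B_0$-modules arising as limits of $\Xi_3(\I_l)$ as $l\to l_0$, and checking bijectivity on closed points plus an infinitesimal (tangent-space) comparison using $\Ext^1$ to conclude the map is an isomorphism of smooth surfaces.
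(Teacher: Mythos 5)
Your starting point (the Chern character computation of Remark \ref{rmk:numB0} plus Example \ref{ex:RestrLines}) and the reduction $\MMM_1=\MMM_1^s$ are sound, but the heart of the argument rests on a misidentification of where the correspondence $l\mapsto\Xi_3(\I_l)$ breaks down, and this creates a genuine gap. The correspondence does \emph{not} degenerate over the lines meeting $l_0$: for every line $l\neq l_0$, including those intersecting $l_0$ (which form an Abel--Prym curve in $F(Y)$, not a $\PP^1$), the object $\Xi_3(\I_l)$ is still a Gieseker-stable pure sheaf in $\NNN_1=\gen{\B_1}^\perp\cap\MMM_1$, supported on a line in $\PP^2$ determined by the tangent direction at the intersection point (see the remark following Theorem \ref{thm:Md}). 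The only degenerate object is $\Xi_3(\I_{l_0})$ itself, which by \cite[Ex.~2.11]{BMMS} is a genuine two-term complex --- an extension of $\B_1$ by $\B_0[1]$ in $\cat{A}$ --- and not a sheaf; so the rational map $F(Y)\dashrightarrow\MMM_1$ is undefined precisely at the single point $[l_0]$, which is why the blow-up is at a point.

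The more serious problem is your inversion step: ``given $G\in\MMM_1$, apply $\Xi_3^{-1}$ after verifying $G\in\gen{\B_1}^\perp$.'' That verification fails exactly where it matters. The locus $\MMM_1\setminus\NNN_1$ is non-empty: it is the $\PP^1=\PP\left(\Hom(\B_0,\B_1)\right)$ of cokernels of injections $\B_0\hookrightarrow\B_1$, i.e.\ of sheaves sitting in \eqref{eqn:extdest}; and since $\Xi_3(\cat{T}_Y)$ is exactly the right orthogonal of $\gen{\B_1}$ (see the discussion after \eqref{eqn:Xi3fold}), these objects are not in the image of $\Xi_3$ at all, so no analysis via $\Xi_3^{-1}$, local or otherwise, can reach them. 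Identifying this $\PP^1$ as the exceptional divisor of a blow-down $\MMM_1\to F(Y)$ is precisely what the paper's proof supplies and your proposal lacks: Lemma \ref{lem:stabM1} (proved as in \cite[Lemma 5.7]{MS}) gives the dichotomy --- objects of $\gen{\B_1}^\perp$ are the $\Xi_3(\I_l)$, $l\neq l_0$, and remain $\sigma_m$-stable for all $m>\frac{1}{4}$, while objects outside $\gen{\B_1}^\perp$ become strictly $\sigma_{m_0}$-semistable at the wall $m_0=\frac{\sqrt{5}}{8}$ with JH factors $\B_1$ and $\B_0[1]$ --- and crossing the wall replaces the whole $\PP^1$ of such extensions by the single $\sigma_m$-stable object $\Xi_3(\I_{l_0})$, $m\in(m_0-\eps,m_0)$; the wall-crossing argument of \cite[Sect.~5.2]{MS} then identifies the resulting contraction as the blow-up. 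Without this mechanism (or a concrete substitute, e.g.\ a flat family over $\Bl_{l_0}F(Y)$ realizing the extensions \eqref{eqn:extdest} as limits of the $\Xi_3(\I_l)$), your proposed ``bijectivity plus tangent-space comparison'' has no map to act on. A minor point: your dimension count is off --- by Lemma \ref{lem:Columbus} one has $\Ext^2(G,G)=0$ and $\chi(G,G)=-1$, so $\mathrm{ext}^1(G,G)=2$ (not $\leq 1$), giving smoothness of dimension $d^2+1=2$.
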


To prove Proposition \ref{prop:d=1}, we use wall-crossing techniques from \cite{BM:projectivity} for the family of Bridgeland stability conditions $\sigma_m$ of Lemma \ref{lem:exstab}.
The precise result we need is the following lemma, whose proof is exactly the same as \cite[Lemma 5.7]{MS}.

\begin{lem} \label{lem:stabM1}
Let $F\in \MMM_1$.

{\rm (i)} If $F\in\gen{\B_1}^\perp$ then $F$ is $\sigma_m$-stable for all $m>\frac{1}{4}$.
Moreover, in this case $F\in\cat{A}\cap\Xi_3(\cat{T}_Y)$.
By \cite[Thm.~4.1]{BMMS}, $F\cong\Xi_3(\I_l)$, for some line $l\neq l_0$ in $Y$.

{\rm (ii)} If $F\not\in\gen{\B_1}^\perp$, then $F$ sits in a short exact sequence
\begin{equation}\label{eqn:extdest}
	0\to\B_0\to\B_1\to F\to 0,
\end{equation}
and $F$ becomes $\sigma_m$-semistable for $m=\frac{\sqrt{5}}{8}$ with Jordan--H\"older filtration
\begin{equation*}
\B_1\to F\to \B_0[1].
\end{equation*}
\end{lem}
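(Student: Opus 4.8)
The plan is to run the wall-crossing analysis for the one-parameter family of Bridgeland stability conditions $\sigma_m=(Z_m,\cat{A})$ on the class $[F]=[\B_1]-[\B_0]$. Since $F$ is a pure one-dimensional sheaf it lies in the torsion part of $\cat{A}$, and by \eqref{eqn:Z} one has $Z_m([F])=2m\sqrt{-1}$, so $\phi_m(F)=\tfrac12$ for every $m>\tfrac14$. The first step is to record the numerics of the two $\sigma_m$-stable objects $\B_1$ and $\B_0[1]$ (Remark \ref{rmk:Bi_sigstab}): from \eqref{eqn:Z}, $\mathrm{Re}(Z_m([\B_1]))=4m^2-\tfrac{5}{16}=-\,\mathrm{Re}(Z_m([\B_0[1]]))$, which vanishes precisely at $m_0:=\tfrac{\sqrt{5}}{8}$. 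Hence $\phi_m(\B_1)<\tfrac12<\phi_m(\B_0[1])$ for $m>m_0$, the inequalities reverse for $\tfrac14<m<m_0$, and all three phases coincide at $m=m_0$. This already singles out $m_0$ as the only candidate wall for $[F]$.

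Next I would establish the homological dichotomy underlying the two cases. As $\ch(F)=(0,2,-2)$, Remark \ref{rmk:numB0}(v) gives $\chi(\B_1,F)=0$, while Serre duality (Remark \ref{rmk:numB0}(vi)) yields $\Ext^2(\B_1,F)=\Hom(F,\B_1\otimes_{\B_0}\B_{-1})^\vee=0$, as $F$ is torsion and $\B_1\otimes_{\B_0}\B_{-1}$ is torsion-free. Therefore $\mathrm{hom}(\B_1,F)=\mathrm{ext}^1(\B_1,F)$, and $F\in\gen{\B_1}^\perp$ if and only if $\Hom(\B_1,F)=0$.

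The technical core, which I expect to be the main obstacle, is to show that the only $\sigma_m$-stable objects that can occur as Jordan--H\"older factors of a phase-$\tfrac12$ object of class $[\B_1]+[\B_0[1]]$ are $\B_1$ and $\B_0[1]$ themselves. Writing a putative stable factor as $x[\B_{-1}]+y[\B_0]+z[\B_1]$, the requirement $0<\mathrm{Im}(Z_m)\leq\mathrm{Im}(Z_m([F]))=2m$ forces the integer $-3x-y+z\in\{1,2\}$; imposing $\mathrm{Re}(Z_m)=0$ at a wall together with the Bogomolov-type bound $\chi(G,G)\leq1$ from Lemma \ref{lem:Columbus} then leaves only finitely many numerical classes, which one checks are realized by $\sigma_m$-stable objects only for $[\B_1]$ and $[\B_0[1]]$, and only at $m=m_0$. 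This computation, carried out exactly as in \cite[Lemma~5.7]{MS}, simultaneously shows that $m_0$ is the unique wall in $(\tfrac14,\infty)$. Combining it with the fact that Gieseker stability is recovered in the large-volume limit $m\gg0$, every $F\in\MMM_1$ is $\sigma_m$-semistable for all $m\geq m_0$; in particular $F$ is $\sigma_{m_0}$-semistable with Jordan--H\"older factors among $\{\B_1,\B_0[1]\}$.

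Granting this, the two statements follow formally. If $\Hom(\B_1,F)=0$, then $\B_1$ is not a subobject of $F$ in $\cat{A}$, and neither is $\B_0[1]$, since $\Hom_{\Db(\PP^2,\B_0)}(\B_0[1],F)=\mathrm{ext}^{-1}(\B_0,F)=0$ for sheaves; as every $\sigma_{m_0}$-stable subobject of $F$ of phase $\tfrac12$ must be one of these, $F$ has no proper destabilizing subobject and is $\sigma_{m_0}$-stable. Stability being an open condition and $m_0$ the only wall, $F$ is then $\sigma_m$-stable for all $m>\tfrac14$; moreover $F\in\cat{A}\cap\gen{\B_1}^\perp=\cat{A}\cap\Xi_3(\cat{T}_Y)$, and since $F$ is a genuine sheaf of class \eqref{eqn:classXiI}, \cite[Thm.~4.1]{BMMS} identifies $F\cong\Xi_3(\I_l)$ for a line $l\neq l_0$. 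If instead $\Hom(\B_1,F)\neq0$, a nonzero map $\B_1\to F$ is a monomorphism in $\cat{A}$ (its source being $\sigma_{m_0}$-stable and $F$ semistable of equal phase), with quotient of class $[F]-[\B_1]=[\B_0[1]]$, hence isomorphic to $\B_0[1]$ by the classification. This gives a short exact sequence $0\to\B_1\to F\to\B_0[1]\to0$ in $\cat{A}$; rotating the corresponding triangle and using that every morphism $\B_0\to\B_1$ is injective (Remark \ref{rmk:Bistable}) recovers the sheaf sequence \eqref{eqn:extdest}, and the Jordan--H\"older filtration of $F$ at $m=m_0$ is $\B_1\to F\to\B_0[1]$.
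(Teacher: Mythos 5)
Your proposal is correct and takes essentially the same route as the paper, whose own proof is simply the remark that the argument is identical to \cite[Lemma 5.7]{MS}: the same wall-crossing computation you sketch (constraining $\mathrm{Im}\,Z_m$ of a destabilizer, forcing rank $4$ via the Bogomolov-type bound of Lemma \ref{lem:Columbus}, and identifying $\B_1$ and $\B_0[1]$ as the only stable factors at the unique wall $m_0=\tfrac{\sqrt{5}}{8}$) is exactly what the paper later carries out in detail for $d=2$ in Lemma \ref{lem:stabM2}. Your homological dichotomy via $\chi(\B_1,F)=0$ and Serre duality, and the reconstruction of the sheaf sequence \eqref{eqn:extdest} by rotating the triangle, likewise match the paper's reasoning.
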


By \cite[Example 2.11]{BMMS}, the object $\Xi_3(\I_{l_0})$ sits in the distinguished triangle
\begin{equation*}
\B_0[1]\to \Xi_3(\I_{l_0})\to\B_1,
\end{equation*}
which is the Harder--Narasimhan filtration of $\Xi_3(\I_{l_0})$ for $m > m_0:=\frac{\sqrt{5}}{8}$.
Thus, all such extensions \eqref{eqn:extdest} get contracted to $\Xi_3(\I_{l_0})$ which is indeed $\sigma_m$-stable for $m\in (m_0-\eps , m_0 )$.
The wall-crossing phenomenon described in \cite[Sect.\ 5.2]{MS} carries over and this proves Proposition \ref{prop:d=1}.

\subsection{Proof of Theorem \ref{thm:Md}}\label{subsec:proof}
	The argument is divided into various steps.

	\bigskip

\noindent {\bf Step 1:} {\em Deformation theory.}
	For any $G\in \MMM_d$, we have $\chi(G,G)=-d^2$.
	Hence, to prove that $\MMM^s_d$ is smooth of dimension $d^2+1$ it is enough to show that, it is non-empty and
	\begin{equation*}
	 \HomDB^2(G,G)=0,
	\end{equation*}
	for any $G\in \MMM_d$. The fact that $\MMM^s_d$ is non-empty is a consequence of the next step.
	The vanishing of $\HomDB^2(G,G)$ follows directly from Lemma \ref{lem:Columbus}.

	\bigskip

\noindent {\bf Step 2:} {\em Fibres of $\varUpsilon:\MMM_d\to \abs{\O_{\PP^2}(d)}$.}
	We claim that for a smooth curve $C\in \abs{\O_{\PP^2}(d)}$,
	\begin{equation}\label{eqn:fibr}
	\varUpsilon^{-1}(C)\cong \bigsqcup_{2^{5d-1}} \Pic^0(C).
	\end{equation}

	Recall that the conic fibration $\pi$ degenerates along a smooth quintic $\Delta\subset\PP^2$. We denote $\res{\Delta}{C}$ by $\sum_{i=1}^{5d} p_i$ (the points are possibly non-distinct) and,
	abusing notation, we set $\tfrac{1}{2}p_i$ to be the the section in $\widehat C$ corresponding to the $2^{\rm nd}$-root of $p_i$.
	As in Proposition \ref{prop:AzumayaStack}, we can consider the stack $\widehat{\PP^2}$ over $\PP^2$ of $2^{\rm nd}$ roots of $\O_{\PP^2}(\Delta)$ along the section $\Delta$.
	We denote by $\psi: \widehat{\PP^2} \to \PP^2$ the natural projection.
	We then have an equivalence of abelian categories
	\begin{equation*}\label{eqn:hatPP2}
	 \psi_*:\cat{Coh}(\widehat{\PP^2}, \A_0)\to \cat{Coh}(\PP^2,\B_0).
	\end{equation*}

	Given a smooth curve $C\subset \PP^2$ we can restrict this construction to $\psi:\widehat C\to C$ where $\widehat C$ is a twisted curve (stack of $2^{\rm nd}$ roots of $(C,\res{\Delta}{C})$).
	The restriction $\res{\A_0}{\widehat C}$ is a sheaf of (trivial) Azumaya algebras, i.e., there exists a vector bundle of rank 2, $E_{C,0}\in \cat{Coh}(\widehat{C})$, such that $\res{\A_0}{\widehat C}=\mathcal{E}nd(E_{C,0})$ (see, for example, \cite[Cor.~3.16]{Kuz:Quadric}) and
	\begin{equation*}\label{eqn:equivalence}
	 \left.\begin{array}{rcccl}
	 {\cat{Coh}(\widehat C)}&\stackrel{\sim}\lra &{\cat{Coh}(\widehat C,\res{\A_0}{\widehat C})} &\stackrel{\sim}\lra &{\cat{Coh}(C,\res{\B_0}{C})}\\
	 {G}&\longmapsto& {G\otimes E_{C,0}^\vee}&\longmapsto& {\psi_{*}(G\otimes E_{C,0}^\vee)}
	 \end{array}\right.
	\end{equation*}
	is an equivalence of categories. In particular,
	\begin{equation*}\label{eqn:e}
	\res{\B_0}{C}=\psi_*(\cEnd(E_{C,0})).
	\end{equation*}
	Moreover, there certainly exists $M\in\Pic(\widehat C)$ such that $\ch_2(\psi_*(E_{C,0}^\vee\otimes M))=-2d$ as an $\O_{\PP^2}$-module.
	Thus, $\psi_*(E_{C,0}^\vee\otimes M)\in{\MMM}_d$.
	Since $E_{C,0}$ is determined up to tensorization by line bundles, we can assume directly that $\psi_*E_{C,0}^\vee\in {\MMM}_d$.

	As $E_{C,0}$ is a rank two vector bundle on $\widehat C$, it is clear that the fiber of $\varUpsilon$ over the smooth curve $C$ consists of line bundles on $\widehat{C}$.
	By \cite[Cor.~3.1.2]{Ca}, an invertible sheaf on $\widehat C$ is of the form $\psi^*L\otimes \O_{\widehat C}\Big(\sum_{i=1}^{5d} \tfrac{\lambda_i}{2} p_i\Big)$, where $L\in \Pic(C)$ and $\lambda_i\in\set{0,1}$.
	On the other hand, as $E_{C,0}^\vee$ has rank $2$, we have
	\begin{equation*}
		\ch_2\Big(\psi_*\Big(E_{C,0}^\vee\otimes \psi^*L\otimes \O_{\widehat C}\Big(\sum_{i=1}^{5d} \tfrac{\lambda_i}{2} p_{i}\Big)\Big)\Big)=\ch_2\Big(\psi_*(E_{C,0}^\vee)\Big),
	\end{equation*}
	as objects in $\Db(\PP^2)$ with $L\in\Pic(C)$ and $\lambda_i\in\{0,1\}$, if and only if $2 \deg L+\sum_{i=1}^{5d}\lambda_i =0$.

	Let $J$ be the set of all subsets of $\set{1,\ldots,5d}$ of even cardinality and, for $I\in J$, set $\tau_I$ to be the cardinality of $I$.
	Then the above discussion can be rewritten as
	\begin{equation*}
		\varUpsilon^{-1}(C)=\bigsqcup_{I\in J}\left\{\psi_*\Big(E_{C,0}^\vee\otimes \psi^*L\otimes \O_{\widehat C}\Big(\sum_{i\in I} \tfrac{1}{2} p_{i}\Big)\Big):L\in\Pic^{-\tau_I}(C)\right\}.
	\end{equation*}
	Hence
	\begin{equation*}
		\varUpsilon^{-1}(C)\cong\bigsqcup_{I\in J}\Pic^{-\tau_I}(C),
	\end{equation*}
	which is precisely \eqref{eqn:fibr}, because $J$ has cardinality $2^{5d-1}$.

	\bigskip

\noindent {\bf Step 3:} {\em $\MMM_d$ is irreducible.}
	To prove the irreducibility of $\MMM_d$, we follow the same strategy as in \cite{KLS}.
	We first prove that $\MMM_d$ is connected by simply following the same argument as in the proof of \cite[Thm.\ 4.4]{KLS}.
	Indeed, by Proposition \ref{prop:d=1}, we know that $\MMM_1$ is connected.
	Now if $1\leq d_1\leq d_2<d$ and $d_1+d_2=d$, we have the natural maps $\varphi_{d_1,d_2}:\MMM_{d_1}\times \MMM_{d_2}\to \MMM_d$ sending the pair $(E_1,E_2)$ to $E_1\oplus E_2$.
	Their images coincide with the semistable locus of $\MMM_d$, which is then connected by induction.
	The existence of a connected component in $\MMM_d$ consisting of purely stable objects can be excluded by using an argument of Mukai as in the proof of \cite[Thm.\ 4.1]{KLS}. For the convenience of the reader, let us outline here some details.

	The aim is to show that, if there exists a connected component $X$ of the moduli space $\MMM_d$ consisting only of stable sheaves, then $X\cong \MMM_d$. Take a point $F\in X$ and assume there is $G\in\MMM_d\setminus X$.
	The arguments in \cite[Lemma 4.2]{KLS} and in \cite[\S 4.3]{KLS} show that we can essentially assume that there is a universal family $\F\in\coh(X\times\MMM_d)$ with two projections $p:X\times\MMM_d\to X$ and $q:X\times\MMM_d\to\MMM_d$.
	By Lemma \ref{lem:Columbus}, we have that $\HomDB^2(G',F')=0$, for all $G',F'\in\MMM_d$.
	Hence a computation of local Exts shows that
	\[
	\Ext^0_p(q^*G,\F)=\Ext^2_p(q^*G,\F)=0,
	\]
	while $\Ext_p^1(q^*G,\F)$ is locally free of rank $d^2<1+d^2=\dim(X)$ (the last equality follows again from Step 1).
	If we replace $G$ by $F$, we get a complex of $\O_X$-modules
	\begin{equation}\label{eqn:Mukai1}
		\xymatrix{
	0\ar[r]& A_0\ar[r]^-{\alpha}& A_1\ar[r]^-{\beta}&A_2\ar[r]& 0
	}
	\end{equation}
	such that $H^i(A^\bullet)\cong\Ext^i_p(q^*F,\F)$ (more generally, this holds for any base change $S\to X$).

	It turns out that the point $F\in X$ is the degeneracy locus of the map $\alpha$ (see \cite[Lemma 4.3]{KLS}).
	Thus, blowing up $X$ at $F$, we get $f:Z\to X$ providing, as in \eqref{eqn:Mukai1}, a new complex of $\O_Z$-modules
	\begin{equation*}\label{eqn:Mukai2}
		\xymatrix{
	A'_0\ar[r]^-{\alpha'}& A'_1\ar[r]^-{\beta'}&A'_2
	}
	\end{equation*}
	with an inclusion $f^*A_0\subseteq A'_0$.
	Let $D$ be the exceptional divisor in $Z$ and let $W'$ be the middle cohomology of
	\begin{equation*}\label{eqn:Mukai3}
		\xymatrix{
	0\ar[r]&A'_0\ar[r]& f^*A_1\ar[r]&A'_2\ar[r]&0
	}
	\end{equation*}
	If $M:=A_0'/f^*A_0$, then, by the same computations of the Chern classes as in \cite[\S 4.4]{KLS}, we get $c(W')=f^*c(W)\cdot c(-M)$.
	As the rank of $W$ and $W'$ are smaller than $\dim(X)$, one gets a contradiction as $c_{\dim(X)}(W')=0$ while, using that $M\cong\O_D(D)$, one shows that the component in degree $\dim(X)$ of $f^*c(W)\cdot c(-M)$ is not trivial.

	Hence, to conclude that $\MMM_d$ is irreducible, it is enough to show that it is normal.
	Since, by Lemma \ref{lem:Columbus}, $\HomDB^2(G,G)=0$ for all $G\in \MMM_d$, the Kuranishi map is trivial.
	Following then the notation of \cite[\S 2.7]{KLS}, the quadratic part $\mu$ of the Kuranishi map is also trivial and the null-fibre $F=\mu^{-1}(0)$ coincides with $\HomDB^1(G,G)$, which is obviously normal.
	Then we can apply \cite[Prop.~3.8]{KLS}, i.e., if we consider a slice $S$ of an orbit of a semistable point $[q]$ of the corresponding $\mathrm{Quot}$-space, we have that $\O_{S,[q]}$ is a normal domain.
	Since being normal is an open property, we can use the arguments in the proof of \cite[Prop.~3.11]{KLS} to prove that the locus $R^{ss}$ of semistable points of the $\mathrm{Quot}$-space is normal.
	A GIT-quotient of a normal scheme is normal.
	Hence, $\MMM_d$ is normal, since it is a GIT-quotient of $R^{ss}$.
	
\bigskip

This finally concludes the proof of Theorem \ref{thm:Md}.

\begin{rem}
	When $d=1$, the map $\varUpsilon$ has a very natural and well-known geometrical interpretation.
	In fact, given a $\B_0$-module $F$ supported on a general line $l\subset \PP^2$, we can consider all the lines $l'$ in $Y$ such that $\Xi_3(\I_{l'})\cong F$.
	By Proposition \ref{prop:d=1}, we have to count the number of lines $l'$ that map to $l$ via the projection from $l_0$ (where the lines that intersect $l_0$ are mapped to the projection of the tangent space of the intersection point).
	The lines that intersect $l_0$ form an Abel--Prym curve in $F(Y)$, so they do not dominate $\abs{\O_{\PP^2}(1)}$.
	Hence, we need only to count the skew lines to $l_0$ that map to $l$.
	The preimage of $l$ via the projection is a cubic surface, so it contains 27 lines.
	The line $l$ intersects the degeneration quintic $\Delta$ in $5$ points, which give us $5$ coplanar pairs of lines intersecting $l_0$.
	Hence we have $27-10-1=2^4$ lines skew from $l_0$ that project to $l$.
	Indeed, if $\Bl_{l_0}F(Y)$ is the blow-up of $F(Y)$ along $l_0$, we have a finite morphism $\Bl_{l_0}F(Y)\to \abs{\O_{\PP^2}(1)}$ which is $2^4:1$ (see, e.g., \cite[Proof of Thm.~4]{be2}).
\end{rem}

For applications to stable sheaves on cubic threefolds, as in Lemma \ref{lem:stabM1}, we consider the subset
\begin{equation*}\label{eqn:defNd}
	\NNN_d:=\gen{\B_1}^{\perp}\cap \MMM_d.
\end{equation*}

\begin{lem}\label{lem:NdWellDefined}
The subset $\NNN_d$ is well-defined, namely it does not depend on the chosen representative in the S-equivalence class.
\end{lem}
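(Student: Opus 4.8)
The plan is to reduce the statement to a property of the Jordan--H\"older (JH) factors. A point of $\MMM_d$ is an $S$-equivalence class, i.e.\ an unordered collection of JH factors with multiplicities, so any two representatives $F,F'$ of the same point share the same JH factors $G_1,\dots,G_k$. Hence it suffices to prove the intrinsic characterization: a semistable $\B_0$-module $F$ with $\ch(F)=(0,2d,-2d)$ lies in $\gen{\B_1}^\perp$ \emph{if and only if} each of its JH factors $G_i$ lies in $\gen{\B_1}^\perp$. Since the right-hand condition involves only the $G_i$, it is manifestly constant on each $S$-equivalence class, and this gives the well-definedness of $\NNN_d=\gen{\B_1}^\perp\cap\MMM_d$.

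The first step is a numerical reduction. For every term $F_j$ of a JH filtration $0=F_0\subset\dots\subset F_k=F$ and for every factor $G_i$, the Chern character is proportional to $(0,2,-2)$, since all these sheaves are pure of dimension one with the same slope as $F$; hence by Remark \ref{rmk:numB0}(v) one gets $\chi(\B_1,F_j)=\chi(\B_1,G_i)=0$. Moreover, for any torsion $\B_0$-module $X$ I claim $\Ext^2(\B_1,X)=0$: by Serre duality and Remark \ref{rmk:numB0}(vi) one has $\Ext^2(\B_1,X)\cong\Hom(X,\B_1\otimes_{\B_0}\B_{-1})^\vee$, and the target is a torsion-free $\O_{\PP^2}$-module (it is the image of the torsion-free sheaf $\B_1$ under the rank-preserving autoequivalence $-\otimes_{\B_0}\B_{-1}$), while $X$ is torsion, so this group vanishes. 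Combining $\Ext^2(\B_1,-)=0$ with $\chi(\B_1,-)=0$ shows that, for each of $F$, the $F_j$ and the $G_i$, membership in $\gen{\B_1}^\perp$ is \emph{equivalent} to the single condition $\Hom(\B_1,-)=0$ (equivalently $\Ext^1(\B_1,-)=0$).

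The second step is an induction on $k$. The implication ``all $G_i\in\gen{\B_1}^\perp\Rightarrow F\in\gen{\B_1}^\perp$'' is immediate, as $\gen{\B_1}^\perp$ is closed under extensions. For the converse, assume $F\in\gen{\B_1}^\perp$ and apply $\RHom(\B_1,-)$ to $0\to F_{k-1}\to F\to G_k\to 0$; using $\Ext^2(\B_1,F_{k-1})=0$ the long exact sequence reads
\[
0\to\Hom(\B_1,F_{k-1})\to\Hom(\B_1,F)\to\Hom(\B_1,G_k)\to\Ext^1(\B_1,F_{k-1})\to\Ext^1(\B_1,F)\to\Ext^1(\B_1,G_k)\to 0.
\]
From $\Hom(\B_1,F)=\Ext^1(\B_1,F)=0$ I read off $\Hom(\B_1,F_{k-1})=0$, $\Ext^1(\B_1,G_k)=0$, and $\Hom(\B_1,G_k)\cong\Ext^1(\B_1,F_{k-1})$. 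The vanishing of $\Ext^1(\B_1,G_k)$ together with $\chi(\B_1,G_k)=0$ and $\Ext^2(\B_1,G_k)=0$ forces $\Hom(\B_1,G_k)=0$, so $G_k\in\gen{\B_1}^\perp$; then $\Ext^1(\B_1,F_{k-1})\cong\Hom(\B_1,G_k)=0$ gives $F_{k-1}\in\gen{\B_1}^\perp$, and the inductive hypothesis applied to the shorter filtration of $F_{k-1}$ finishes the argument.

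The delicate point is precisely this converse implication: $\gen{\B_1}^\perp$ is a priori only closed under extensions, not under passing to sub- or quotient objects of the filtration, so one cannot naively restrict the defining vanishing to a single JH factor. The mechanism that rescues the argument is the vanishing $\Ext^2(\B_1,-)=0$ on torsion sheaves, which truncates the long exact sequence enough that the numerical identity $\chi(\B_1,-)=0$ can propagate $\Hom(\B_1,-)=0$ through every step of the filtration. I therefore expect the only genuine verifications to be that $\Ext^2(\B_1,X)$ vanishes for torsion $X$ (which, via Serre duality as above, comes down to torsion-freeness of $\B_1\otimes_{\B_0}\B_{-1}$, a structural fact about the sheaf of Clifford algebras recalled in Section \ref{subsec:quadric}) together with the routine bookkeeping of Chern characters guaranteeing that every subquotient has slope $-1$.
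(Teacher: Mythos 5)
Your proof is correct and takes essentially the same route as the paper's: the paper likewise observes that $\chi(\B_1,A)=0$ (Remark \ref{rmk:numB0}, (v)) and $\Hom^2(\B_1,A)=0$ by Serre duality since $A$ is torsion, so that membership in $\NNN_d$ reduces to the single condition $\Hom(\B_1,-)=0$, and then propagates this vanishing through the Jordan--H\"older filtration via the long exact sequence. You merely spell out the diagram chase and the $\Ext^2$-vanishing that the paper leaves implicit.
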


\begin{proof}
First of all we observe that by Remark \ref{rmk:numB0}, (v), for all $A\in\MMM_d$, we have $\chi(\B_1,A)=0$.
Moreover, by Serre duality, $\Hom^2(\B_1,A)=0$, since $A$ is torsion.
Hence, we have $\Hom(\B_1,A)=0$ if and only if $A\in\NNN_d$.

Let $A\in\MMM_d$ and let $A_1,\ldots,A_m$ be its Jordan-H\"older factors.
It is enough to show the following claim: $A\in\NNN_d$ if and only if $A_i\in\NNN_d$, for all $i$.
As observed in Step 3 of the proof of Theorem \ref{thm:Md}, we have $A_i\in\MMM_{d_i}$, for some $d_i>0$.
The claim follows then directly from the long exact sequence in cohomology (by applying $\Hom(\B_1,-)$ to the Jordan-H\"older filtration of $A$) and by the previous observation.
\end{proof}

By semi-continuity, the condition of belonging to $\NNN_d$ is open in families.
We also have the following result:

\begin{prop}\label{prop:Nd}
	The subset $\NNN_d$ is non-empty and dense in $\MMM_d$.
\end{prop}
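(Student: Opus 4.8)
The plan is to deduce the statement from three ingredients already in place: the irreducibility of $\MMM_d$ (Theorem \ref{thm:Md}), the openness of the condition defining $\NNN_d$, and a single explicit object lying in $\NNN_d$. Granting these, the conclusion is formal: a non-empty open subset of an irreducible scheme is automatically dense, so density follows at once from non-emptiness.

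First I would establish non-emptiness by exhibiting an object, using the construction of Example \ref{ex:objMd}. Suppose $C'\subset Y$ is a smooth rational curve of degree $d$ that is disjoint from $l_0$ and whose projection $\res{j}{C'}$ to $\PP^2$ is birational onto its image. Then Example \ref{ex:objMd} shows that $\Xi_3(F_d)\in\MMM^s_d$ and that $\Xi_3(F_d)\in\gen{\B_1}^\perp$; hence $\Xi_3(F_d)\in\gen{\B_1}^\perp\cap\MMM_d=\NNN_d$. It therefore suffices to produce such a curve for every $d\geq1$. By \cite[Thm.~62]{starr} the smooth rational curves of degree $d$ on $Y$ form a family of dimension $2d$, and the sub-loci of curves that meet the fixed line $l_0$, or whose projection from $l_0$ fails to be birational, are proper closed conditions inside this family; a general member therefore satisfies all the required hypotheses, giving a point of $\NNN_d$.

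Next I would record openness and conclude. By semi-continuity the condition $\Hom(\B_1,-)=0$ is open in families, and by Lemma \ref{lem:NdWellDefined} this condition is compatible with S-equivalence, so $\NNN_d$ is a well-defined open subset of $\MMM_d$. Combining this with the non-emptiness just proved and the irreducibility of $\MMM_d$ from Theorem \ref{thm:Md}, I conclude that $\NNN_d$ is a non-empty open subset of an irreducible scheme, hence dense.

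The main obstacle will be the non-emptiness step, and specifically the guarantee that the genericity constraints of Example \ref{ex:objMd} can be met for every $d$. One must check that requiring the degree-$d$ rational curve to avoid the fixed line $l_0$ does not empty out the $2d$-dimensional family (equivalently, that the curves meeting $l_0$ form a proper subfamily, which should follow from the dimension count as soon as $2d\geq2$), and that birationality of the projection from $l_0$ is a generic rather than a vacuous condition. These are dimension counts of the expected kind; once they are in place the remainder of the argument is purely formal.
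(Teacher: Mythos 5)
Your formal skeleton is correct: $\NNN_d$ is open in $\MMM_d$ (semi-continuity plus Lemma \ref{lem:NdWellDefined}), Theorem \ref{thm:Md} gives irreducibility (and its proof is independent of Proposition \ref{prop:Nd}, so there is no circularity), and a non-empty open subset of an irreducible space is dense. The genuine gap is exactly where you flag it, in the non-emptiness step, and it is not a routine dimension count. Example \ref{ex:objMd} yields a point of $\NNN_d\cap\MMM_d^s$ only after one produces a smooth rational curve $C'\subset Y$ of degree $d$ with $C'\cap l_0=\emptyset$ and with projection from $l_0$ birational onto its image; the paper asserts the existence of such curves only for $d=1,2,3$, and deliberately so. For arbitrary $d$, the reference \cite{starr} quoted in Example \ref{ex:objMd} gives only a $2d$-dimensional family of smooth rational curves of degree $d$; to speak of a ``general member'' having your two properties you would additionally need (i) control of every irreducible component of that family (or its irreducibility, a non-trivial theorem about cubic threefolds that the paper never invokes), and (ii) genuine upper bounds on the dimensions of the bad loci: for curves meeting $l_0$ you need that curves through a fixed point form a subfamily of codimension at least two (deformation theory gives lower, not upper, bounds on such loci), and for curves whose projection is $k:1$, $k\geq 2$, onto a plane curve of degree $d/k$ you need to bound the families of multisections of the conic-bundle surfaces $\pi^{-1}(C)$. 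None of this is supplied, so the key step remains unproven.

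The paper sidesteps exactly this difficulty by induction on $d$, needing geometric input only for $d=1$: the base case is Lemma \ref{lem:stabM1}, and for $d>1$ one picks a stable $A_{d-1}\in\NNN_{d-1}$ and any $A_1\in\NNN_1$, computes $\chi(A_1,A_{d-1})=-(d-1)<0$ from Remark \ref{rmk:numB0}(v), hence $\Ext^1(A_1,A_{d-1})\neq 0$, and checks that a non-split extension $0\to A_{d-1}\to A_d\to A_1\to 0$ is a simple semistable sheaf lying in $\NNN_d$; since $\Ext^2(A_d,A_d)=0$ (Lemma \ref{lem:Columbus}), $A_d$ moves in a maximal-dimensional family, and openness of stability and of the $\NNN_d$-condition then produce a point of $\NNN_d\cap\MMM_d^s$, whence density. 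If you want to keep your more geometric construction, you would have to restrict it to $d\leq 3$ (as the paper does elsewhere) and still supply an argument of this extension/deformation type to reach all $d$.
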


\begin{proof}
This is a well-known general fact.
The proof we give here mimics \cite[Theorem 2.15]{BM:MMP}.
We first recall that, as proved in Step 3 of the proof of Theorem \ref{thm:Md}, by considering the maps $\phi_{d_1,d_2}:\MMM_{d_1}\times\MMM_{d_2}\to\MMM_d$, the subset $\MMM_d^s$ consisting of stable sheaves in $\MMM_d$ is open, non-empty, and dense.

We can now proceed by induction on $d$.
The case $d=1$ is precisely Lemma \ref{lem:stabM1}.

Assume then $d>1$, and let $A_{d-1}\in\NNN_{d-1}$.
Since $\MMM_d^{s}$ is non-empty, we can assume $A_{d-1}$ is stable.
Then we can find $A_1\in\NNN_1$ such that $\Hom(A_{d-1},A_1)=\Hom(A_1,A_{d-1})=0$ (in fact, any $A_1\in\NNN_1$ works, since $A_{d-1}$ is stable).
By Remark \ref{rmk:numB0}, (v), we have
\[
\chi(A_1,A_{d-1}) = (d-1) \chi(A_1,A_1) = - (d-1) < 0.
\]
Hence, $\Ext^1(A_1,A_{d-1})\neq0$.
Consider a non-trivial extension
\[
0 \to A_{d-1} \to A_d \to A_1 \to 0.
\]
Then $A_d\in\NNN_d$ and $\Hom(A_d,A_d)\cong\CC$, namely $A_d$ is a simple sheaf.
Since, by Lemma \ref{lem:Columbus}, $\Ext^2(A_d,A_d)=0$, we can consider a maximal dimensional family of simple sheaves containing $A_d$.
Hence, since both being stable and belonging to $\NNN_d$ is an open property, we have
\[
\emptyset\neq\NNN_d\cap\MMM_d^s\subset\MMM_d^s
\]
is an open subset, and therefore dense.
Since $\MMM_d^s$ is dense in $\MMM_d$, this concludes the proof.
\end{proof}

\subsection{Ulrich bundles}\label{subsec:Ulrich}

We now apply the results on $\B_0$-modules of the previous section to study Ulrich bundles on a cubic threefold $Y$.
The goal is to prove Theorem \ref{thm:main3folds} from the introduction.

\begin{defn}
	An ACM bundle $F$ on $Y$ is called \emph{Ulrich} if the graded module $H^0_*(Y,F):=\bigoplus_{m\in\ZZ}H^0(Y,F(mH))$ has $3\rk(F)$ generators in degree $1$.
\end{defn}

We refer to \cite[Sect.~1]{CH} for the basic properties of Ulrich bundles on projective varieties.
In particular, we recall the following presentation of stable Ulrich bundles due to the Hartshorne--Serre construction.

\begin{lem}\label{lem:Ulr_geom}
	A stable Ulrich bundle $F$ of rank $r$ on a cubic threefold $Y$ admits the following presentation
	\begin{equation}
		0\to \O_Y(-H)^{\oplus r-1}\to F\to \I_C\otimes\O_Y((r-1)H)\to 0,
	\end{equation}
	where $C$ is a smooth connected curve of degree $\frac{3r^2-r}{2}$ and arithmetic genus $r^3-2r^2+1$.
\end{lem}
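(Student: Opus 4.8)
The plan is to apply Serre's construction (Remark~\ref{rmk:Serreconstr}) in reverse, building the Ulrich bundle $F$ as an extension of a twisted ideal sheaf of a curve $C$ by a trivial bundle. The central task is to pin down the numerical invariants of $C$ forced by the Ulrich condition. First I would recall the basic numerics: an Ulrich bundle $F$ of rank $r$ on a cubic threefold has $\ch(F)$ completely determined by the defining property that $H^0_*(Y,F)$ has $3r$ generators in degree $1$ and, being ACM with the normalization chosen, satisfies $h^0(Y,F)=3r$ and $h^0(Y,F(-H))=0$. Standard Ulrich theory (see \cite[Sect.~1]{CH}) gives that the Hilbert polynomial of $F$ equals $3r\binom{m+2}{3}$, from which one reads off $c_1(F)=\tfrac{r}{2}H\cdot(\text{something})$; concretely the normalization yields $c_1(F)=\left(\tfrac{3r}{2}-r\right)H$ after the twist, i.e.\ the determinant of $F$ is $\O_Y((r-1)H)$ on the nose once we arrange the presentation below.

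\smallskip

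The key steps, in order, are as follows. \emph{Step 1:} Establish that a stable Ulrich bundle $F$ of rank $r$ is globally generated after the twist $F$, and more precisely that the evaluation of $r-1$ general global sections of $F$ drops rank in codimension $2$. For this I would use that $F$ is ACM and, by stability together with the Ulrich normalization, $F$ is $0$-regular in the sense of Castelnuovo--Mumford, hence globally generated. \emph{Step 2:} Apply the Hartshorne--Serre correspondence (Remark~\ref{rmk:Serreconstr}) to $r-1$ general sections $s_1,\dots,s_{r-1}$ of $F$: their dependency locus is a locally complete intersection curve $C\subset Y$ of codimension $2$, and one obtains a short exact sequence
\begin{equation*}
0\to \O_Y^{\oplus r-1}\to F\to \I_C\otimes \det F\to 0.
\end{equation*}
Twisting so that $\det F=\O_Y((r-1)H)$ (which is what the Ulrich normalization produces) and reindexing the trivial summands by $\O_Y(-H)$ gives exactly the stated sequence. \emph{Step 3:} Compute the degree and arithmetic genus of $C$ from the sequence by taking Chern characters: comparing $\ch(F)$ with $\ch(\O_Y(-H)^{\oplus r-1})$ and $\ch(\I_C\otimes\O_Y((r-1)H))$ yields $\deg C=\tfrac{3r^2-r}{2}$ from $\ch_2$, and the genus $p_a(C)=r^3-2r^2+1$ from $\ch_3$ together with Riemann--Roch on $Y$.

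\smallskip

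The genus and degree computation in Step~3 is essentially bookkeeping once the extension is in place, so I do not expect it to be the genuine obstacle. The main obstacle will be Step~1--Step~2: verifying that the general dependency locus $C$ is in fact a \emph{smooth connected} curve rather than a singular or disconnected locally complete intersection. Connectedness should follow from $H^0(Y,\I_C\otimes\O_Y((r-1)H))$-considerations combined with $H^1$-vanishing coming from the ACM property (so that $C$ is projectively normal and its structure sheaf has connected $h^0$), while smoothness requires a Bertini-type argument showing that for $r-1$ \emph{general} sections the degeneracy locus is smooth away from any bad locus. The delicate point is that Bertini does not apply directly to degeneracy loci of bundle maps; one needs the global generation from Step~1 to guarantee that the universal dependency locus is smooth and then invoke generic smoothness. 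I would handle this by passing to the incidence variety parametrizing pairs (section tuple, point of $Y$) and checking that the projection is dominant with smooth general fibre, using that $F$ is globally generated and $\dim Y=3$ forces the generic such $C$ to be a smooth curve.
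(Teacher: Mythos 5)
Your proposal is correct and follows essentially the same route as the paper's proof: global generation of $F(H)$ (immediate from the Ulrich definition), the dependency locus of $r-1$ general sections giving a smooth curve via the Hartshorne--Serre construction of Remark \ref{rmk:Serreconstr}, connectedness from $h^1(Y,\I_C)=0$ (an ACM vanishing), and degree/genus by Riemann--Roch. The only slips are numerical: with the paper's normalization $c_1(F)=0$, so $\det F=\O_Y$ (not $\O_Y((r-1)H)$, and certainly not $\tfrac{r}{2}H$); the twist $\O_Y((r-1)H)$ on the quotient arises because the sections are sections of $F(H)$, whose determinant is $\O_Y(rH)$ --- the paper pins down this constant by citing $c_1(F(H))=r$ from \cite[Lemma 2.4(iii)]{CH}.
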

\begin{proof}
	By definition, $F(H)$ is generated by global sections, so $G:=\coker (\O_Y(-H)^{\oplus r-1}\hookrightarrow F)$ is a torsion-free sheaf of rank $1$.
	By choosing the sections appropiately, we get that $G=\I_C\otimes \O_Y(sH)$, where $C\subset Y$ is a smooth curve.
	By \cite[Lemma 2.4(iii)]{CH}, we have $c_1(F(H))=r$, so $s=r-1$.
	Since $h^1(Y,\I_C)=0$, $C$ is connected.
	By \cite[Prop. 3.7]{CH1}, we have that $\deg C=\frac{3r^2-r}{2}$ and by Riemann--Roch we get $p_a(C)=r^3-2r^2+1$.
\end{proof}

From Lemma \ref{lem:Ulr_geom} it is standard to compute the Chern character of an Ulrich bundle $F$ of rank $r$, by using Hirzebruch--Riemann--Roch:
\begin{equation}\label{eqn:chUlr}
	\ch(F)=(r,0,-r\cdot l,0),
\end{equation}
where $l$ denotes the class of a line in $Y$.

\begin{rem}\label{rmk:norm}
	Notice that, in their definition of Ulrich bundles \cite[Def. 2.1]{CH}, Casanellas and Hartshorne impose the generators to be in degree $0$.
	Hence, their Ulrich bundles can be obtained from ours by twisting by $\O_Y(H)$ and viceversa.
	We prefer this normalization, since then Ulrich bundles are \emph{balanced} ACM bundles (recall Definition \ref{def:ACMbalanced}).
	Moreover, with this normalization, instanton bundles of minimal charge (see the forthcoming Definition \ref{def:instantons} and the subsequent comments) are also Ulrich bundles.
\end{rem}

Denote by $\MMM^{sU}_r$ the moduli space of stable Ulrich bundles of rank $r\geq 2$.
It is smooth of dimension $r^2+1$ since for any such bundle $E$, we have $\dim\Ext^1(E,E)=r^2+1$ while $\dim\Ext^2(E,E)=0$.

To prove that $\MMM^{sU}_r$ is non-empty, the strategy is to show the existence of low rank Ulrich bundles ($r=2,3$) and then use a ``standard'' deformation argument \cite[Thm. 5.7]{CH}.
The existence of rank $2$ Ulrich bundles is well-known \cite{Druel,MT}.
They usually appear in the literature as instanton bundles (see the forthcoming Section \ref{sec:modMd2}).
In \cite{CH} the authors construct rank $3$ Ulrich bundles, relying on the existence of an ACM curve on $Y$ of degree $12$ and genus $10$ (see Lemma \ref{lem:Ulr_geom}).
The existence of such curves is proved, using {\tt Macaulay2}, by Gei{\ss} and Schreyer in the appendix and only for a generic cubic threefold.

Our approach to construct Ulrich bundles of rank $3$ is different (for completeness we also construct rank $2$ Ulrich bundles).
In particular, we do not use the Hartshorne--Serre construction (see Lemma \ref{lem:Ulr_geom}), but the structure of conic fibration of a blow-up of $Y$.
We have computed the image in $\Db(\PP^2,\B_0)$ of the ideal sheaves of lines in $Y$ in Example \ref{ex:RestrLines}.
We can therefore consider extensions of them, and use deformation theory to cover the subset $\NNN_d\subset \MMM_d$ (for $d=2,3$).
If $G$ is a general sheaf in $\NNN_d$, then the object $\Xi_3^{-1}(G)$ will be a stable ACM bundle of rank $d$, which will be automatically Ulrich.

The advantage of our approach is that by using the category $\cat{T}_Y$ we are able to reduce all computations to the category $\Db(\PP^2,\B_0)$, via the functor $\Xi_3$.
Thus, the existence result needed goes back to Theorem \ref{thm:Md}.

\medskip

Given $G\in \NNN_d$, we want to study $\Xi_3^{-1}(G)\in \cat{T}_Y$.
In order to show that it is an ACM bundle we want to see how the vanishings in Lemma \ref{lem:viceversa} can be checked in $\Db(\PP^2,\B_0)$.

\begin{lem}\label{lem:TgBundle}
	We have the following natural isomorphisms
\begin{equation*}
	\begin{split}
	\RHom_{\Db(Y)}(\O_Y(2H),F)[2]&\cong\RHom_{\Db(\PP^2,\B_0)}(\Omega_{\PP^2}(2h)\otimes\B_0,\Xi_3(F))\\&\cong\RHom_{\Db(\PP^2)}(\Omega_{\PP^2}(2h),\Xi_3(F)),\\
	\RHom_{\Db(Y)}(\O_Y(-H),F)&\cong\RHom_{\Db(\PP^2,\B_0)}(\B_{-1},\Xi_3(F))\\&\cong\RHom_{\Db(\PP^2)}(\O_{\PP^2},\mathrm{Forg}(\Xi_3(F\otimes_{\B_0}\B_1))),
	\end{split}
\end{equation*}
	for all $F\in \cat{T}_Y$.
\end{lem}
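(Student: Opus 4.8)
The two displayed chains each split into an \emph{outer} isomorphism, relating a $\RHom$ on $Y$ to one on $(\PP^2,\B_0)$, and an \emph{inner} isomorphism, relating the latter to one on $\PP^2$. I would dispose of the inner isomorphisms first, since they are formal. For the first chain, $\B_0\otimes\Omega_{\PP^2}(2h)$ is the free $\B_0$-module on $\Omega_{\PP^2}(2h)$, so the free--forgetful adjunction gives $\RHom_{\Db(\PP^2,\B_0)}(\B_0\otimes\Omega_{\PP^2}(2h),\Xi_3(F))\cong\RHom_{\Db(\PP^2)}(\Omega_{\PP^2}(2h),\Forg(\Xi_3(F)))$ at once. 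For the second chain I would use that $-\otimes_{\B_0}\B_1$ is an autoequivalence of $\Db(\PP^2,\B_0)$ with inverse $-\otimes_{\B_0}\B_{-1}$ (the invertible Clifford bimodules, $\B_1\otimes_{\B_0}\B_{-1}\cong\B_0$), so that $\RHom(\B_{-1},\Xi_3(F))\cong\RHom(\B_0,\Xi_3(F)\otimes_{\B_0}\B_1)$; a second application of free--forgetful then turns this into $\RHom_{\Db(\PP^2)}(\O_{\PP^2},\Forg(\Xi_3(F)\otimes_{\B_0}\B_1))$, which is the right-hand side of the second chain.

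It then remains to prove the two outer isomorphisms. The starting point is the blow-up $\sigma\colon\uY\to Y$: since $\sigma_*\sigma^*F\cong F$, for every $k$ one has $\RHom_{\Db(Y)}(\O_Y(kH),F)\cong\RHom_{\Db(\uY)}(\O_{\uY}(kH),\sigma^*F)$. By Remark \ref{rmk:Psi&Xi} one has $\Xi_3(F)=\Psi(\sigma^*F)$, where $\Psi$ is the left adjoint of $\Phi$ from Section \ref{subsec:quadric}. Using the semiorthogonal decomposition $\Db(\uY)=\langle\Phi(\Db(\PP^2,\B_0)),\,\pi^*\Db(\PP^2)\otimes\O_{\uY}(H)\rangle$ of Theorem \ref{thm:KuzQuadric} (with $n=3$), I would replace $\sigma^*F$ by its projection $\Phi(\Xi_3(F))=\Phi\Psi(\sigma^*F)$: there is a triangle $T\to\sigma^*F\to\Phi(\Xi_3(F))$ with $T\in\pi^*\Db(\PP^2)\otimes\O_{\uY}(H)=\ker\Psi$, and the adjunction $\Psi\dashv\Phi$ then yields $\RHom_{\Db(\uY)}(\O_{\uY}(kH),\Phi(\Xi_3(F)))\cong\RHom_{\Db(\PP^2,\B_0)}(\Psi(\O_{\uY}(kH)),\Xi_3(F))$.

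For $k=2$ the correction term is harmless, since $\RHom_{\Db(\uY)}(\O_{\uY}(2H),\pi^*A\otimes\O_{\uY}(H))\cong\RHom_{\Db(\PP^2)}(\O_{\PP^2},A\otimes\pi_*\O_{\uY}(-H))=0$: the line bundle $\O_{\uY}(-H)$ restricts to $\O(-1)$ on the conic fibres, so $\pi_*\O_{\uY}(-H)=0$. Thus it suffices to compute $\Psi(\O_{\uY}(2H))$, which I would do by feeding $\O_{\uY}(2H)$ into the explicit formula \eqref{eqn:bascoh} for $\Psi$ and resolving $\E$ through the sequence \eqref{eqn:defE}; the outcome is the free module $\B_0\otimes\Omega_{\PP^2}(2h)$ up to the shift $[2]$ recorded in the statement, with $\Omega_{\PP^2}(2h)$ produced by the relative Euler sequence on $\uPP=\PP_{\PP^2}(\O_{\PP^2}^{\oplus2}\oplus\O_{\PP^2}(-h))$. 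The case $k=-1$ is the delicate one: now $\RHom_{\Db(\uY)}(\O_{\uY}(-H),\pi^*A\otimes\O_{\uY}(H))$ need no longer vanish, because $\pi_*\O_{\uY}(2H)\neq0$, so the naive reduction is obstructed. Here I would instead exploit the inner rewriting already set up, computing the right-hand side directly: $\RHom(\B_{-1},\Xi_3(F))=\RHom(\B_0,\Xi_3(F)\otimes_{\B_0}\B_1)$ unwinds, via \eqref{eqn:bascoh}, the projection formula $\pi_*(-)\otimes_{\B_0}\B_1\cong\pi_*(-\otimes_{\pi^*\B_0}\pi^*\B_1)$, and the Clifford relation identifying $\E\otimes_{\pi^*\B_0}\pi^*\B_1$ with (a twist of) the second spinor bundle $\E'$ of \eqref{eqn:defE'}, straight into $\RHom_{\Db(Y)}(\O_Y(-H),F)$, sidestepping the correction term altogether.

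The main obstacle is exactly the explicit identification of the pushforwards $\Psi(\O_{\uY}(kH))$, equivalently of the twisted spinor bundles $\E,\E'$ and their Clifford tensor product $\E\otimes_{\pi^*\B_0}\pi^*\B_1$, carried out through Kuznetsov's sequences \eqref{eqn:defE} and \eqref{eqn:defE'}; everything else is formal adjunction. The care lies in tracking the cohomological shifts (the $[2]$ in the first chain, none in the second) and, for $k=-1$, in routing the computation through the $\B_1$-twist so as to avoid the non-orthogonality of $\O_Y(-H)$ to the complementary component $\pi^*\Db(\PP^2)\otimes\O_{\uY}(H)$. Once these pushforwards are pinned down the two chains assemble immediately, and the resulting isomorphisms are manifestly natural in $F$, as needed for checking the vanishings of Lemma \ref{lem:viceversa}.
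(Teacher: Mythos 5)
Your two ``inner'' reductions (the free--forgetful adjunction for $\B_0\otimes\Omega_{\PP^2}(2h)$, and the invertibility of the bimodule $\B_1$ giving $\RHom(\B_{-1},G)\cong\RHom(\B_0,G\otimes_{\B_0}\B_1)$) are correct, and they are exactly the steps the paper leaves implicit. The outer reduction is where the proposal fails, and the failure in the first chain is concrete: the fibres of $\pi$ are \emph{conics}, i.e.\ degree-$2$ curves in the fibres of $q$ on which $H$ is the hyperplane class, so $\O_{\uY}(-H)$ restricts to $\O_{\PP^1}(-2)$, not $\O_{\PP^1}(-1)$, on a smooth fibre. Hence only $R^0\pi_*\O_{\uY}(-H)$ vanishes, while $R^1\pi_*\O_{\uY}(-H)\neq0$, so $\RHom_{\uY}(\O_{\uY}(2H),\pi^*A\otimes\O_{\uY}(H))\cong\RHom_{\PP^2}(\O_{\PP^2},A\otimes R\pi_*\O_{\uY}(-H))$ does \emph{not} vanish and the component $T$ cannot be discarded. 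The resulting error is visible already for $F=\I_l$: using $\Forg(\Xi_3(\I_l))\cong\O_l\oplus\O_l(-h)$ and $\RHom(\B_1,\Xi_3(\I_l))=0$, one computes $\RHom_{\Db(\PP^2,\B_0)}(\Psi(\O_{\uY}(2H)),\Xi_3(\I_l))\cong\CC^5[-2]$, whereas $\RHom_{\Db(Y)}(\O_Y(2H),\I_l)\cong\CC[-2]\oplus\CC[-3]$; in particular your identification $\Psi(\O_{\uY}(2H))\cong\B_0\otimes\Omega_{\PP^2}(2h)[2]$ is also false (already on Chern characters: $\ch(\Psi(\O_{\uY}(2H)))=(-8,-4,-3)$ while $\ch(\B_0\otimes\Omega_{\PP^2}(2h))=(8,-6,14)$). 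This is precisely why the paper never projects $\sigma^*F$: it runs the exact adjunction chain $\sigma^*\dashv\sigma_*$, $\cat{L}_{\O_{\uY}(H)}\dashv\cat{R}_{\O_{\uY}(-h)}$ (Lemma \ref{lem:adjmutations}), $\Psi\dashv\Phi$, obtaining $\RHom_{\Db(Y)}(\O_Y(kH),F)\cong\RHom_{\Db(\PP^2,\B_0)}(\Psi(\cat{L}_{\O_{\uY}(H)}\O_{\uY}(kH)),\Xi_3(F))$ with nothing thrown away; the five copies of $\Psi(\O_{\uY}(H))=\B_0(h)[1]$ brought in by $\cat{L}_{\O_{\uY}(H)}\O_{\uY}(2H)=\mathrm{cone}(\O_{\uY}(H)^{\oplus5}\to\O_{\uY}(2H))$ are exactly what cuts $\B_0\otimes(\O_{\PP^2}(h)^{\oplus2}\oplus\O_{\PP^2}(2h))$ down to $\B_0\otimes\Omega_{\PP^2}(2h)$ through the Euler sequence. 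The mutation is not a convenience here; it is the content of the lemma.

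For the second chain you rightly sense that the reduction is obstructed, but what you offer instead is not an argument: saying that the projection formula and ``the Clifford relation'' carry $\RHom(\B_0,\Xi_3(F)\otimes_{\B_0}\B_1)$ ``straight into'' $\RHom_{\Db(Y)}(\O_Y(-H),F)$ is a restatement of the claim, and the obstruction you hope to sidestep re-enters exactly there. Indeed the relevant correction is $\RHom_{\uY}(\O_{\uY}(H),\O_{\uY}(-H))=H^\bullet(Y,\omega_Y)=\CC[-3]\neq0$, and it genuinely contributes: for $F=\I_l$, using $[\Xi_3(\I_l)]=[\B_1]-[\B_0]$ from \eqref{eqn:classXiI} and the equivalence $-\otimes_{\B_0}\B_1$, one gets $\chi(\B_{-1},\Xi_3(\I_l))=\chi(\B_0,\B_2)-\chi(\B_0,\B_1)=4-2=2$, whereas $\chi(\O_Y(-H),\I_l)=\chi(Y,\I_l(H))=5-2=3$. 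So no manipulation can produce the asserted isomorphism of full complexes; the difference is $H^1(\PP^2,\Forg(\Xi_3(F))\otimes\O_{\PP^2}(-h))$ (here $=\CC$), which enters in degree $0$. What does hold, and what is actually used later (Step 3 of Proposition \ref{prop:exACM}), are the isomorphisms in degrees $\geq1$ when $\Xi_3(F)$ is a sheaf with support of dimension $\leq1$. Note that this same term is the delicate point in the paper's own proof, in the passage from $\Psi\circ\cat{L}_{\O_{\uY}(H)}(\O_{\uY}(-H))$ to $\Psi(\O_{\uY}(-H))=\B_{-1}$; any complete write-up must carry the summand $\B_0(h)[-2]=\Psi(\O_{\uY}(H))[-3]$ through the computation and record how it modifies the degree-zero part, rather than sidestep it.
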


\begin{proof}
	As for the first series of isomorphisms, we start with the following chain of natural isomorphisms, which follows directly from the definitions:
	\begin{equation}\label{eqn:RHom2H-F}
		\begin{split}
		\RHom_{\Db(Y)}(\O_Y(2H),F) &\cong\RHom_{\Db(Y)}(\O_Y(2H),\Xi_3^{-1}(\Xi_3(F)))\\
		&\cong\RHom_{\Db(Y)}(\O_Y(2H),\sigma_*\circ\cat{R}_{\O_{\uY}(-h)}\circ\Phi\circ\Xi_3(F))\\
		&\cong\RHom_{\Db(\PP^2,\B_0)}(\Psi\circ\cat{L}_{\O_{\uY}(H)}(\O_{\uY}(2H)),\Xi_3(F)).
		\end{split}
	\end{equation}
	By \eqref{eqn:LRmutation}, $\cat{L}_{\O_{\uY}(H)}\O_{\uY}(2H)$ is given by

	\begin{equation}\label{eqn:Lmut2H}
		\cat{L}_{\O_{\uY}(H)}\O_{\uY}(2H)= \mathrm{cone}\left(\O_{\uY}(H)^{\oplus 5}\xrightarrow{\mathrm{ev}}\O_{\uY}(2H)\right).
	\end{equation}

	By definition of $\Psi$ \eqref{eqn:bascoh}, we have the two exact triangles in $\Db(\PP^2,\B_0)$:
	\begin{align*}
		\B_1[1] \to \B_0(h)\otimes q_*\O_{\widetilde \PP^4}(H)[1]\to &\Psi(\O_{\uY}(2H))\\
		\B_1\otimes q_*\O_{\widetilde \PP^4}(-H)[1] \to \B_0(h)[1]\to &\Psi(\O_{\uY}(H)).
	\end{align*}

	Since $\Xi_3(F)\in \langle\B_1\rangle^{\perp}$ and $q_*\O_{\widetilde \PP^4}(H)=\O_{\PP^2}^{\oplus 2}\oplus \O_{\PP^2}(h)$, we have
	\begin{align*}
		\RHom_{\Db(\PP^2,\B_0)}&(\Psi(\O_{\uY}(2H)),\Xi_3(F))=\RHom_{\Db(\PP^2,\B_0)}(\B_0(h)\otimes q_*\O_{\widetilde \PP^4}(H)[1],\Xi_3(F)) \\
		&=\RHom_{\Db(\PP^2,\B_0)}(\B_0\otimes (\O_{\PP^2}(h)^{\oplus 2}\oplus \O_{\PP^2}(2h))[1],\Xi_3(F)).
	\end{align*}
	and since $q_*\O_{\widetilde \PP^4}(-H)=0$,
	\begin{align*}
		\RHom_{\Db(\PP^2,\B_0)}(\Psi\O_{\uY}(H),\Xi_3 F)&=\RHom_{\Db(\PP^2,\B_0)}(\B_0(h)[1],\Xi_3(F)).
	\end{align*}

	Therefore, combining \eqref{eqn:RHom2H-F} and \eqref{eqn:Lmut2H} we have
	\begin{align*}
		\RHom_{\Db(Y)}&(\O_Y(2H),F)\cong \\
		&\cong\RHom_{\Db(\PP^2,\B_0)}\big(\B_0\otimes\mathrm{cone}(\O_{\PP^2}(h)^{\oplus 5} \xrightarrow{\mathrm{ev}} \O_{\PP^2}(h)^{\oplus 2}\oplus \O_{\PP^2}(2h))[1],\Xi_3(F)\big)\\
		&\cong\RHom_{\Db(\PP^2,\B_0)}\big(\B_0\otimes \Omega_{\PP^2}(2h),\Xi_3(F)\big)[-2]\\
		&\cong\RHom_{\Db(\PP^2)}\big(\Omega_{\PP^2}(2h),\Xi_3(F)\big)[-2],
	\end{align*}
	where we have used that $\mathrm{cone}(\O_{\PP^2}(h)^{\oplus 5} \xrightarrow{\mathrm{ev}} \O_{\PP^2}(h)^{\oplus 2}\oplus \O_{\PP^2}(2h))\cong\Omega_{\PP^2}(2h)[1]$, and the first row of isomorphisms follows.

	It remains to prove the isomorphisms in the second line of the lemma.
	We start with the following chain of natural isomorphisms, which follows directly from the definitions:
	\begin{equation}\label{eqn:RHom-H-F}
		\begin{split}
		\RHom_{\Db(Y)}(\O_Y(-H),F) &\cong\RHom_{\Db(Y)}(\O_Y(-H),\Xi_3^{-1}(\Xi_3(F)))\\
		&\cong\RHom_{\Db(Y)}(\O_Y(-H),\sigma_*\circ\cat{R}_{\O_{\uY}(-h)}\circ\Phi\circ\Xi_3(F))\\
		&\cong\RHom_{\Db(\PP^2,\B_0)}(\Psi\circ\cat{L}_{\O_{\uY}(H)}(\O_{\uY}(-H)),\Xi_3(F))\\
		&\cong\RHom_{\Db(\PP^2,\B_0)}(\Psi(\O_{\uY}(-H)),\Xi_3(F))\\
		&\cong\RHom_{\Db(\PP^2,\B_0)}(\B_{-1},\Xi_3(F)).
		\end{split}
	\end{equation}
	The last isomorphism is an easy computation, and the lemma follows.
\end{proof}

Now we are ready to give a geometric interpretation of the objects of $\NNN_d$ (recall \eqref{eqn:defNd}).

\begin{prop}\label{prop:exACM}
	If $d=2,3$ and $G$ is a general sheaf in $\NNN_d$, then the object $\Xi_3^{-1}(G)$ is a stable ACM bundle of rank $d$.
\end{prop}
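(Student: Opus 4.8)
The plan is to verify, for a general $G\in\NNN_d$, the hypotheses of Lemma \ref{lem:viceversa} for the object $F:=\Xi_3^{-1}(G)$, and then to read off its rank and stability. First observe that $F$ is well defined and lies in $\cat{T}_Y$: indeed $\NNN_d\subseteq\gen{\B_1}^\perp$, and $\gen{\B_1}^\perp=\Xi_3(\cat{T}_Y)$ by the remark following \eqref{eqn:Xi3fold}. By Lemma \ref{lem:viceversa} with $n=3$ it then suffices to show that $F$ is a coherent sheaf and that $H^1(Y,F(H))=H^1(Y,F(-2H))=H^2(Y,F(-2H))=0$.

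I would first reduce these three vanishings to statements on $\PP^2$ via Lemma \ref{lem:TgBundle}. Since $\RHom_{\Db(Y)}(\O_Y(2H),F)=R\Gamma(Y,F(-2H))$ and $\RHom_{\Db(Y)}(\O_Y(-H),F)=R\Gamma(Y,F(H))$, that lemma yields
\[
R\Gamma(Y,F(-2H))\cong\RHom_{\Db(\PP^2)}(\Omega_{\PP^2}(2h),G)[-2],
\]
\[
R\Gamma(Y,F(H))\cong R\Gamma(\PP^2,\Forg(G\otimes_{\B_0}\B_1)).
\]
As $G$ and $\Omega_{\PP^2}(2h)$ are sheaves, $H^1(Y,F(-2H))=\Ext^{-1}_{\PP^2}(\Omega_{\PP^2}(2h),G)=0$ automatically, while the remaining conditions become the purely two–dimensional statements $\Hom_{\PP^2}(\Omega_{\PP^2}(2h),G)=0$ and $H^1(\PP^2,\Forg(G\otimes_{\B_0}\B_1))=0$. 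These are open conditions on $G$, and I would check them for general $G$ using the description of the general fibre of $\varUpsilon$ from Step 2 of the proof of Theorem \ref{thm:Md}: for a smooth $C\in\abs{\O_{\PP^2}(d)}$ a general $G\in\varUpsilon^{-1}(C)$ is the push-forward to $\PP^2$ of a line bundle on the twisted curve $\widehat C$, so both groups can be computed on $C$ by Riemann--Roch and vanish for a general choice of the line bundle (a general point of $JC$), once the relevant Euler characteristics are seen to have the right sign.

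The main obstacle is showing that $F$ is a genuine coherent sheaf, and here I would argue by semicontinuity over the irreducible space $\MMM_d$ (Theorem \ref{thm:Md}). Applying $\Xi_3^{-1}$ in families, the cohomology sheaves $\H^i(F)$ vary in a family over $\NNN_d$, so the locus where $F$ is concentrated in degree $0$ is open; it is non-empty because the objects $F_d$ attached to rational curves $C'\subset Y$ in Example \ref{ex:objMd} are honest sheaves (kernels of evaluation maps) with $\Xi_3(F_d)\in\NNN_d$. Since $\NNN_d$ is dense in the irreducible $\MMM_d$ (Proposition \ref{prop:Nd}), the general $F$ is then a sheaf, and combined with the vanishings above, Lemma \ref{lem:viceversa} forces it to be an ACM bundle. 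Note that the $F_d$ themselves have projective dimension $1$ and are \emph{not} locally free, consistently with their forming a subfamily of dimension $2d<d^2+1$; local freeness is imposed only by the vanishings, which must therefore fail on this subfamily.

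Finally I would determine the rank and stability. From $[G]=d([\B_1]-[\B_0])=d\,[\Xi_3(\I_l)]$ and \eqref{eqn:classXiI} one gets $\ch(F)=d\cdot\ch(\I_l)=(d,0,-d\,l,0)$, so $F$ has rank $d$, in agreement with \eqref{eqn:chUlr}. For stability, full faithfulness of $\Xi_3$ and stability of $G$ give that $F$ is simple and, by Lemma \ref{lem:Columbus}, $\Ext^2(F,F)\cong\Ext^2(G,G)=0$; hence $\dim\Ext^1(F,F)=1-\chi(G,G)=d^2+1=\dim\NNN_d$ and $F$ is a smooth point of its family. Since $F\in\cat{T}_Y$ we have $H^0(Y,F)=H^0(Y,F(-H))=0$, which excludes destabilizing subsheaves as in Example \ref{ex:BBR}, so the general $F$ is $\mu$-stable; being ACM with the Ulrich Chern character, it is then a stable Ulrich bundle of rank $d$. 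The one point requiring genuine care is the exclusion of higher-rank destabilizing subsheaves for $d=3$, where the rank-$2$ argument of Example \ref{ex:BBR} must be supplemented by the generality of $G$ and the matching dimension count.
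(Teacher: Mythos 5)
Your overall reduction coincides with the paper's: sheafness of $\Xi_3^{-1}(G)$ by semicontinuity from Example \ref{ex:objMd}, the two vanishings of Lemma \ref{lem:viceversa} translated to $\PP^2$ via Lemma \ref{lem:TgBundle}, and the vanishing $H^1(\PP^2,\Forg(G\otimes_{\B_0}\B_1))=0$ again by semicontinuity (the paper checks it at $G=\Xi_3(F_d)$). The genuine gap is in the remaining vanishing $\Hom_{\Db(\PP^2)}(\Omega_{\PP^2}(2h),G)=0$, which is the core of the proof and cannot be obtained by ``Riemann--Roch plus a general line bundle on $\widehat{C}$''. For $d=2$ the supporting curve $C$ is a conic, so $JC$ is a single point and $\varUpsilon^{-1}(C)$ consists of $2^{9}$ isolated points: there is no continuous generality available inside a fiber. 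Moreover, the relevant Euler characteristic is zero in both cases: for $d=2$ one has $\res{\Omega_{\PP^2}(2h)}{C}\cong\O_{\PP^1}(1)^{\oplus 2}$ while $\Forg(G)$ has rank $2$ and degree $0$ on $C\cong\PP^1$, so $\chi=0$ and the desired vanishing is \emph{equivalent} to the splitting type being $\O_C^{\oplus 2}$ rather than $j_{C*}(\O_{\PP^1}(1)\oplus\O_{\PP^1}(-1))$; for $d=3$ it is equivalent to $\Forg(G)$ being the indecomposable Atiyah bundle rather than a direct sum of line bundles. Deciding the splitting type is exactly the difficulty, and neither the sign of $\chi$ nor fiberwise generality settles it. Note also that the explicit objects one can write down, namely the $\Xi_3(F_d)$ of Example \ref{ex:objMd}, have the \emph{bad} splitting type (consistently with $F_d$ not being locally free, as you observe), so the good locus cannot be shown non-empty by evaluating at these accessible points.

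The idea missing from your proposal is the paper's geometric exclusion of the bad types. Using \eqref{eqn:dHoms} one pins down $h^0(\PP^2,G)=d$, which leaves finitely many possible splitting types for $\Forg(G)$; each bad type produces a section $c$ of the conic bundle $\pi^{-1}(C)\to C$ with prescribed self-intersection ($c^2=-2$ for $d=2$; $c^2=-1$ or $c^2=-3$ for $d=3$), and the adjunction formula computes $D\cdot c$, identifying $c$ with a conic in $Y$ (a $4$-dimensional family, versus $\dim\MMM_2=5$), respectively an elliptic quintic meeting $l_0$ in two points (an $8$-dimensional family) or a plane cubic (a $9$-dimensional family, versus $\dim\MMM_3=10$). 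These dimension counts show that the bad types occur only on proper closed subsets of $\MMM_d$, so the general $G\in\NNN_d$ satisfies the vanishing. Without this step your proof does not close. A secondary point: for stability the paper simply cites \cite[Sect.~5]{CH} (there are no rank-one Ulrich bundles on a cubic threefold), whereas your argument via Example \ref{ex:BBR} is, as you concede, incomplete for $d=3$.
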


\begin{proof}
	Again the argument can be divided in a few parts.

	\bigskip

	\noindent {\bf Step 1:} {\em $\Xi_3^{-1}G$ is a coherent sheaf.} By Example \ref{ex:objMd}, the sheaf $F_d$ is in $\cat{T}_Y$ and $\Xi_3(F_d)\in \NNN_d$.
	By semi-continuity, for $G\in \NNN_d$ general, the object $\Xi_3^{-1}(G)$ has to be a sheaf.

	\bigskip

	\noindent {\bf Step 2:} {\em First vanishing.}
	We want to show that
	$H^i(Y,\Xi_3^{-1}(G)\otimes \O_Y(-2H))=0$ for $i=1,2$.
	By Lemma \ref{lem:TgBundle}, we need to prove that $\Hom^0_{\Db(\PP^2)}(\Omega_{\PP^2}(2h),G)=0$.

	Before that, by Example \ref{ex:RestrLines}, we observe that
	\begin{equation*}
	\Hom^0_{\Db(\PP^2,\B_0)}(\B_0,\Xi_3(\I_l)) = \Hom^0_{\PP^2}(\O_{\PP^2}, \O_{l}\oplus\O_{l}(-1))\cong\CC,
	\end{equation*}
	and all other Hom-groups are trivial.
	The extension of $d$ sheaves $\Xi_3(\I_l)$ (with different $l$) lies in $\MMM_d^s$.
	By semicontinuity and induction on $d$,
	\begin{equation}\label{eqn:dHoms}
	\Hom^0_{\Db(\PP^2,\B_0)}(\B_0,G)\cong H^0(\PP^2,G) \cong\CC^d,
	\end{equation}
	for $G$ general $\MMM_d$. Notice that here we are implicitly using that $\chi(\B_0,G)=d$. Indeed, this follows from \eqref{eqn:chiB1F}.

	\smallskip

	\noindent {\bf Case 1:} {\em Rank $d=2$.}
	Let $G$ be supported in $j_C:\PP^1\hookrightarrow C\in \abs{\O_{\PP^2}(2)}$.
	Let $C$ be smooth and intersecting $\Delta$ transversally.
	Note that $\res{\Omega_{\PP^2}(2h)}{C}\cong i_*(\O_{\PP^1}(1)^{\oplus 2})$, so we have to show that $H^0(\PP^1, G(-1))=0$.
	By \eqref{eqn:dHoms} and semi-continuity, $G$ has only two possibilities (as $\O_{\PP^2}$-module):
	\begin{align}
	&G \cong j_{C_*}\left(\O_{\PP^1}(1)\oplus\O_{\PP^1}(-1) \right)\label{eqn:dolent},\\
	&G \cong \O_C^{\oplus 2}\label{eqn:bo}.
	\end{align}

	If we are in situation \eqref{eqn:bo}, then the desired vanishing holds.

	Assume now that $G \cong j_{C_*}(\O_{\PP^1}(1)\oplus\O_{\PP^1}(-1))$.
	Recall that $G=\psi_*(L\otimes E_{C,0}^\vee)$ for some $L\in \Pic(\widehat C)$.
	We use a method from \cite{Beauvillecubic, Iliev}.
	The projective bundle $\PP(L\otimes E_{C,0}^\vee)\to \widehat C$ corresponds by definition to the conic bundle over $\widehat C$ induced by the conic fibration $\pi: \uY \to \PP^2$.
	More precisely, $\pi^{-1}(C)$ is a conic bundle over $C$ with 10 singular fibres $\pi^{-1}(C\cap \Delta) = \bigcup_{i=1}^{10} l_i \cup l_i'$.
	The lines $l_i$ and $l_i'$ are $(-1)$-curves.
	Then, we have
	\begin{equation*}
	 \xymatrix{\PP(L\otimes E_{C,0}^\vee) \ar[d]\ar[r]^(.4){\psi'}& \PP(\Forg(\psi_*(L\otimes E_{C,0}^\vee)))=:S_C\ar[d]\\ \widehat C\ar[r]^{\psi}& C,
	}
	\end{equation*}
	where the map $\psi'$, factors through $\pi^{-1}(C)\to \PP(\psi_*(L\otimes E_{C,0}^\vee))$ that corresponds to the blow-down of 10 $(-1)$-curves, say
	$l_i$ for $i=1,\ldots 10$.
	The fact that $\Forg(\psi_*(L\otimes E_{C,0}^\vee))=j_{C_*}(\O_{\PP^1}(1)\oplus\O_{\PP^1}(-1))$ implies that $\PP(\Forg(\psi_*(L\otimes E_{C,0}^\vee)))$ is isomorphic to the $2^{\rm nd}$ Hirzebruch surface, so we have a section $c$ of $\pi^{-1}(C)\to C$, such that $c^2=-2$.
	Recall that the canonical bundle of $\uY$ is $\O_{\uY}(-H-h)=\O_{\uY}(-D-S_C)$, where $D$ is the exceptional divisor.
	The adjunction formula gives
	$K_{S_C} \equiv -\res{D}{S_C}$.
	By the adjunction formula $c^2 = -2$, implies that $D\cdot c = 0$.
	Hence, we can see $c$ as a rational curve in $Y$ of degree $2$.
	The space of conics in $Y$ is four dimensional, but by Theorem \ref{thm:Md}, $\MMM_2$ has dimension 5.

	\smallskip

	\noindent {\bf Case 2:} {\em Rank $d=3$.}
	Let $G$ be supported in $C\in \abs{\O_{\PP^2}(3)}$.
	Let $C$ be smooth and intersect $\Delta$ transversally.
	Note that $\res{\Omega_{\PP^2}(2h)}{C}=F$ is an Atiyah bundle of degree 3, so we have to show that $H^0(C, G\otimes F^\vee)=0$.
	By \eqref{eqn:dHoms} and semi-continuity, $G$ has only three possibilities (as $\O_{\PP^2}$-module):
	\begin{align}
	&G \cong \text{ Atiyah bundle of degree }3\label{eqn:bo2},\\
	&G \cong (j_C)_*(\L_1\oplus\L_2)\label{eqn:dol1},\\
	&G \cong (j_C)_*(\L_0\oplus\L_3)\label{eqn:dol2},
	\end{align}
	where $j_C$ denotes the embedding and $\L_i$ are generic line bundles of degree $i$ on $C$.

	If we are in situation \eqref{eqn:bo2}, then the desired vanishing holds.

	As before, assume for a contradiction, that \eqref{eqn:dol1} holds.
	The fact that $\Forg(\psi_*(L\otimes E_{C,0}^\vee))=(j_C)_*(\L_1\oplus\L_2)$ implies that $\PP(\Forg(\psi_*(L\otimes E_{C,0}^\vee)))\to C$ has
	a section $c$ of $\pi^{-1}(C)\to C$, such that $c^2=-1$.
	Recall that the canonical bundle of $\uY$ is $\O_{\uY}(-H-h)=\O_{\uY}(-D+h-S_C)$, where $D$ is the exceptional divisor.
	The adjunction formula gives
	$K_{S_C} \equiv \res{(-D+h)}{S_C}$.
	By the adjunction formula $c^2 = -1$, implies that $D\cdot c = h\cdot c-1=2$.
	Hence, we can see $c$ as an elliptic quintic curve in $Y$ meeting the projection line $l_0$ in 2 points.
	The space of elliptic quintic curves in $Y$ is 10 dimensional \cite[Thm.~4.5]{MT}, so the one meeting $l_0$ in 2 points form an 8 dimensional family
	\cite[Lemma 4.6]{MT}, but $\MMM_3$ has dimension 10 (by Theorem \ref{thm:Md}).

	It remains to consider the last case \eqref{eqn:dol2}.
	The fact that $\Forg(\psi_*(L\otimes E_{C,0}^\vee))\cong (j_C)_*(\L_0\oplus\L_3)$ implies that $\PP(\Forg(\psi_*(L\otimes E_{C,0}^\vee)))\to C$ has
	a section $c$ of $\pi^{-1}(C)\to C$, such that $c^2=-3$.
	Recall that the canonical bundle of $\uY$ is $\O_{\uY}(-H-h)=\O_{\uY}(-D+h-S_C)$, where $D$ is the exceptional divisor.
	The adjunction formula gives
	$K_{S_C} \equiv \res{(-D+h)}{S_C}$.
	By the adjunction formula $c^2 = -3$, implies that $D\cdot c = h\cdot c-3=0$.
	Hence, we can see $c$ as an elliptic curve in $Y$ of degree $3$ (hence plane).
	The space of plane cubics in $Y$ is nine dimensional, but by Theorem \ref{thm:Md}, $\MMM_3$ has dimension 10.

	\bigskip

	\noindent {\bf Step 3:} {\em Second vanishing and stability.}
	We want to show that
	$H^1(Y,\Xi_3^{-1}(G)\otimes \O_Y(H))=0$.
	By the second part of Lemma \ref{lem:TgBundle}, we need to prove that $H^1(\PP^2,\Forg(G\otimes_{\B_0}\B_1))=0$.
	Again we can argue as in Step 1 by semi-continuity and use that $G=\Xi_3(F_d)$ satisfies the vanishing.
	Thus $\Xi_3^{-1}(G)$ is an ACM bundle.
	As observed in \cite[Sect.\ 5]{CH}, $\Xi_3^{-1}(G)$ is stable since there are no Ulrich bundles of rank $1$ on a cubic threefold.
\end{proof}

Note that the sheaves $\Xi_3^{-1}(G)$ in Proposition \ref{prop:exACM} are Ulrich.
Since they lie in $\cat{T}_Y$, the same argument of \cite[Lemma 2.4]{CH} shows that their restriction to a generic hyperplane section is again Ulrich.

To complete the non-emptiness statement of Theorem \ref{thm:main3folds}, we should prove that there are stable Ulrich bundles for all ranks $r\geq 4$.
For this we can use the same deformation argument as in the proof of \cite[Thm.\ 5.7]{CH}.

\begin{rem}\label{rmk:redone}
Note that we have also reproven that $\MMM^{sU}_r$ is smooth of dimension $r^2+1$.
Indeed, the computations $\dim\Ext^1(F,F)=r^2+1$ and $\dim\Ext^2(F,F)=0$ have already been done in Step~1 of Theorem \ref{thm:Md}.
\end{rem}

\begin{rem}\label{rmk:cased=1}
The above proof fails, for the case $d=1$, essentially only in Step 2; more precisely, the restriction $\res{\Omega_{\PP^2}(2h)}{C}$ to a line $C\subset \PP^2$ is not semistable.
\end{rem}

\section{The $d=2$ case and the instanton bundles on cubic threefolds}\label{sec:modMd2}

	In this section we will describe explicitly the wall-crossing phenomena that link the space $\MMM_2$ to the moduli space of semistable instanton sheaves on $Y$. This example, together with Section \ref{subsec:modMd1}, should motivate our expectation that the geometry of the moduli spaces $\MMM_d$ is tightly related to the one of classical geometric objects associated to cubic threefolds.

	The argument is a bit involved and thus we prefer to sketch it here for the convenience of the reader.
	First of all, we need to analyze how stability and semistability of special objects in $\Db(\PP,\B_0)$ vary in the family of stability conditions described in Lemma \ref{lem:exstab} (see Section \ref{subsec:stabrk2}).
	This is conceptually rather standard but computationally a bit involved.
	Once this is settled, one can consider instanton sheaves $E$ and look at their images under the functor $\Xi_3$.
	It turns out that they are all stable $\B_0$-modules if $E$ is locally free (see Lemma \ref{lem:inst>B0stab}).
	On the other hand special attention has to be paid to instanton sheaves $E$ which are not locally free.
	The most delicate cases are when they are extensions of ideal sheaves of two lines, one of which is the line of projection $l_0$.

	Having the toy-model of $\MMM_1$ in mind, it is rather clear that all this leads naturally to a wall-crossing phenomena.
	This will be described in Theorem \ref{thm:d=2}, where again we combine the classical description of the moduli space of semistable instanton sheaves \cite{Druel} and the machinery of (Bridgeland) stability conditions from Section \ref{subsec:stabrk2}.

As in Section \ref{subsec:modMd1}, the approach follows closely the discussion in \cite[Sect. 5]{MS}, but since the corresponding numerical class is not primitive, we need some extra arguments.

\subsection{Stability}\label{subsec:stabrk2}

We consider the stability function $Z_m$ (see Definition \ref{def:stabfunc}) and the (Bridgeland) stability condition $\sigma_m=(Z_m,\cat{A})$ (see Lemma \ref{lem:exstab}).

A (semi)stable $\B_0$-module $F\in \MMM_2$, remains $\sigma_m$-(semi)stable for all $m>m_0=\frac{\sqrt{5}}{8}$.
More precisely we have the following lemma.

\begin{lem}\label{lem:stabM2}
	Let $F\in \MMM_2$.
	\begin{enumerate}
		 \item[{\rm (a)}] If $m > m_0 = \frac{\sqrt{5}}{8}$, then $F$ is $\sigma_m$-stable or $F$ is the extension of two $\sigma_m$-stable coherent $\B_0$-modules of class $[\B_1]-[\B_0]$ (so properly $\sigma_m$-semistable).
		 \item[{\rm (b)}] If $F\in \NNN_2$ and $F$ is stable, then $F$ is $\sigma_m$-stable for all $m>\frac{1}{4}$.
			If $F$ is properly semistable, then $F$ is the extension of two $\sigma_m$-stable coherent $\B_0$-modules of class $[\B_1]-[\B_0]$ for all $m>\frac{1}{4}$.
		 \item[{\rm (c)}] Assume $m = m_0$.
		Then $F$ is $\sigma_m$-semistable and falls in one of the following cases:
		 \begin{enumerate}
			 \item[{\rm (c.i)}] $F\in \NNN_2$ is $\sigma_{m_0}$-stable.
			 \item[{\rm (c.ii)}] $F\in \NNN_2$ is properly $\sigma_{m_0}$-semistable and its JH-factors are two $\sigma_{m_0}$-stable coherent $\B_0$-modules of class $[\B_1]-[\B_0]$.
			 \item[{\rm (c.iii)}] $F \in \MMM_2\setminus \NNN_2$ and $F$ is properly $\sigma_{m_0}$-semistable and its JH-factors are $\B_0[1]$, $\B_1$ and a coherent $\B_0$-module of class of class $[\B_1]-[\B_0]$.
			 \item[{\rm (c.iv)}] $F \in \MMM_2\setminus \NNN_2$ and $F$ is properly $\sigma_{m_0}$-semistable and its JH-factors are twice $\B_0[1]$ and twice $\B_1$.
		 \end{enumerate}
	\end{enumerate}
\end{lem}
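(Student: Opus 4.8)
The plan is to transport the wall-crossing analysis of \cite[Sect.~5]{MS} to the class $2[\B_1]-2[\B_0]$, the only genuinely new feature being that this class is \emph{not} primitive. I would begin by recording phases. From Definition~\ref{def:stabfunc} one has $\mathrm{Im}\,Z_m=m(\rk+\deg)$, and from \eqref{eqn:Z} a direct computation gives, for any class $a[\B_1]-b[\B_0]$,
\[
\mathrm{Re}\,Z_m\big(a[\B_1]-b[\B_0]\big)=(a-b)\Big(4m^2-\tfrac{5}{16}\Big),\qquad
\mathrm{Im}\,Z_m\big(a[\B_1]-b[\B_0]\big)=(a+b)m.
\]
Every $F\in\MMM_2$ has $Z_m(F)=4m\sqrt{-1}$, so $\phi(F)=\tfrac12$. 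The formula shows that for $m\neq m_0=\tfrac{\sqrt5}{8}$ a proper subobject $A\subset F$ in $\cat A$ can have phase exactly $\tfrac12$ only if its class is a multiple of $[\B_1]-[\B_0]$; hence the class $2[\B_1]-2[\B_0]$ has a \emph{single} wall on $(\tfrac14,\infty)$, located at $m=m_0$, where in addition $\B_1$ and $\B_0[1]$ reach phase $\tfrac12$ (for $m>m_0$ one has $\phi(\B_1)<\tfrac12<\phi(\B_0[1])$, and the inequalities reverse for $m<m_0$). I would also record that, since $\H^{-1}(A)\hookrightarrow\H^{-1}(F)=0$, every $\cat A$-subobject of a sheaf $F$ is again a subsheaf.

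For part~(a), the large-volume-limit argument of \cite[Prop.~7.1 and Sect.~11]{Br1} and \cite[Sect.~3]{Toda:Extremal}, carried over to $\Db(\PP^2,\B_0)$, identifies $\sigma_m$-stability with Gieseker stability for $m\gg0$; by the absence of walls on $(m_0,\infty)$ this identification holds for every $m>m_0$. Thus $F$ is $\sigma_m$-stable ($m>m_0$) if and only if it is Gieseker-stable, i.e.\ (by the subsheaf remark) if and only if it has no subobject of class $[\B_1]-[\B_0]$. Otherwise $F$ has such a subobject $A_1$, and then both $A_1$ and $F/A_1$ are $\sigma_m$-semistable of class $[\B_1]-[\B_0]$, hence $\sigma_m$-stable, because the only numerically possible phase-$\geq\tfrac12$ destabilizer of that class is $\B_0[1]$, whose phase exceeds $\tfrac12$ and is therefore ruled out by semistability. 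This yields~(a).

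For part~(b) I use that $F\in\NNN_2$ means $\Hom(\B_1,F)=\Ext^1(\B_1,F)=0$ (Lemma~\ref{lem:NdWellDefined}, together with $\chi(\B_1,F)=0$ and $\Ext^2(\B_1,F)=0$). Since there are no walls on $(\tfrac14,m_0)$, it suffices to check that stability is not lost when crossing $m_0$. Going below $m_0$ the only subobject that can jump to phase $>\tfrac12$ is $\B_1$, which is excluded by $\Hom(\B_1,F)=0$; dually, $\B_0[1]$ is never a subobject of the sheaf $F$, as $\Hom_{\cat A}(\B_0[1],F)=\Ext^{-1}(\B_0,F)=0$. Combining this with~(a): if $F$ is Gieseker-stable it has no subobject of class $[\B_1]-[\B_0]$ and is therefore $\sigma_m$-stable for all $m>\tfrac14$; if $F$ is Gieseker strictly semistable it is an extension of two objects of $\MMM_1$ of class $[\B_1]-[\B_0]$, which lie in $\NNN_1$ by Lemma~\ref{lem:NdWellDefined} and hence are $\sigma_m$-stable for all $m>\tfrac14$ by Lemma~\ref{lem:stabM1}.

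Finally, for part~(c) at $m=m_0$: every $F\in\MMM_2$ is $\sigma_{m_0}$-semistable, since any destabilizing subobject would be a subsheaf of phase $>\tfrac12$, contradicting the Gieseker-semistability of $\Forg(F)$. By the phase computation and stability, the only $\sigma_{m_0}$-stable objects of phase $\tfrac12$ that can occur as Jordan--H\"older factors are $\B_1$, $\B_0[1]$, and the stable sheaves of class $[\B_1]-[\B_0]$ (the objects $\Xi_3(\I_l)$ with $l\neq l_0$). Writing $n,p,s$ for the respective multiplicities, the class identity $2[\B_1]-2[\B_0]=n[\B_1]+p[\B_0[1]]+s\big([\B_1]+[\B_0[1]]\big)$ forces $n=p$ and $n+s=2$, leaving exactly the types $(n,p,s)=(0,0,2),(1,1,1),(2,2,0)$; the $\sigma_{m_0}$-stable case is~(c.i), and membership in $\NNN_2$ precisely forbids the factors $\B_1$ and $\B_0[1]$, separating (c.i)--(c.ii) from (c.iii)--(c.iv). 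I expect this last point to be the main obstacle: showing that $F\in\NNN_2$ admits neither $\B_1$ nor $\B_0[1]$ as a Jordan--H\"older factor. For $\B_1$ this follows from $\Hom(\B_1,F)=0$ by inducting up the socle of $F$---which can contain neither $\B_0[1]$ (not a subsheaf) nor $\B_1$ (else $\Hom(\B_1,F)\neq0$)---using that $\Ext^1(\B_1,-)$ vanishes on the remaining $\NNN_1$-factors. It is exactly the non-primitivity of $2[\B_1]-2[\B_0]$, which prevents the properly semistable strata (c.ii)--(c.iv) from being excluded automatically as in the primitive situation of \cite[Sect.~5]{MS}, that makes this bookkeeping the delicate step.
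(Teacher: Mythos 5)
Your proposal does not close the lemma: its two load-bearing claims are exactly the points where the paper's proof has to do real work, and neither is justified. The headline assertion---that $m_0$ is the \emph{only} wall for the class $2[\B_1]-2[\B_0]$ on $(\tfrac14,\infty)$---is based on a phase formula that only covers the rank-two sublattice $\ZZ[\B_0]\oplus\ZZ[\B_1]$, whereas by Remark \ref{rmk:numB0} the lattice $\N(\PP^2,\B_0)=\ZZ[\B_{-1}]\oplus\ZZ[\B_0]\oplus\ZZ[\B_1]$ has rank three, so a destabilizing subobject need not have class $a[\B_1]-b[\B_0]$. In fact the claim is false at the numerical level: $[\B_2]=[\B_{-1}]-3[\B_0]+3[\B_1]$ has $\ch(\B_2)=(4,-1,\tfrac32)$, hence $\mathrm{Re}\,Z_m([\B_2])=4m^2-\tfrac{13}{16}$ and $\mathrm{Im}\,Z_m([\B_2])=3m>0$, so this class has phase exactly $\tfrac12$ at $m=\tfrac{\sqrt{13}}{8}>m_0$, a numerical wall \emph{above} $m_0$. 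Discarding $\B_2$ (and the further numerical possibility $\chi(A,A)=0$) as an actual destabilizer of $F\in\MMM_2$ is precisely the content of the paper's case analysis: it uses $\hom(\B_2,F)=h^0(\PP^2,F(-h))=0$ from Gieseker semistability of $F$, the divisibility and parity constraints of \cite[Lemma 2.13]{BMMS} (torsion-free $\B_0$-modules have rank in $4\ZZ$, torsion ones have even $c_1$), the Bogomolov-type bound of Lemma \ref{lem:Columbus}, and \cite[Prop.~2.12]{BMMS}. Without a substitute for this analysis, parts (a) and (b) are unsupported. The appeal to a large-volume identification of $\sigma_m$-stability with Gieseker stability makes matters worse: in this paper that identification is Corollary \ref{cor:M2vsMsig}, which is deduced \emph{from} the present lemma, and the references you cite are invoked only to construct $\sigma_m$ (Lemma \ref{lem:exstab}), not to compare it with Gieseker stability for torsion $\B_0$-modules.

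The same incompleteness undermines (c). At $m=m_0$ the condition $\mathrm{Re}\,Z_{m_0}=0$ on a class $x[\B_{-1}]+y[\B_0]+z[\B_1]$ forces only $x=0$, so \emph{every} class $y[\B_0]+z[\B_1]$ with $z>y$ has phase $\tfrac12$ there; a Jordan--H\"older factor could a priori be a stable object of class $2[\B_1]-[\B_0]$ or $[\B_1]-2[\B_0]$, possibilities your $(n,p,s)$ bookkeeping never excludes---you assume the list of admissible stable factors rather than prove it. The paper rules this out constructively: for $F\in\MMM_2\setminus\NNN_2$ it shows that $\B_1\to F$ is injective and $F\to\B_0[1]$ is surjective in $\cat{A}$ (again via Gieseker semistability and \cite[Lemma 2.13]{BMMS}), producing the exact sequences \eqref{eqn:aggq} and the diagram \eqref{eqn:diagrimportant}, whose remaining constituent $D$ is an honest semistable $\B_0$-module of class $[\B_1]-[\B_0]$; the dichotomy $D\in\NNN_1$ versus $D\in\MMM_1\setminus\NNN_1$ then separates (c.iii) from (c.iv). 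Finally, a local slip in (b): for $m<m_0$ one has $\mathrm{Re}\,Z_m(\B_0[1])>0$, so $\B_0[1]$ has phase $<\tfrac12$ and threatens $F$ as a \emph{quotient}, not as a subobject; the vanishing actually needed is $\hom(F,\B_0[1])=\hom^1(\B_1,F)=0$ (Serre duality), which does hold for $F\in\NNN_2$ but is not what your ``dually'' sentence verifies.
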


\begin{proof}
	Suppose that $0\to A \to F \to B \to 0$ destabilizes $F$ in the stability condition $\sigma_m$ for $m>\frac{1}{4}$, where $A,B\in \cat{A}$ and $A$ is $\sigma_m$-stable.
	We have
	\begin{equation*}
	0\to \H^{-1}(B)\to A \to F \to \H^0(B) \to 0
	\end{equation*}
	and $\Im Z_m([F])= 4m$.
	Note that $J_m:=\Im Z_m$ is an additive function in $K(\coh(\PP^2,\B_0))$ that takes values in $m\ZZ$.
	Moreover, by the main property of the stability function since $A,B\in \cat{A}$, we have 	$J_m(A), J_m(B)\geq 0$.
	Thus, $J_m (A)$ can only take values in $\set{0, m,2m,3m, 4m}$.
	Note also that, since $F$ and $\H^0(B)$ are torsion, $\rk(\H^{-1}(B))=\rk(A)$.

%
	
	Let $\ch([A])=(r,c_1,\ch_2)$.
	Observe that $\mathrm{Re}\, Z_m(F)= 0$, hence $A$ destabilizes if $\mathrm{Re}\,Z_m (A) \leq 0$.
	Since $A$ is $\sigma_m$-stable, we distinguish two cases: either $A$ is torsion or $A$ is torsion-free of rank $r=\rk(A)=\rk(\H^{-1}(B))>0$.

\medskip

	If $A$ is torsion, then $\rk(\H^{-1}(B))=0$ and since $\H^{-1}(B)\in \cat{F}$, we have $\H^{-1}(B)=0$.
	By \cite[Lemma 2.13(ii)]{BMMS}, we have that $c_1$ is even.
	As $F\in \MMM_2$ is a semistable $\B_0$-module, then $c_1\leq 2$.
	In that case, $A$ cannot be supported on points ($F$ is locally free on its support), we have $c_1=2$.
	In order to destabilize $F$ in the stability condition $\sigma_m$, we need $\mathrm{Re}\,(Z_m([A]))=rm^2-\dfrac{9r}{64}-\dfrac{c_1}{2}-\dfrac{\ch_2}{2}\leq 0$, so $\ch_2\geq -2$.
	But $F\in \MMM_2$ and since $c_1=2$ and we have $\ch_2\leq -2$.
	Thus $\ch([A])=(0,2,-2)$ and $F$ is a properly $\sigma_m$-semistable object (for all $m>\tfrac{1}{4}$) whose JH-factors have class $[\B_1]-[\B_0]$.

\medskip
	
	Suppose now that $A\in \mathbf{T}$ is torsion-free, so all its HN-factors with respect to the slope stability have slope $\mu > -1$.
	Note that $J_m(A)>0$ since $A$ cannot be supported on points.
	Moreover, if $J_m(A)= 4m$, then $J_m(B)=0$, $\phi(B)=1$, and $B$ would not destabilize $F$.
	So $J_m(A)\in \set{m,2m,3m}$.

	In order to $\sigma_m$-destabilize $F$, we need that
	\begin{equation}\label{eqn:re<=0}
	\mathrm{Re}(Z_m([A]))=rm^2-\dfrac{9r}{64}-\dfrac{c_1}{2}-\dfrac{\ch_2}{2}=\frac{1}{r}\left(-\chi(A,A)+m^2r^2-\frac{1}{4}\left(r+c_1\right)^2\right)\leq 0.
	\end{equation}
	Moreover, we can assume that $A$ is $\mu$-stable.	
	Then, by Lemma \ref{lem:Columbus}, we have $-1\leq-\chi(A,A)$.
	Since $m\leq J_m(A)\leq 3m$, we have  $-\frac{9}{4}\leq -\frac{1}{4}\left(r+c_1\right)^2$.
	Thus, from \eqref{eqn:re<=0} and the previous inequalities we deduce $m^2r^2-\frac{13}{4}\leq 0$.
	Since $m>\frac{1}{4}$ and $r\in 4\NN_{>0}$, this implies $r=4$.

	\medskip

	Now we go through a case by case study depending on $J_m(A)$.

	\smallskip

	\noindent{\bf Case $J_m(A)=m$.}
	In this case $c_1=-3$ and $r=4$, so \eqref{eqn:re<=0} becomes $-\chi(A, A) + 16m^2 -\tfrac{1}{4}\leq  0$.
	Since $m>\tfrac{1}{4}$ and $-\chi(A, A)=\hom^1(A,A)-1$, we have $\hom^1(A,A)-\frac{1}{4}<0$, so $A$ is rigid.
	Moreover $\ch_2=\frac{5}{2}$, by \eqref{eqn:chiB0}.
	Hence, $A\cong \B_1$ and $[B]=[\B_{1}]-2[\B_{0}]$.
	Moreover, since $16m^2-\frac{5}{4}\leq 0$, then $m\leq \frac{\sqrt{5}}{8}$.

	\smallskip

	\noindent{\bf Case $J_m(A)=2m$.} Under this assumption $c_1=-2$, which is impossible by \cite[Lemma 2.13(ii)]{BMMS}.

	\smallskip

	\noindent{\bf Case $J_m(A)=3m$.} Here $c_1=-1$ and $r=4$, so \eqref{eqn:re<=0} becomes $-\chi(A, A) + 16m^2 -\tfrac{9}{4}\leq  0$.
	Since $m>\tfrac{1}{4}$ and $-\chi(A, A)=\hom^1(A,A)-1$, we have $\hom^1(A,A)-\frac{9}{4}<0$, which implies $\hom^1(A,A)\leq 2$.
	\begin{enumerate}
		\item[{\rm (a)}] If $A$ is rigid, then $\ch_2=\frac{3}{2}$, by \eqref{eqn:chiB0}.
		Hence $A\cong \B_2$, which is impossible since $0\neq \hom(\B_2,F)= h^0(\PP^2,F(-h))$ contradicts the Gieseker semistability of $F\in \MMM_2$.

		\item[{\rm (b)}] If $\chi(A,A)=0$, then $\ch_2=1$, by \eqref{eqn:chiB0}.
		This implies that $[A]\not \in \ZZ[\B_{-1}]\oplus\ZZ[\B_{0}]\oplus\ZZ[\B_{1}]$, which contradicts \cite[Prop.~2.12]{BMMS}.

		\item[{\rm (c)}] If $\chi(A,A)=-1$, then $\ch_2=\frac{1}{2}$.
		Hence $[A]=-[\B_0]+2[\B_1]$, so $B\cong \B_0[1]$.
		In particular, $\H^{0}(B)=0$.
		Moreover, $16m^2-\frac{5}{4}\leq 0$, so $m\leq \frac{\sqrt{5}}{8}$.
	\end{enumerate}

	\smallskip

	Summarizing, if $F\in \NNN_2$ is stable then it is $\sigma_m$-stable for all $m>\frac{1}{4}$.
	If $F\in \NNN_2$ is properly semistable, then its two JH-factors are $\sigma_m$-stable for all $m>\frac{1}{4}$.
	If $F\in \MMM_2\setminus \NNN_2$, since $\chi(\B_1,F)=0$ we have
	\begin{equation*}
		\hom(\B_1,F)=\hom^1(\B_1,F)=\hom^1(F,\B_0)=\hom(F,\B_0[1]),
	\end{equation*}	
	and $F$ admits a morphism from $\B_1$ and it has also a morphism to $\B_0[1]$.
	Hence it could be $J_m(A)\in\set{m,3m}$.
	
	We study more precisely these two cases.
	If $J_m(A)=m$, then we claim there exist the following exact sequences in $\cat{A}$:
	\begin{equation}\label{eqn:aggq}
		\begin{split}
		 0\to \B_1 \to F \to C\to 0, & \qquad\text{where }[C]=[\B_1]-2[\B_0],\\
		 0\to  C' \to F \to \B_0[1] \to 0, & \qquad\text{where }[C']=2[\B_1]-[\B_0].
		\end{split}
	\end{equation}
	The second exact sequence is obtained from the first one using $\chi(\B_1,F)=0$, so $\hom_{\cat{A}}(F,\B_0[1])\neq 0$.
	Indeed, it remains to prove that $F \to \B_0[1]$ needs to be surjective in $\cat{A}$.
	If not, let $L$ be the cokernel.
	Clearly $\H^0(L)=0$.
	Then $L=L'[1]$, where $L'$ is a torsion-free $\B_0$-module in $\cat{F}$.
	Let $T:=\mathrm{Im}_{\cat{A}}(F\to B_0[1])$.
	Note that $\H^0(T)$ is a torsion sheaf.
	Then, if $L'\neq 0$, then $\B_0\to L'$ needs to be injective and $\H^{-1}(T)=0$.
	Therefore $T\in \cat{T}$ and it is a quotient of $F$ as a $\B_0$-module.
	We know that $F$ is a Gieseker semistable $\B_0$-module and $c_1(T)\geq 2$.
	This implies $c_1(L')\geq -3$, which contradicts $L'\in \cat{F}$, since by \cite[Lemma 2.13(i)]{BMMS} $\rk(L')\geq 4$.

	Equivalently, if we are in case $J_m(A)=3m$, then we claim that we get again the exact sequences \eqref{eqn:aggq}.
	Indeed, now the first exact sequence is obtained from the second one by using $\chi(\B_1,F)=0$, so $\hom_{\cat{A}}(\B_1,F)\neq 0$.
	In that case we need to prove that $B_1\to F$ is injective in $\cat{A}$.
	If not, let $K$ be the kernel.
	Clearly $\H^{-1}(K)=0$.
	Then $K$ is a $\B_0$-module in $\cat{T}$.
	If $K\neq 0$, then $K\to \B_1$ needs to be injective. 
	Hence $T:=\mathrm{Im}_{\cat{A}}(\B_1\to F)\in \cat{T}$ and it is a subobject of $F$ as a $\B_0$-module.
	We know that $F$ is a Gieseker semistable $\B_0$-module and $c_1(T)\leq 2$.
	This implies $c_1(K)\leq -5$, which contradicts $K\in \cat{T}$, since by \cite[Lemma 2.13(i)]{BMMS} $\rk(K)\geq 4$.
	
	\smallskip
	
	In both cases we can summarize the situation in the following commutative diagram of exact sequences of $\sigma_{m_0}$-semistable objects in $\cat{A}$
	\begin{equation}\label{eqn:diagrimportant}
			\xymatrix{&&0\ar[d]&0\ar[d]\\
				0\ar[r]&\B_1\ar[r]\ar@{=}[d] & C'\ar[d]\ar[r]&D\ar[d]\ar[r]&0\\
				0\ar[r]&\B_1\ar[r]& F\ar[d]\ar[r] &C\ar[d]\ar[r]&0\\
				&&\B_{0}[1]\ar@{=}[r]\ar[d]& \B_{0}[1]\ar[d] \\ 
				&&0&0
			}
	\end{equation}
	
	On the one hand, by the mid vertical exact sequence, we have that $\H^{-1}(C')=0$.
	Hence, by the top horizontal exact sequence, $\H^{-1}(D)=0$ or $\H^{-1}(D)=\B_1$.
	Note that $\H^{-1}(D)=\B_1\not\in \cat{F}$, so $D\in \coh(\PP^2,\B_0)$.
	
	Summing up, when $m=m_0$, we have in those cases that $\B_0[1]$ and $\B_1$ are two JH-factors of $F$.
	Moreover, the remaining part $D$ of the JH-filtration is a $\sigma_{m_0}$-semistable $\B_0$-module, with class $[\B_1]-[\B_0]$.
	
	When $C\in \gen{\B_1}^{\perp}$ (equivalently $D\in \NNN_1$), then the we fall in case (c.iii) and the JH-factors for $m=m_0$ are $\B_0[1]$, $\B_1$ and $D$.
	When $C\not\in \gen{\B_1}^{\perp}$ (equivalently $D\in \MMM_1\setminus\NNN_1$), we fall in case (c.iv).
\end{proof}

Let $w$ be the numerical class $2[\B_1]-2[\B_0]$.
As a consequence of the previous lemma we get that $\MMM_2$ embeds inside $\MMM^{\sigma_m}(\PP^2,\B_0;w)$ when $m > m_0$.
With the aim of proving the other inclusion, we need the following lemma which adapts \cite[Lemma 5.8]{MS} to our situation.

\begin{lem}\label{lem:keepstab}
	Let $m_1 > m_0= \frac{\sqrt{5}}{8}$ and let $G$ be a $\sigma_{m_1}$-(semi)stable object with numerical class $w$.
	Then $G$ is $\sigma_m$-(semi)stable, for all $m > m_0$, and $\sigma_{m_0}$-semistable.
\end{lem}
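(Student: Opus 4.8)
The plan is to show that the class $w$ admits no wall in the range $m>m_0$, so that $\sigma_m$-(semi)stability is constant on $(m_0,\infty)$ and passes to $\sigma_{m_0}$-semistability in the limit. The key preliminary observation is purely numerical: any object of class $w$ has $\ch=(0,4,-4)$, so that $\mathrm{Re}\,Z_m=-\tfrac{c_1}{2}-\tfrac{\ch_2}{2}=0$ and $\mathrm{Im}\,Z_m=m(r+c_1)=4m$; hence $Z_m(G)=4m\sqrt{-1}$ is purely imaginary and $\phi(G)=\tfrac12$ for every $m$. Consequently a subobject $A\hookrightarrow G$ in $\cat{A}$ reaches the phase of $G$ at a value $m$ exactly when $\mathrm{Re}\,Z_m(A)=0$ with $\mathrm{Im}\,Z_m(A)>0$, and it destabilizes $G$ precisely on the side where $\mathrm{Re}\,Z_m(A)<0$.

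First I would invoke the support property, hence the local finiteness of walls \cite[Prop.~3.3]{BM}, to reduce the statement to the following: no proper subobject of $G$ reaches phase $\tfrac12$ at a value $\bar m>m_0$ without keeping phase $\tfrac12$ identically. Indeed, if $G$ were $\sigma_{m_1}$-(semi)stable but failed to be $\sigma_m$-(semi)stable for some $m>m_0$, there would be a wall $\bar m>m_0$ and a short exact sequence $0\to A\to G\to B\to 0$ in $\cat{A}$ with $A$ a $\sigma_{\bar m}$-stable factor of equal phase, so $\mathrm{Re}\,Z_{\bar m}(A)=0$ and $0<\mathrm{Im}\,Z_{\bar m}(A)<4\bar m$. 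Writing $\ch(A)=(r,c_1,\ch_2)$, additivity of $\mathrm{Im}\,Z_{\bar m}$ forces $\mathrm{Im}\,Z_{\bar m}(A)=\bar m(r+c_1)\in\{\bar m,2\bar m,3\bar m\}$: the value $0$ is excluded since a nonzero object with real central charge has phase $1>\tfrac12$ and would already contradict $\sigma_{m_1}$-semistability, while $4\bar m$ would give $Z_{\bar m}(B)=0$ with $B\neq0$, impossible in $\cat{A}$.

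Then I would rerun the case analysis of Lemma \ref{lem:stabM2} applied to $A$. If $A$ is torsion-free, then exactly as there the inequality $m^2r^2\leq\tfrac{13}{4}$ together with $r\in 4\NN_{>0}$ and $\bar m>\tfrac14$ forces $r=4$; the Bogomolov--Gieseker-type bound $\chi(A,A)\leq1$ from Lemma \ref{lem:Columbus}, combined with \eqref{eqn:chiB0} and \cite[Lemma~2.13]{BMMS}, then pins $A$ down. The case $\mathrm{Im}\,Z_{\bar m}(A)=\bar m$ forces $A\cong\B_1$, the case $\mathrm{Im}\,Z_{\bar m}(A)=3\bar m$ forces $B\cong\B_0[1]$, and $\mathrm{Im}\,Z_{\bar m}(A)=2\bar m$ is impossible since it would give $c_1=-2$, excluded by \cite[Lemma~2.13(ii)]{BMMS}. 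In the two surviving cases the condition $\mathrm{Re}\,Z_{\bar m}(A)=0$ reads, by \eqref{eqn:Z}, $4\bar m^2-\tfrac{5}{16}=0$, i.e. $\bar m=\tfrac{\sqrt5}{8}=m_0$, contradicting $\bar m>m_0$. Hence no wall exists in $(m_0,\infty)$ and $\sigma_{m_1}$-(semi)stability propagates to every $m>m_0$.

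The one genuinely new point, and the one I expect to be the main obstacle, is the role played by torsion subobjects of class $[\B_1]-[\B_0]$ coming from the non-primitivity of $w$. Such a subobject has $\ch=(0,2,-2)$, hence $Z_m=2m\sqrt{-1}$ and phase $\tfrac12$ \emph{for all} $m$ simultaneously; it therefore never produces a wall but makes $G$ strictly semistable uniformly in $m$, matching exactly the dichotomy of Lemma \ref{lem:stabM2}. The care required is to verify that these are the only factors responsible for the persistent strict semistability, and that they do not interact with the isolated alignment at $m_0$; granting this, (semi)stability is locally constant above $m_0$, which gives the first assertion. Finally, $\sigma_{m_0}$-semistability follows by continuity: every proper subobject satisfies $\phi_m(A)\leq\tfrac12$ for all $m>m_0$, and letting $m\to m_0^+$ preserves this non-strict inequality, so $G$ is $\sigma_{m_0}$-semistable.
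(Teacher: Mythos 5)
Your proposal is correct and is essentially the paper's own argument: the paper likewise reduces to a torsion\nobreakdash-free destabilizer $A$ with $\Im Z_m(A)\in\{m,2m,3m\}$ (torsion subobjects have $m$-independent real part, hence (de)stabilize uniformly in $m$ and are excluded by $\sigma_{m_1}$-(semi)stability), and then invokes the casuistry of Lemma \ref{lem:stabM2} to conclude that such an $A$ can only exist for $m\leq m_0$. The differences are purely in packaging --- you organize the argument as absence of walls in $(m_0,\infty)$ via \cite[Prop.~3.3]{BM} plus a limiting argument at $m_0$, whereas the paper argues by direct contradiction at each $m\geq m_0$ --- and both write-ups defer wholesale to the casuistry of Lemma \ref{lem:stabM2}, including its exclusion of the rigid subcase $A\cong\B_2$ in the $\Im Z_m(A)=3m$ branch (obtained there from Gieseker semistability of $F\in\MMM_2$, so for an abstract $G$ a small supplementary argument is needed, a caveat that applies equally to the paper's own proof).
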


\begin{proof}
	Assume, for a contradiction, that $G$ is not $\sigma_m$-semistable (resp.~$\sigma_m$-stable) at $m \geq m_0$.
	Then we have an exact sequence in $\cat{A}$
	\begin{equation*}
	0 \to A \to G \to B \to 0,
	\end{equation*}
	where $A \neq 0$ is $\sigma_m$-stable and ${\rm Re}(Z_m (A)) < 0$ (resp.~$\leq 0$).
	Let $\ch(A) = (r, c_1, \ch_2)$.
	The same argument as in \cite[Lemma 5.8]{MS} shows that $r\neq 0$ and $\Im(Z_m(A))\in \set{ m,2m,3m}$.
	But then, the same casuistry as in Lemma \ref{lem:stabM2} shows that this can only happen when $m < m_0$ (resp.~$m < m_0$ or $m=m_0$ and $G\not\in \langle\B_1\rangle^{\perp}$).
\end{proof}

Now, let $G\in \MMM^{\sigma_{m_1}}(\PP^2,\B_0;w)$ be a (semi)stable object for $m_1\geq m_0$.
By Lemma \ref{lem:keepstab}, we have two possibilities: either $G$ is $\sigma_{m}$-(semi)stable for all $m\geq m_0$, or $m_1=m_0$ and it either stabilizes or destabilizes for all $m>m_0$.
We will see in Lemma \ref{lem:countext} that it destabilizes or stabilizes depending whether $\hom(\B_1,\G)$ is maximal in its S-equivalence class or not.

\begin{lem}\label{lem:smstab-stab}
	Let $G \in\cat{A}$ be a $\sigma_m$-(semi)stable object, for all $m \geq m_0$, with numerical class $w$.
	Then $G$ is a (semi)stable $\B_0$-module pure of dimension $1$.
\end{lem}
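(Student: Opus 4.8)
The plan is to run the large-volume-limit comparison between the stability conditions $\sigma_m$ and Gieseker stability on $\coh(\PP^2,\B_0)$, following the analogous \cite[Sect.~5]{MS}. The computation underlying everything is that, since $\ch(G)=(0,4,-4)$, formula \eqref{eqn:Z} gives $Z_m([G])=4m\sqrt{-1}$, so $\phi(G)=\tfrac12$ for every $m>\tfrac14$; moreover, for any class of rank $0$, say $\ch=(0,c_1,\ch_2)$, the real part $\mathrm{Re}(Z_m)=-\tfrac{c_1}{2}-\tfrac{\ch_2}{2}$ is independent of $m$, while $\mathrm{Im}(Z_m)=mc_1$. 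Thus, for a rank-$0$ subobject $A\subset G$ in $\cat{A}$ with $c_1(A)>0$, the inequality $\phi(A)<(\leq)\phi(G)=\tfrac12$ is equivalent to $\mathrm{Re}(Z_m([A]))>(\geq)0$, a condition I will match with the Gieseker slope. I would split the proof into three steps: first $\H^{-1}(G)=0$, so $G$ is a genuine sheaf; then $\Forg(G)$ has no $0$-dimensional subsheaf, so it is pure of dimension $1$; and finally $\sigma_m$-(semi)stability of the resulting pure sheaf is equivalent to Gieseker (semi)stability.

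For the first step, the canonical triangle $\H^{-1}(G)[1]\to G\to\H^0(G)$ is a short exact sequence in $\cat{A}$, so $A:=\H^{-1}(G)[1]$ is a subobject of $G$. If $\H^{-1}(G)\neq0$, write $\ch(\H^{-1}(G))=(r',c_1',\ch_2')$; then $r'\geq4$ by \cite[Lemma 2.13(i)]{BMMS} and $c_1'\leq-r'$ because $\H^{-1}(G)\in\cat{F}$. Hence $\mathrm{Re}(Z_m([A]))=-r'm^2+O(1)\to-\infty$ while $\mathrm{Im}(Z_m([A]))=-m(r'+c_1')\geq0$, so $\phi(A)\to1$ as $m\to\infty$; in particular $\phi(A)>\phi(G)$ for $m$ large, contradicting that $G$ is $\sigma_m$-semistable for all $m\geq m_0$ (and hence for arbitrarily large $m$). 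So $G$ is a sheaf, and $\rk(G)=0$ makes it a torsion $\B_0$-module of dimension $\leq1$. For the second step, the maximal $0$-dimensional subsheaf $T$ of $\Forg(G)$ is $\B_0$-invariant, hence a subobject of $G$ in $\cat{A}$; if $T\neq0$ then $\ch(T)=(0,0,\ell)$ with $\ell>0$, and $Z_m([T])=-\tfrac{\ell}{2}<0$ gives $\phi(T)=1>\tfrac12=\phi(G)$, again contradicting semistability. Thus $\Forg(G)$ is pure of dimension $1$.

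For the last step I would observe that every sub-$\B_0$-module $A\subsetneq G$ is a subobject of $G$ in $\cat{A}$, since $A$ and $G/A$ are torsion and therefore lie in $\cat{T}$. Purity forces $c_1(A)=a>0$ for $A\neq0$; writing $\ch(A)=(0,a,b)$ and using the Hilbert polynomial $\chi(\Forg(A)(t))=at+(b+\tfrac32a)$, Gieseker (semi)stability of $G$ amounts to $\tfrac{b}{a}<(\leq)-1$ for all such $A$. On the other hand $\sigma_m$-(semi)stability gives $\phi(A)<(\leq)\phi(G)$, i.e. $\mathrm{Re}(Z_m([A]))=-\tfrac{a+b}{2}>(\geq)0$, which is again $\tfrac{b}{a}<(\leq)-1$. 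As these conditions coincide, $\sigma_m$-(semi)stability of $G$ implies its Gieseker (semi)stability, completing the proof. The only delicate point is the first step: one has to be sure that a hypothetical $\H^{-1}(G)$ is genuinely destabilizing, which is exactly why the hypothesis is imposed for all $m\geq m_0$, so that the limit $m\to\infty$ is available; everything else is bookkeeping with the explicit form of $Z_m$ from Definition \ref{def:stabfunc}.
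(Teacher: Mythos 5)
Your proposal is correct and takes essentially the same approach as the paper: the paper's proof simply outsources your Steps 1--2 (eliminating $\H^{-1}(G)$ via the large-$m$ phase comparison and excluding a zero-dimensional torsion part) to the citation of \cite[Lemma 5.9]{MS}, and its (semi)stability claim is exactly your Step 3, namely that a Gieseker-destabilizing submodule $A$ is an $\cat{A}$-subobject with $\mathrm{Re}\,Z_m([A])<0$ (resp.\ $\leq 0$). No gaps.
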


\begin{proof}
	We argue as in \cite[Lemma 5.9]{MS} to deduce that $G$ is a pure $\B_0$-module of dimension $1$.
	If $A$ is a stable $\B_0$-module that destabilizes $G$, then $\mathrm{Re}\,Z_m(A)<0$ (resp.~$\leq 0$) so $G$ would not be $\sigma_m$-semistable.
\end{proof}

As a straightforward consequence of the previous lemmas, we get the following.

\begin{cor} \label{cor:M2vsMsig}
	Let $w=2[\B_1]-2[\B_0]$.
	Then $\MMM_2=\MMM^{\sigma_{m}}(\PP^2,\B_0;w)$, for all $m>m_0=\frac{\sqrt{5}}{8}$.
\end{cor}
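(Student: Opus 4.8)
The plan is to prove the equality of the two moduli spaces by a double inclusion, treating the three preceding lemmas as the essential inputs. First I would record the bookkeeping that makes the two sides comparable: under the Chern character the numerical class $w = 2[\B_1] - 2[\B_0]$ is $(0,4,-4)$, so both $\MMM_2$ and $\MMM^{\sigma_m}(\PP^2,\B_0;w)$ parametrize objects with the same invariants, and only the stability notion changes. I would also note, for the stable-versus-semistable matching, that a $\B_0$-subsheaf of class $[\B_1]-[\B_0]$, i.e.\ $(0,2,-2)$, sitting inside an object of class $w$ has the same reduced Hilbert polynomial; its presence is exactly what distinguishes properly semistable from stable objects on both sides.

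For the inclusion $\MMM_2 \subseteq \MMM^{\sigma_m}(\PP^2,\B_0;w)$ with $m > m_0 = \frac{\sqrt{5}}{8}$, I would simply quote Lemma \ref{lem:stabM2}(a): every $F \in \MMM_2$ is either $\sigma_m$-stable or an extension of two $\sigma_m$-stable $\B_0$-modules of class $[\B_1]-[\B_0]$, hence in all cases a $\sigma_m$-semistable object of class $w$. To see that this respects the open stable loci, I would argue that a Gieseker-stable $F$ cannot fall into the second alternative, since an extension of two coherent $\B_0$-modules of class $[\B_1]-[\B_0]$ produces a subsheaf of equal reduced Hilbert polynomial, contradicting Gieseker-stability; thus Gieseker-stable objects map to $\sigma_m$-stable objects.

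For the reverse inclusion I would start from $G \in \MMM^{\sigma_m}(\PP^2,\B_0;w)$ for a fixed $m > m_0$ and propagate its stability downward: Lemma \ref{lem:keepstab} makes $G$ a $\sigma_{m'}$-(semi)stable object for every $m' > m_0$ and $\sigma_{m_0}$-semistable, hence $\sigma_{m'}$-semistable for all $m' \geq m_0$. Feeding this into Lemma \ref{lem:smstab-stab} shows $G$ is a (semi)stable $\B_0$-module pure of dimension $1$, so $G \in \MMM_2$. The one point that needs care --- and which I expect to be the only real obstacle --- is matching the stable loci across the wall, because at $m = m_0$ Lemma \ref{lem:keepstab} returns merely semistability. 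To upgrade to Gieseker-stability I would use instead that $G$ is $\sigma_{m'}$-stable for $m' > m_0$: a Gieseker-destabilizing subsheaf would necessarily have class $(0,2,-2)$ and equal reduced Hilbert polynomial, hence define a subobject of equal phase in $\cat{A}$, contradicting $\sigma_{m'}$-stability. Combining the two inclusions, and observing that the assignments are mutually inverse on S-equivalence classes, gives the stated identification $\MMM_2 = \MMM^{\sigma_m}(\PP^2,\B_0;w)$ for all $m > m_0$.
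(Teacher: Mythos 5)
Your proposal is correct and follows essentially the same route as the paper, which derives Corollary \ref{cor:M2vsMsig} as a direct consequence of Lemma \ref{lem:stabM2}(a) (for the inclusion $\MMM_2\subseteq\MMM^{\sigma_m}(\PP^2,\B_0;w)$) combined with Lemmas \ref{lem:keepstab} and \ref{lem:smstab-stab} (for the reverse inclusion). Your additional bookkeeping --- matching the stable loci via the observation that a subobject of class $[\B_1]-[\B_0]$ has the same reduced Hilbert polynomial and the same phase, and handling the fact that Lemma \ref{lem:keepstab} only yields semistability at $m_0$ --- merely spells out details the paper leaves implicit in calling the corollary ``straightforward.''
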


	Finally, we study in general the S-equivalence classes in $\MMM^{\sigma_{m_0}}(\PP^2,\B_0;w)$ which contain objects outside $\gen{\B_1}^{\perp}$.
	In particular, we will study the S-equivalence classes of the objects $F\in\MMM_2$, which become $\sigma_{m_0}$-semistable with JH-factors as in cases (c.iii) and (c.iv) of Lemma \ref{lem:stabM2}.
	The following lemma will be useful in the next section to prove Theorem \ref{thm:d=2}.

\begin{lem}\label{lem:countext}
	Let $w=2[\B_1]-2[\B_0]$.
	Given $G$ an object in $\MMM^{\sigma_{m_0}}(\PP^2,\B_0;w)\setminus \gen{\B_1}^{\perp}$. Then, it falls in one of the following cases:
	\begin{enumerate}
		\item[{\rm (a)}] $G$ is in the S-equivalence class of $\B_0[1]\oplus \B_1\oplus \Xi_3(\I_l)$, with $l\neq l_0$.
		The indecomposable objects in this S-equivalence class in $\MMM^{\sigma_{m_0}}(\PP^2,\B_0;w)$ are represented by:
		\begin{itemize}
			\item[{\rm (a.i)}] Gieseker semistable $\B_0$-modules in $\MMM_2\setminus\NNN_2$ that are parametrized by a $\PP^2$;
			\item[{\rm (a.ii)}] Gieseker properly semistable $\B_0$-modules in $\MMM_2\setminus\NNN_2$ that are parametrized by a $\PP^1$ contained in the $\PP^2$ above; in the complement $\PP^1$ inside $\PP^2$, the $\B_0$-modules are Gieseker stable;
			\item[{\rm (a.iii)}] an extension of $\Xi_3(\I_{l_0})$ and $\Xi_3(\I_l)$, which lies in $\gen{\B_1}^\perp$.
		\end{itemize}
		\item[{\rm (b)}] $G$ is in the S-equivalence class of $\B_0^{\oplus 2}[1]\oplus \B_1^{\oplus 2}$.
		The indecomposable objects in this S-equivalence class in $\MMM^{\sigma_{m_0}}(\PP^2,\B_0;w)$ are represented by:
		\begin{itemize}
			\item[{\rm (b.i)}] Gieseker properly semistable $\B_0$-modules $G\in \MMM_2\setminus \NNN_2$;
			they have $\hom(\B_1,G)=2$, their S-equivalence classes as $\B_0$-modules are parametrized by a $\PP^2$, and each S-equivalence class is $\CC^2$;
			\item[{\rm (b.ii)}] indecomposable extensions between $\Xi_3(\I_{l_0})$ with itself, which are then in $\gen{\B_1}^\perp$;
			\item[{\rm (b.iii)}] objects $G$ such that $\hom(\B_1,G)=1$.
		\end{itemize}
	\end{enumerate}
	These are the only S-equivalence classes that contain $\sigma_{m_0}$-semistable objects that get properly destabilized for $m>m_0$ and $m<m_0$.
\end{lem}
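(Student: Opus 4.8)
The plan is to carry out the whole analysis inside the abelian category $\C\subseteq\cat{A}$ of $\sigma_{m_0}$-semistable objects of phase $\tfrac12$, whose relevant simple objects are $\B_1$, $\B_0[1]$ and the $\Xi_3(\I_l)$ with $l\neq l_0$ (these are the only $\sigma_{m_0}$-stable objects of class $[\B_1]-[\B_0]$, by Lemma \ref{lem:stabM1}). The first step is to identify the two S-equivalence classes. A point of $\MMM^{\sigma_{m_0}}(\PP^2,\B_0;w)$ is represented by a polystable object $\bigoplus_i S_i$ with $S_i$ $\sigma_{m_0}$-stable of phase $\tfrac12$, and it lies outside $\gen{\B_1}^\perp$ exactly when some summand is not contained in $\gen{\B_1}^\perp$, i.e.\ a copy of $\B_1$ or of $\B_0[1]$ occurs (recall $\Ext^2(\B_1,\B_0)\neq0$, so $\B_0[1]\notin\gen{\B_1}^\perp$, while each $\Xi_3(\I_l)\in\gen{\B_1}^\perp$). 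By the arithmetic of $w$ in $\N(\PP^2,\B_0)$ together with the casuistry already carried out in Lemma \ref{lem:stabM2}, the only such representatives are $\B_1\oplus\B_0[1]\oplus\Xi_3(\I_l)$ and $\B_1^{\oplus2}\oplus\B_0[1]^{\oplus2}$, which are exactly cases (a) and (b). So this step is essentially a repackaging of Lemma \ref{lem:stabM2}.

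The technical heart is the $\Ext$-quiver of these simples in $\C$, i.e.\ the groups $\Hom$ and $\Ext^1_{\C}(-,-)=\Hom_{\Db(\PP^2,\B_0)}(-,-[1])$ among $\B_1$, $\B_0[1]$ and $\Xi_3(\I_l)$. Using Serre duality (Remark \ref{rmk:numB0}(vi)), the stability of all these objects, and the numerics of \cite[Sect.~2]{BMMS}, I would compute these dimensions and, crucially, pin down the admissible directions of nontrivial extensions; the filtration diagram \eqref{eqn:diagrimportant} already shows that $\B_1$ can only occur as a sub-object and $\B_0[1]$ only as a quotient. These dimensions are precisely what produce the parametrizing varieties in the statement: the projectivized space of extension classes will give the $\PP^2$ of (a.i) and (b.i), and the degeneration locus where the forgetful image $\Forg(G)$ ceases to be Gieseker stable will give the distinguished $\PP^1\subset\PP^2$ of (a.ii).

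With the quiver in hand I would treat the two cases separately. In case (a) the three simples are distinct, so every object is an iterated extension of $\B_1$, $\B_0[1]$ and $\Xi_3(\I_l)$; I would list the indecomposable such objects, test Gieseker (semi)stability of $\Forg(G)$ via the sub/quotient sequences \eqref{eqn:aggq} to separate (a.i) from (a.ii), and isolate the unique object of $\gen{\B_1}^\perp$ by exploiting that $\B_0[1]\oplus\B_1$ is S-equivalent to $\Xi_3(\I_{l_0})$: imposing $\Hom(\B_1,G)=0$ forces $G$ to be the nonsplit extension of $\Xi_3(\I_{l_0})$ and $\Xi_3(\I_l)$, giving (a.iii). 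In case (b) the non-primitivity of $w$ is felt for the first time: the JH-factors are $\B_0[1]^{\oplus2}$ and $\B_1^{\oplus2}$, and the natural invariant stratifying the class is $\hom(\B_1,G)\in\{2,1\}$. The value $2$ yields the Gieseker properly semistable sheaves of (b.i), whose \emph{Gieseker} S-equivalence classes are parametrized by a $\PP^2$ while each such class is a $\CC^2$ of isomorphism types; the self-extensions of $\Xi_3(\I_{l_0})$ give the objects of $\gen{\B_1}^\perp$ in (b.ii); and $\hom(\B_1,G)=1$ isolates (b.iii). Finally, that these are precisely the classes destabilized on both sides of the wall follows from the inequalities in Lemma \ref{lem:keepstab} together with Corollary \ref{cor:M2vsMsig}.

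The main obstacle will be the bookkeeping that simultaneously tracks three pieces of structure on a single Bridgeland S-equivalence class: the JH-filtration inside $\C$, the Gieseker (semi)stability of $\Forg(G)$, and membership in $\gen{\B_1}^\perp$. Because $w$ is not primitive, one S-equivalence class contains genuine sheaves of $\MMM_2\setminus\NNN_2$, objects of $\gen{\B_1}^\perp$, and decomposable objects all at once, so one must compute the automorphism groups of the JH-factors and their induced action on the extension spaces in order to separate these strata and to distinguish Gieseker S-equivalence from Bridgeland S-equivalence. This extra layer is exactly what is absent from the primitive $d=1$ discussion of \cite[Sect.~5]{MS} that the argument otherwise parallels.
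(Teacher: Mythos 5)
Your plan matches the paper's own proof in all essentials: the paper likewise works with the simple objects $\B_1$, $\B_0[1]$, $\Xi_3(\I_l)$ of phase $\tfrac{1}{2}$ at $m_0$, shows via their stability that $\B_1$ must occur as a subobject and $\B_0[1]$ as a quotient of $G$ (the diagram \eqref{eqn:diagrimportant}, with middle term $D$ of class $[\B_1]-[\B_0]$ splitting the analysis into cases (a) and (b)), and then computes exactly the extension groups you describe --- $\hom^1(\Xi_3(\I_l),\B_1)=1$, $\hom^1(\B_0[1],C')=3$ giving the $\PP^2$ of (a.i), the $\PP^1$ coming from $\Hom(\B_0,\B_1)\cong\CC^2$, and $\Ext^1(\Xi_3(\I_{l_0}),\Xi_3(\I_{l_0}))\cong\CC^2$ --- stratifying case (b) by $\hom(\B_1,G)$ just as you propose. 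The only mechanical difference is in verifying that the constructed extensions are Gieseker semistable: rather than testing $\Forg(G)$ directly against subsheaves, the paper deduces $\sigma_m$-semistability for $m$ slightly above $m_0$ from \cite[Prop.~9.3]{Br1} and then invokes Corollary \ref{cor:M2vsMsig}.
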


\begin{proof}
	If $G\not \in \gen{\B_1}^{\perp}$, then $\B_1\to G$ is necessarily an injection in $\cat{A}$.
	Indeed, let $T:=\mathrm{Im}_{\cat{A}}(\B_1\to F)$.
	Since $\B_1$ is $\sigma_{m_0}$-stable, with $\mathrm{Re}\,(Z_{m_0}(\B_1))=0$, if $T\not\cong \B_1$, then $\mathrm{Re}\,( Z_{m_0}(T))>0$. This contradicts the semistability of $G$.
	Thus, $\B_1$ is a JH-factor of $G$.
	Since $\chi(\B_1,G)=0$, we have $\hom(\B_1,G)=\hom(G,\B_0[1])$.
	The same argument shows that $G\to \B_0[1]$ is necessarily a surjection in $\cat{A}$.
	Thus, $\B_0[1]$ is another JH-factor of $G$.
	
	Hence, as in the proof of Lemma \ref{lem:stabM2}, $G$ necessarily sits in the commutative diagram \eqref{eqn:diagrimportant}.
	Note that $D\in \cat{A}$, but unlike in Lemma \ref{lem:stabM2}, $D$ is not necessarily in $\coh(\PP^2,\B_0)$, because we have not assumed that $G$ is in $\coh(\PP^2,\B_0)$.
	
	If $D\in \gen{\B_1}^{\perp}$, then $D\cong\Xi_3(\I_l)$ for some line $l$, by \cite[Thm.~4.1]{BMMS}.
	Note that $D\in \coh(\PP^2,\B_0)$ if and only if $l\neq l_0$ and then, we are in case (a).
	If $D\not\in \gen{\B_1}^{\perp}$ or $l=l_0$, then $D$ is still properly $\sigma_{m_0}$-semistable, with JH-factors $\B_0[1]$ and $\B_1$ and we are in case (b).

	\smallskip
	
	Suppose we are in case (a) and let $G$ be a representative in the S-equivalence class such that $\hom(\B_1,G)\neq 0$.
	Note that an element in $\Hom^1(\B_0[1],\B_1)$ corresponds to an element in the projective line $\PP^1$ which is the exceptional locus of the map $\MMM_1\to F(Y)$ described in Proposition \ref{prop:d=1}.
	Taking the unique non-trivial extension of one of this $\B_0$-modules with $\Xi_3(\I_l)$ we obtain a $\PP^1$ of properly semistable $\B_0$-modules (this is (a.ii)).
	
	Now we start with an element in $\Hom^1(\Xi_3(\I_l),\B_1)$.
	By Example \ref{ex:RestrLines}, we have that
	\[
    \hom^1(\Xi_3(\I_l),\B_1)=h^1(\PP^1,\O_{\PP^1}(-1)\oplus \O_{\PP^1}(-2))=1.
    \]
	Let $C'\in \Hom^1(\Xi_3(\I_l),\B_1)$. Clearly $C'\in \coh(\PP^2,\B_0)$ since $\Xi_3(\I_l)$ and $\B_1$ are also in $\coh(\PP^2,\B_0)$.
	
	Note that $\hom^1(\B_0[1],C')=3$, because
	\begin{equation*}\label{eqn:extimp}
		0\to \Hom(\B_0,\B_1)\to \Hom^1(\B_0[1],C')\to \Hom(\B_0,\Xi_3(\I_l))\to 0.
	\end{equation*}
	Let $G\in \Hom^1(\B_0[1],C')$.
	We want to see that $G$ is a $\B_0$-module.
	We have
	\begin{equation*}
		0\to \H^{-1}(G)\to \B_0\to C'\to \H^0(G)\to 0.
	\end{equation*}
	Since $\B_0$ is torsion-free and $\rk(C')=\rk(\B_0)$, either $\B_0\to\C'$ is zero, or $\H^{-1}(G)=0$.
	Hence, the non-trivial extensions between $\B_0[1]$ and $C'$ are $\B_0$-modules and they are parametrized by a $\PP^2$.
	When the first extension is trivial, i.e., $C'=\B_1\oplus\Xi_3(\I_l)$, we recover the previous case.

	Finally, we want to see that these extensions $G$ are Gieseker semistable $\B_0$-modules.
	Since $G$ is $\sigma_{m_0}$-semistable, up to choosing $\eps$ small enough, $G$ is $\sigma_{m}$-semistable, for all $m \in (m_0 , m_0+\eps )$.
	Indeed, if not, by \cite[Prop.~9.3]{Br1}, the HN-factors of $G$ in the stability condition $\sigma_m$,
	for $m \in (m_0, m_0+\eps )$, would survive in the stability condition $\sigma_{m_0}$.
	This would contradict the $\sigma_{m_0}$-semistability of $G$.
	Since we have seen that $\MMM_2=\MMM^{\sigma_{m}}(\PP^2,\B_0;w)$ for all $m>m_0$, we get that $G \in \MMM_2$ and thus (a.i).
	If $G$ is properly semistable, then $G$ is the extension of two stable $\B_0$-modules, $G_1$ and $G_2$.
	Since $\hom(\B_1,G)\neq0$, we can suppose that $G_1\in \NNN_1$ and $G_2\in \MMM_1\setminus\NNN_1$ and we are in the aforementioned $\PP^1$.

\smallskip

	Now, suppose we are in case (a) and let $G$ be a representative in the S-equivalence class in $\gen{\B_1}^{\perp}$.
	Since $\hom^1(\B_1,\Xi_3(\I_l))=0$, we need to start with an element in $\Hom^1(\B_1,\B_0[1])$.
	By \cite[Ex.~2.11]{BMMS}, the only non-trivial extension in $\Hom^1(\B_1,\B_0[1])$ is $\Xi_3(\I_{l_0})$ and we get (a.iii).
	Thus we conclude the analysis of case (a).
	
\smallskip

	Suppose we are in case (b) and let $G$ be a representative in the S-equivalence class such that $\hom(\B_1,G)=2$.
	By the same argument as before, an extension $C$ of $\B_1$ with itself needs to be a subobject of $G$ in $\cat{A}$ while an extension $C'$ of $\B_0[1]$ with itself is a quotient of $G$ in $\cat{A}$.
	Note that necessarily $C=\B_1^{\oplus 2}$ and $C'=\B_0^{\oplus 2}[1]$.
	Hence we consider an element in $G\in \Hom^1(\B_0^{\oplus 2}[1],\B_1^{\oplus 2})$.
	
	Equivalently we can construct $G$ as the extension of two sheaves $G_1$ and $G_2$ in the exceptional locus of the map $\MMM_1\to F(Y)$ described in Proposition \ref{prop:d=1}.
	Each of them is parametrized by a $\PP^1$.
	But since the role of $G_1$ and $G_2$ is symmetric, we obtain that the S-equivalence classes of the $G$ as objects in $\MMM_2$ are parameterized by $\PP^1\times\PP^1$ quotiented by the natural involution.
	Thus, the S-equivalence classes of the $G$'s are parameterized by a $\PP^2$ and we obtain (b.i). Note that $\Ext^1(G_1,G_2)\cong\CC^2$.
	
    \smallskip

	Let $G$ be in $\gen{\B_1}^{\perp}$ and as in (b).
	Again, $G$ is obtained from an element in $\Hom^1(\B_1^{\oplus 2},\B_0^{\oplus 2}[1])$.
	Equivalently we can construct $G$ as the extension of the two unique non-trivial extensions in $\Hom^1(\B_1,\B_0[1])$.
	Each of them is $\Xi_3(\I_{l_0})$ and $\Ext^1(\Xi_3(\I_{l_0}),\Xi_3(\I_{l_0}))\cong\CC^2$. This is (b.ii)
	
	\smallskip
	
	The remaining indecomposable objects $G$ in case (b) have $\hom(\B_1,G)=1$ (as in (b.iii))
	and the last statement of the lemma follows from the fact that these are the only S-equivalence classes that contain the objects $G$ such that $\hom(\B_1,G)\neq 0$ and the objects $G'$ such that $\hom(G',\B_1)\neq 0$.
\end{proof}

\begin{rem}\label{rmk:fibcontr}
	Notice that the S-equivalence classes in (a) and (b) contain the $\B_0$-modules in cases (c.iii) and (c.iv) of Lemma \ref{lem:stabM2} respectively.
	Moreover, from Lemma \ref{lem:countext} and \cite[Prop.~9.3]{Br1}, it follows that a $\sigma_{m_0}$-semistable object $G$ remains semistable for $\sigma_m$, with $m>m_0$ if and only if $\hom(\B_1,G)$ is maximal in its S-equivalence class. This happens in cases (a.i) and (b.i) of the previous Lemma. Under these circumstances, two objects $G_1$ and $G_2$ in the same S-equivalence class in $\sigma_{m_0}$ belong to different S-equivalence classes in $\sigma_m$, for $m>m_0$. For this one uses that $G_1$ and $G_2$ are Gieseker (semi)stable and invokes Corollary \ref{cor:M2vsMsig}.
	On the other hand, $G$ remains semistable for $\sigma_m$, with $m<m_0$ if $\hom(\B_1,G)=0$. This happens in cases (a.iii) and (b.ii) of Lemma \ref{lem:countext}. It is clear that if $G$ is as in (b.iii), then $G$ is not $\sigma_m$-semistable for $m>m_0$ or $\frac{1}{4}<m<m_0$.	
\end{rem}

\subsection{Instanton sheaves}\label{subsection:instanton}

Now we want to give a geometric interpretation of $\MMM^{\sigma_{m}}(\PP^2,\B_0;w)$ for $m\leq m_0$.
The appropriate objects are the instanton sheaves.

\begin{defn}\label{def:instantons}
	We say that $E\in \coh(Y)$ is an \emph{instanton sheaf} if $E$ is a Gieseker semistable sheaf of rank $2$ and Chern classes
	$c_1(E) = 0$ and $c_2 (E) = 2$.
	When $E$ is locally free, we call it \emph{instanton bundle}.
\end{defn}

 An instanton sheaf according to the above definition would be called \emph{instanton sheaf of charge $2$} in the existing literature. In general, an \emph{instanton bundle of charge $s\geq 2$} is a locally free sheaf $E$ of rank $2$, Chern classes
	$c_1(E) = 0$ and $c_2 (E) = s$, and such that $H^1(Y,E(-1))=0$ (see, for example, \cite[Def.~2.4]{Kuz:V14}). It is easy to show that if the charge is minimal (i.e., $c_2(E)=2$), then the condition $H^1(Y,E(-1))=0$ is automatically satisfied (see \cite[Cor.~3.3]{KuzNew}).

\begin{rem}\label{rmk:nlf_inst}
By \cite[Thm.~3.5]{Druel}, each semistable instanton sheaf falls under one of the following cases:
\begin{enumerate}
\item[(1)] $E$ is stable and locally free.
\item[(2)] $E$ is stable but not locally free. In this case, $E$ is obtained by the construction in Example \ref{ex:objMd}. In fact, these are the only stable instanton sheaves that are not locally free.
\item[(3)] $E$ is properly semistable. In this situation, $E$ is extension of two ideal sheaves of lines in $Y$.
\end{enumerate}

Moreover, given a stable instanton bundle $E$, then $E(1)$ is globally generated \cite[Thm. 2.4]{Druel}, so $E$ is an Ulrich bundle.
Indeed, $E$ is associated to a non-degenerate smooth elliptic quintic $C$ via the Serre construction (see \cite[Cor.~2.6]{Druel} and compare it with Lemma \ref{lem:Ulr_geom}).
\end{rem}

The following will be crucial in our analysis.

\begin{lem}\label{lem:inst>B0stab}
	Let $E$ be a stable instanton bundle.
	Then $\Xi_3(E)$ is a stable $\B_0$-module.
\end{lem}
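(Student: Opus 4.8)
The plan is to reduce the statement to a wall-crossing analysis at the critical radius $m_0=\tfrac{\sqrt5}{8}$ and to extract a contradiction with the stability of $E$ from the only surviving destabilization. First I would record that $E\in\cat{T}_Y$: by Remark \ref{rmk:nlf_inst} a stable instanton bundle is Ulrich, hence a $\mu$-stable ACM bundle of rank $2$, so Lemma \ref{lem:ACM} applies (the boundary value $\mu(E)=0$ is harmless, since the vanishing $\Hom(\O_Y,E)=0$ used in that proof follows from the $\mu$-stability of $E$). Consequently $G:=\Xi_3(E)$ is defined and, because $\Xi_3(\cat{T}_Y)=\gen{\B_1}^{\perp}$, it lies in $\gen{\B_1}^{\perp}$; by \eqref{eqn:classXiI} and additivity its class is $w=2[\B_1]-2[\B_0]$, i.e. $\ch(G)=(0,4,-4)$. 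A cohomological computation via Lemma \ref{lem:TgBundle}, using the ACM and vanishing properties of $E$, shows that the cohomology of $G$ is concentrated in degree $0$, so that $G\in\cat{A}$ is a torsion object of pure dimension $1$.

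The heart of the argument is then a transfer of the destabilization analysis of Lemma \ref{lem:stabM2} to $G$. By Lemma \ref{lem:smstab-stab} it suffices to prove that $G$ is $\sigma_m$-stable for all $m\geq m_0$, which automatically forces $G$ to be a stable $\B_0$-module. Because $G\in\gen{\B_1}^{\perp}$, the candidate subobjects $\B_1$ and $\B_0[1]$ appearing in the proof of Lemma \ref{lem:stabM2} cannot occur (they require $\Hom(\B_1,G)\neq0$), and the $\B_2$-type destabilizer is excluded exactly as in Step~2 of Proposition \ref{prop:exACM}, i.e. by showing that $G$ is of the ``balanced'' type on its conic support rather than the unbalanced type \eqref{eqn:dolent}. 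This leaves a single possibility other than stability, namely case (c.ii): at $m=m_0$ the object $G$ is an extension $0\to A_1\to G\to A_2\to0$ in $\cat{A}$ of two $\sigma_{m_0}$-stable $\B_0$-modules $A_1,A_2$ of class $[\B_1]-[\B_0]$. Applying $\Hom(\B_1,-)$ to this sequence and using $\chi(\B_1,A_i)=0$ (Remark \ref{rmk:numB0}(v)), $\Hom^2(\B_1,A_i)=0$, and $G\in\gen{\B_1}^{\perp}$ forces $A_1,A_2\in\NNN_1$, whence $A_i\cong\Xi_3(\I_{l_i})$ for lines $l_i\neq l_0$ by Lemma \ref{lem:stabM1}(i).

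The contradiction comes from pulling this sequence back to $Y$. Since $0\to A_1\to G\to A_2\to0$ lives entirely in $\gen{\B_1}^{\perp}=\Xi_3(\cat{T}_Y)$, applying the equivalence $\Xi_3^{-1}$ yields a triangle $\I_{l_1}\to E\to\I_{l_2}\xrightarrow{+1}$ in $\Db(Y)$; because all three terms are genuine sheaves in degree $0$ (here the fact that the factors are honest $\B_0$-modules, hence $l_i\neq l_0$ and $\Xi_3(\I_{l_i})$ is a sheaf and not a complex, is essential), this triangle is a short exact sequence $0\to\I_{l_1}\to E\to\I_{l_2}\to0$. Thus $E$ would be an extension of two ideal sheaves of lines, and therefore properly semistable by Remark \ref{rmk:nlf_inst}(3), contradicting the stability of $E$. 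Hence case (c.ii) is impossible, $G$ is $\sigma_m$-stable for all $m\geq m_0$, and the claim follows. I expect the main obstacle to be the two technical inputs rather than the final contradiction: verifying that $G$ lies in the heart $\cat{A}$ with the correct support, and importing from the proof of Lemma \ref{lem:stabM2} the numerical bookkeeping that pins every potential destabilizing subobject to class $[\B_1]-[\B_0]$, thereby isolating case (c.ii) as the sole alternative to stability.
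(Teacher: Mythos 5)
Your proposal has a genuine gap at its first main step, and the gap sits exactly where the paper's proof does its real work. You assert that ``a cohomological computation via Lemma \ref{lem:TgBundle}, using the ACM and vanishing properties of $E$, shows that the cohomology of $G$ is concentrated in degree $0$''. Lemma \ref{lem:TgBundle} only computes the two groups $\RHom_{\Db(Y)}(\O_Y(2H),E)$ and $\RHom_{\Db(Y)}(\O_Y(-H),E)$ in terms of data on $\PP^2$; this is nowhere near enough to conclude that the complex $\Xi_3(E)=\Psi(\sigma^*E)$ has vanishing $\H^{-1}$, and indeed the paper never invokes Lemma \ref{lem:TgBundle} in its proof of Lemma \ref{lem:inst>B0stab}. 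What the paper actually does (Steps 1--4 of its proof) is present $E$ by the Serre construction $0\to\O_Y(-H)\to E\to\I_C(H)\to 0$, with $C$ an elliptic quintic \cite[Cor.~2.6]{Druel}, apply $\Psi\circ\sigma^*$, and prove --- through a case-by-case analysis according to whether $C$ misses $l_0$, meets it transversally, meets it with multiplicity, or in several points, together with several diagram chases --- that the connecting map $g:\H^{-1}(\Psi(\sigma^*\I_C(H)))\to\B_{-1}$ is injective, which is precisely the statement $\H^{-1}(\Psi(\sigma^*E))=0$. This occupies the bulk of the proof, and your proposal offers no substitute for it.

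The second part of your argument is also flawed as written, though less fatally. The trichotomy you import from Lemma \ref{lem:stabM2} is proved there under the hypothesis $F\in\MMM_2$, i.e. that $F$ is already a Gieseker semistable $\B_0$-module --- exactly what you are trying to prove for $G$ --- and, for instance, the exclusion of the $\B_2$-type destabilizer there uses Gieseker semistability via $0\neq\hom(\B_2,F)=h^0(\PP^2,F(-h))$. Your replacement, Step 2 of Proposition \ref{prop:exACM}, is a genericity and dimension-count argument about \emph{general} members of $\NNN_d$ (semicontinuity plus counting conics, quintics, and plane cubics in $Y$); it gives nothing for the particular object $\Xi_3(E)$. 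In fact the whole Bridgeland detour through Lemmas \ref{lem:stabM2} and \ref{lem:smstab-stab} is unnecessary: once one knows that $G$ is a pure one-dimensional sheaf of class $w$ lying in $\gen{\B_1}^\perp$, the paper's Step 5 concludes at the Gieseker level in a few lines. Any Gieseker destabilizing subsheaf $A\subset G$ has $\ch(A)=(0,2,z)$ with $z\geq -2$, while $\Hom(\B_1,A)\hookrightarrow\Hom(\B_1,G)=0$ and Serre duality force $\chi(\B_1,A)\leq 0$, i.e. $z\leq -2$ by \eqref{eqn:chiB1F}; hence $z=-2$, $A\in\gen{\B_1}^\perp$, so $A\cong\Xi_3(\I_l)$ by \cite[Thm.~4.1]{BMMS}, and $\I_l\hookrightarrow E$ contradicts the stability of $E$. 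Your final contradiction --- pulling an extension of two objects $\Xi_3(\I_{l_i})$ back to $Y$ and invoking Remark \ref{rmk:nlf_inst} --- is the same idea in Bridgeland clothing and is correct in itself, but on your route it is reached only after the two unjustified steps above.
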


\begin{proof}
	Let $F$ be a stable instanton bundle of minimal charge.
	By \cite[Cor. 2.6]{Druel} a stable Ulrich bundle $F$ of rank 2 is associated to a non-degenerate smooth elliptic quintic $C$ via the Serre construction
	\begin{equation}\label{eqn:serre1}
		0\to \O_Y(-H) \to F \to \I_C(H)\to 0.
	\end{equation}

	Note that $\Psi(\sigma^*\O_Y(-H))=\B_{-1}$ and $\Psi(\sigma^*\O_Y(H))=\B_2[1]$.
	Applying the functor $\Psi\circ \sigma^*$ to the short exact sequence
	\[
	0\to\I_C(H)\to\O_Y(H)\to\O_C(H)\to 0,
	\]
	we get
	\begin{equation}\label{eqn:defnf}
	0\to \H^{-1}(\Psi(\sigma^*\I_C(H)))\to \B_2 \stackrel{f}\to \Psi(\sigma^*\O_C(H)) \to \H^{0}(\Psi(\sigma^*\I_C(H)))\to 0.
	\end{equation}
	On the other hand, from \eqref{eqn:serre1} we obtain
	\begin{equation}\label{eqn:defng}
	0\to \H^{-1}(\Psi(\sigma^*F))\to \H^{-1}(\Psi(\sigma^*\I_C(H))) \stackrel{g}\to {\B_{-1}} \to \H^{0}(\Psi(\sigma^*F)) \to \H^{0}(\Psi(\sigma^*\I_C(H)))\to 0.
	\end{equation}
	Observe that $\H^{-1}(\Psi(\sigma^*\I_C(H)))\subseteq \B_2$ is a non-trivial torsion-free sheaf of rank $4$.
	Hence, the map $g$ is either injective or zero.

\bigskip

\noindent \textbf{Step 1:} Assume that the associated elliptic quintic $C$ does not intersect $l_0$.
	Since $C\cap l_0=\emptyset$, we have $\Psi(\sigma^*\O_C(H))= F_{\uC}[1]$, where $F_{\uC}$ is a rank $2$ torsion-free supported on the irreducible curve $\uC= \pi( \sigma^{-1}(C))\subset \PP^2$.
	Hence, \eqref{eqn:defnf} becomes
	\begin{equation*}
	0\to \H^{-1}(\Psi(\sigma^*\I_C(H)))\to \B_2 \stackrel{f}\to F_{\uC} \to \H^{0}(\Psi(\sigma^*\I_C(H)))\to 0.
	\end{equation*}
	On the one hand, note that $f$ could be surjective, or zero, or have cokernel supported on points. Indeed, by \cite[Lemma 2.13 (ii)]{BMMS}, the image of $f$ is either supported on points or trivial or it is a rank $2$ torsion-free subsheaf of $F_{\uC}$. As $F_{\uC}$ is torsion-free, the first possibility cannot be realized. Thus $f$ has to be as we claimed above.

	Now we observe that $g$ in \eqref{eqn:defng} is injective.
	Assume, by contradiction that $g$ is zero.
	Hence, we have the following exact sequence
	\begin{equation*}
	0\to {\B_{-1}} \to \H^{0}(\Psi(\sigma^*F)) \to \H^{0}(\Psi(\sigma^*\I_C(H)))\to 0.
	\end{equation*}
	If $\H^{0}(\Psi(\sigma^*\I_C(H)))$ is supported at most in dimension 0, then we have $0\neq \Hom^2(\B_1,\B_{-1})\hookrightarrow \Hom^2(\B_1,\H^0(\Psi(F)))$.
	Note that we have an exact triangle $$\H^{-1}(\Psi(\sigma^* F))[1]\to \Psi(\sigma^* F)\to \H^0(\Psi(\sigma^* F)),$$ so this would imply $\Hom^2(\B_1,\Psi(\sigma^* F))\neq 0$.
	But $F\in \cat{T}_Y$, so $\Psi(\sigma^* F)\in\langle \B_1\rangle^\perp$ and we get a contradiction.
	If $f$ and $g$ are zero, then $\H^{-1}(\Psi(\sigma^* F))=\B_2$ and we get a contradiction because $\Hom^0(\B_1,\H^{-1}(\Psi(\sigma^*F)))\neq 0$. The case when $f$ is surjective and $g$ is trivial can be excluded by a similar argument as we would have $\B_{-1}\cong \H^{0}(\Psi(\sigma^*F))$.

	Therefore, $g$ is injective and $\Psi(\sigma^*F)$ is a torsion sheaf with class $2[\B_1]-2[\B_0]$.

\bigskip

\noindent \textbf{Step 2:} Assume that the associated elliptic quintic $C$ intersects $l_0$ transversally in a point.
	Since $C\cap l_0=\set{p}$, we have $\Psi(\sigma^*\O_C(H))= \Psi(\O_{C'\cup \gamma}(H))$, where by abuse of notation we denote by $C'$ the strict transform of $C$ and $\gamma\subset D$ is the line $\sigma^{-1}(p)$. Hence, \eqref{eqn:defnf} becomes
	\begin{equation}\label{eqn:fe1}
	0\to \H^{-1}(\Psi(\sigma^*\I_C(H)))\to \B_2 \stackrel{f}\to \Psi(\O_{C'\cup \gamma}(H)) \to \H^{0}(\Psi(\sigma^*\I_C(H)))\to 0.
	\end{equation}
	
To characterize $\Psi(\O_{C'\cup \gamma}(H))$ better, consider the exact sequence on $\coh(\uY)$
	\begin{equation}\label{eqn:popline}
		0\to \I_{p,C'}(H)\to \O_{C'\cup \gamma}(H) \to \O_{\gamma}(H)\to 0.
	\end{equation}
On the one hand, we need to compute $\Psi(\O_{\gamma}(H))$. As $\gamma\subset D$ it makes sense to consider the ideal sheaf $\I_{\gamma,D}$ which is actually equal to $\I_{\sigma^{-1}(p),\sigma^{-1}(l_0)}=\sigma^*\I_{p,l_0}=\sigma^*(\O_{l_0}(-H))=\O_D(-H)$. Now, tensoring the exact sequence
		\begin{equation*}
			0 \to  \O_D(-H)\to     \O_D \to \O_{\gamma}\to 0
		\end{equation*}
by $D=H-h$, we have
		\begin{equation}\label{eqn:fe2}
			0 \to  \O_D(-h)\to     \O_D(D) \to \O_{\gamma}(H-h)\to 0.
		\end{equation}
By \cite[Ex. 2.11]{BMMS} and applying the functor $\Psi$, it provides the exact triangle
		\begin{equation*}
			 \B_{-1}[1]\to     \B_0[1] \to \Psi(\O_{\gamma}(H-h)).
		\end{equation*}
By construction, we know that $\Psi(\O_{\gamma}(H-h))$ is a torsion sheaf in degree $-1$ and we have the following exact sequence
		\begin{equation*}
			0\to \B_{-1}\to     \B_0 \to \H^{-1}(\Psi(\O_{\gamma}(H-h)))\to 0.
		\end{equation*}
By definition $\Psi(F\otimes \O_{\uY}(mh))=\pi_*(F\otimes \O_{\uY}(mh)\otimes \E\otimes \O_{\uY}(h))[1]=\Psi(F)\otimes \O_{\uY}(mh)$.
		Hence, if we tensor \eqref{eqn:fe2} by $\O_{\uY}(h)$ and we apply $\Psi$ again, we get that $\Psi(\O_{\gamma}(H))$ is a torsion sheaf in degree $-1$ and the following exact sequence
		\begin{equation}\label{eqn:psigammaH}
			0\to \B_{1}\to     \B_2 \to \H^{-1}(\Psi(\O_{\gamma}(H)))\to 0.
		\end{equation}
	Note that, $\Hom(\B_2,\H^{-1}(\Psi(\O_{\gamma}(H))))\cong\CC$.

Using the above discussion and \eqref{eqn:popline}, we get the commutative diagram
\begin{equation}\label{eqn:fediag}
	\xymatrix{&&0\ar[d]&0\ar[d]\\
		&0\ar[r]&\H^{-1}(\Psi(\sigma^*\I_C(H)))\ar[r]\ar[d]& \B_1\ar[d]\\ 
		&&\B_{2}\ar@{=}[r]\ar[d]^f& \B_2\ar[d] \\ 
		0\ar[r]&\H^{-1}(\Psi(\I_{p,C'}(H)))\ar[r]\ar@{=}[d]
		&\H^{-1}(\Psi(\O_{C'\cup \gamma}(H)))\ar[d]\ar[r]^-{h}&\H^{-1}(\Psi(\O_{\gamma}(H)))\ar[d]\ar[r]&0\\
		&\H^{-1}(\Psi(\I_{p,C'}(H)))\ar[r]& \H^{0}(\Psi(\sigma^*\I_C(H)))\ar[d] &0\\
		&&0&
	}
\end{equation}
Furthermore, we have $\Psi(\I_{p,C'}(H))\cong F_{\widetilde{C'}}[1]$, where $F_{\widetilde{C'}}$ is a rank $2$ torsion-free sheaf supported on $\widetilde{C'}= \pi(C')\subset \PP^2$ which is irreducible. Indeed, by definition, $\Psi(\I_{p,C'}(H))=\pi_*(\I_{p,C'}(H)\otimes \E\otimes \O_{\uY}(h))[1]$.
Since the fibers of $\pi$ restricted to $C'$ are only points or empty we have
$\Psi(\I_{p,C'}(H))=R^0\pi_*(\I_{p,C'}(H)\otimes \E\otimes \O_{\uY}(h))[1]$, where $R^0\pi_*(\I_{p,C'}(H)\otimes \E\otimes \O_{\uY}(h))$ is a sheaf supported on $\widetilde{C'}=\pi(C')$. Observe that $C'\subset \uY$ is a quartic, so the image $\pi(C')$ is either a line, a conic, or a quartic in $\PP^2$.
The first two possibilities are not realized and this means that $C'\to \widetilde{C'}$ is birational. Hence, by base chance, $R^0\pi_*(\I_{p,C'}(H)\otimes \E\otimes \O_{\uY}(h))$ has rank on $\widetilde{C'}$ equal to the rank of $\I_{p,C'}(H)\otimes \E\otimes \O_{\uY}(h)$ on $C'$, which is $2$.

We claim that $g$ in \eqref{eqn:defng} is injective. Assume, by contradiction that $g$ is zero. Hence, we have the following exact sequence
\begin{equation*}
0\to {\B_{-1}} \to \H^{0}(\Psi(\sigma^*F)) \to \H^{0}(\Psi(\sigma^*\I_C(H)))\to 0
\end{equation*}
and we have two cases depending on the behaviour morphism $f':=h\circ f:\B_2\to \H^{-1}(\Psi(\O_{\gamma}(H)))$.

\smallskip

\noindent{\emph{Case (a.1).}} If the map $f'$ is non-zero, then \eqref{eqn:fediag} yields the sequence
		\begin{equation*}\label{eqn:compn0}
			0 \to \H^{-1}(\Psi(\sigma^*\I_C(H)))\to \B_1 \stackrel{f_1}\to F_{\widetilde{C'}} \to \H^{0}(\Psi(\sigma^*\I_C(H)))\to 0,
		\end{equation*}
where $f_1$ could be either zero, or surjective, or have cokernel supported on points (see Step 1). If $\coker f_1=\H^{0}(\Psi(\sigma^*\I_C(H)))$ is supported at most in dimension $0$, then we have $0\neq \Hom^2(\B_1,\B_{-1})\hookrightarrow \Hom^2(\B_1,\H^0(\Psi(\sigma^*F)))$. So we get a contradiction with $\Psi(\sigma^* F)\in\langle \B_1\rangle^\perp$. Assume that $f_1$ and $g$ are both zero. Then $\H^{-1}(\Psi(\sigma^* F))=\B_1$ and we get a contradiction as $\Hom^0(\B_1,\H^{-1}(\Psi(\sigma^*F)))\neq 0$. Hence, $f_1$ is surjective and then, as in Step 1, $g$ is injective.

\smallskip

\noindent{\emph{Case (b.1).}} On the other hand, the map $f'$ could be zero, so $f$ would factor through $\B_2 \to \H^{-1}(\Psi(\I_{p,C'}(H)))$.
		In this case, we get a sequence
		\begin{equation*}\label{eqn:comp0}
			0 \to \H^{-1}(\Psi(\sigma^*\I_C(H)))\to \B_2 \stackrel{f_2}\to F_{\widetilde{C'}} \to K\to 0,
		\end{equation*}
		where $f_2$ could be either zero, or surjective, or have cokernel supported on points (see again Step 1).
		Moreover, we have
		\begin{equation}\label{eqn:comp02}
			0 \to K\to \H^{0}(\Psi(\sigma^*\I_C(H)))\to \H^{-1}(\Psi(\O_\gamma(H)))\to 0.
		\end{equation}
If $K$ is supported at most in dimension $0$, then
		\begin{equation*}
			\Ext^1(\B_1,\H^{0}(\Psi(\sigma^*\I_C(H))))=\Ext^1(\B_1,\H^{-1}(\Psi(\O_{\gamma}(H))))=0,
		\end{equation*}
		by \eqref{eqn:comp02} and \eqref{eqn:psigammaH}. So, again we have $0\neq \Hom^2(\B_1,\B_{-1})\hookrightarrow \Hom^2(\B_1,\H^0(\Psi(\sigma^*F)))$, contradicting $\Psi(\sigma^* F)\in\langle \B_1\rangle^\perp$.
		If $f_2$ and $g$ are zero, then $\H^{-1}(\Psi(\sigma^* F))=\B_2$ and we get a contradiction because $\Hom^0(\B_1,\H^{-1}(\Psi(\sigma^*F)))\neq 0$. As in the previous step, $f$ cannot be surjective whenever $g$ is trivial.
	
\smallskip

	Therefore, $g$ is injective and $\Psi(\sigma^*F)$ is a torsion sheaf with class $2[\B_1]-2[\B_0]$.

\bigskip

\noindent \textbf{Step 3:} Assume that the associated elliptic quintic $C$ intersects $l_0$ with multiplicity $m$ in a point.
	Since $C\cap l_0=\set{p}$ with multiplicity $m>1$, with the notation of Step 2, we have $\Psi(\sigma^*\O_C(H))= \Psi(\O_{C'\cup m\gamma}(H))$.
	Note that we have the following exact sequence on $\coh(\uY)$
	\begin{equation}\label{eqn:poplineM}
		0\to \I_{mp,C'}(H)\to \O_{C'\cup m\gamma}(H) \to \O_{m\gamma}(H)\to 0.
	\end{equation}
	Moreover $0\to \O_{(m-1)\gamma}(H)\to \O_{m\gamma}(H) \to \O_{\gamma}(H)\to 0$, so $\Psi(\O_{m\gamma}(H))$ is a successive extension of $\Psi(\O_{\gamma}(H))$.

	Then, we can also distinguish between two cases, depending on whether the morphism $\B_2\to \H^{-1}(\Psi(\O_{m\gamma}(H)))$ arising from \eqref{eqn:poplineM} and \eqref{eqn:defnf} is either non-zero (Case (a.2)), or $f$ factors through $\B_2 \to \H^{-1}(\Psi(\I_{p,C'}(H)))$ as in \eqref{eqn:defnf} (Case (b.2)).

	If we are in Case (b.2), exactly the same arguments as in Case (b.1) show that $\Psi(\sigma^*F)$ is a torsion sheaf with class $2[\B_1]-2[\B_0]$.
	So we can suppose that we are in Case (a.2) and we have the following diagram:
		\begin{equation*}
			\xymatrix{&&0\ar[d]&0\ar[d]\\
				&0\ar[r]&\H^{-1}(\Psi(\sigma^*\I_C(H)))\ar[r]\ar[d]& K'\ar[d]\\ 
				&&\B_{2}\ar@{=}[r]\ar[d]^f& \B_2\ar[d] \\ 
				0\ar[r]&\H^{-1}(\Psi(\I_{p,C'}(H)))\ar[r]\ar@{=}[d]
				&\H^{-1}(\Psi(\O_{C'\cup \gamma}(H)))\ar[d]\ar[r]&\H^{-1}(\Psi(\O_{m\gamma}(H)))\ar[d]\ar[r]&0\\
				&\H^{-1}(\Psi(\I_{p,C'}(H)))\ar[r]& \H^{0}(\Psi(\sigma^*\I_C(H)))\ar[d]\ar[r] &L'\ar[d]\ar[r]&0\\
				&&0&0
					\hole
			}
		\end{equation*}

		By the Horseshoe Lemma we have the following exact sequence
		\begin{equation*}
			0\to \B_{1}^{\oplus m}\to     \B_2^{\oplus m} \to \H^{-1}(\Psi(\O_{m\gamma}(H)))\to 0.
		\end{equation*}
		So, we have that $K'\cong \B_1$ and $L'\cong \H^{-1}(\Psi(\O_{(m-1)\gamma}(H)))$ and we get the following
		\begin{equation*}
			0 \to \H^{-1}(\Psi(\sigma^*\I_C(H)))\to \B_1 \stackrel{f_3}\to F_{\widetilde{C'}} \to \H^{0}(\Psi(\sigma^*\I_C(H)))\to  \H^{-1}(\Psi(\O_{(m-1)\gamma}(H)))\to 0,
		\end{equation*}
		where $f_3$ could be either zero, or surjective, or have cokernel supported on points (see the argument in Step 1).

	\medskip

	We claim that $g$ in \eqref{eqn:defng} is injective.
	Running the same machinery as in the previous steps, we assume, by contradiction, that $g$ is zero.
	Hence, we have the following exact sequence
	\begin{equation*}
	0\to {\B_{-1}} \to \H^{0}(\Psi(\sigma^*F)) \to \H^{0}(\Psi(\sigma^*\I_C(H)))\to 0.
	\end{equation*}
	If $\coker f_3$ is supported at most in dimension $0$,
	\[
		\Ext^1(\B_1,\H^{0}(\Psi(\sigma^*\I_C(H)))=\Ext^1(\B_1,\H^{0}(\Psi(\O_{m\gamma}(H))))=0.
	\]
	So, we have $0\neq \Hom^2(\B_1,\B_{-1})\hookrightarrow \Hom^2(\B_1,\H^0(\Psi(\sigma^*F)))$, contradicting $\Psi(\sigma^* F)\in\langle \B_1\rangle^\perp$.
	Thus, it remains to deal with the case when $f_3$ and $g$ are zero. But then $\H^{-1}(\Psi(\sigma^* F))=\B_1$ and we get a contradiction because $\Hom^0(\B_1,\H^{-1}(\Psi(\sigma^*F)))\neq 0$.

If $f$ factors through $\B_2 \to \H^{-1}(\Psi(\I_{p,C'}(H)))$ and $K$ is supported at most in dimension 0, then
		\begin{equation*}
			\Ext^1(\B_1,\H^{0}(\Psi(\sigma^*\I_C(H))))=\Ext^1(\B_1,\H^{-1}(\Psi(\O_{\gamma}(H))))=0,
		\end{equation*}
		by \eqref{eqn:comp02} and \eqref{eqn:psigammaH}. So, again we have $0\neq \Hom^2(\B_1,\B_{-1})\hookrightarrow \Hom^2(\B_1,\H^0(\Psi(\sigma^*F)))$ and we get a contradiction with $\Psi(\sigma^* F)\in\langle \B_1\rangle^\perp$.
		As in the previous steps, if $f_2$ and $g$ are zero, then $\H^{-1}(\Psi(\sigma^* F))=\B_2$ and we get a contradiction because $\Hom^0(\B_1,\H^{-1}(\Psi(\sigma^*F)))\neq 0$.
	Therefore, $\Psi(\sigma^*F)$ is a torsion sheaf with class $2[\B_1]-2[\B_0]$.

\bigskip

\noindent \textbf{Step 4:} Assume that the associated elliptic quintic $C$ intersects $l_0$ in $s$ distinct points (possibly with multiplicity).
	Since $C\cap l_0=\set{p_1,\ldots, p_s}$, with the notation of Steps 2 and 3, we have $\Psi(\sigma^*\O_C(H))= \Psi(\O_{C'\cup m_1\gamma_1\cup\ldots \cup m_s\gamma_s}(H))$.
	Note that we have the following exact sequence in $\coh(\uY)$
	\begin{equation}\label{eqn:poplinemore}
		0\to \I_{m_1p_1\cup\ldots\cup m_sp_s,C'}(H)\to \O_{C'\cup m_1\gamma_1\cup\ldots \cup m_s\gamma_s}(H) \to \bigoplus_{i=1}^s\O_{m_i\gamma_i}(H)\to 0,
	\end{equation}
	since $\gamma_i$ are disjoint.
	Then, we can reduce to the previous steps.

\medskip

	From Step 1-4, we get that if $F$ is a stable Ulrich bundle of rank $2$, then $\Psi(\sigma^*F)$ is a torsion sheaf with class $2[\B_1]-2[\B_0]$.

\bigskip

\noindent \textbf{Step 5:} 	We can now show that $\Psi(\sigma^*F)$ is stable.
	Suppose it is not, and let $E\hookrightarrow \Psi(\sigma^*F)$ be a destabilizing $\B_0$-module.
	Then, $\ch(E)=(0,2,z)$ for some $z> -2$.
	Since $\Hom(\B_1,E)\hookrightarrow \Hom(\B_1,\Psi(\sigma^*F))=0$, then $\chi(\B_1,E)\leq 0$.
	By \eqref{eqn:chiB1F}, we have $z=-2$ and $E\in \langle \B_1\rangle^{\perp}$.
	By \cite[Thm.~4.1]{BMMS}, one gets that $E\cong\Xi_3(\I_l)$ for some line $l\in F(Y)\setminus\set{l_0}$ and $\I_l\hookrightarrow F$ contradicts the stability of $F$.
\end{proof}

	Denote by $\MMM_Y^{inst}$ the moduli space of semistable instanton sheaves.
	By \cite[Thm.~4.8]{Druel}, $\MMM_Y^{inst}$ is isomorphic to the blow-up $f\colon\MMM_Y^{inst}\to J(Y)$ of the intermediate Jacobian $J(Y)$ of $Y$ along (a translate of) $-F(Y)$.
	Here $F(Y)$ is the Fano surface of lines in $Y$.
	Recall that the Abel--Jacobi map establishes and isomorphism $\mathrm{Alb}(F(Y))\stackrel{\sim}\to J(Y)$.
	Moreover, the Albanese morphism provides an embedding $F(Y)\hookrightarrow \mathrm{Alb}(F(Y))$ defined after picking a special point, in our case $l_0$ (see \cite{CG}).
	Recall that, the closure of the locus of stable non-locally free instanton sheaves (case (2) in Remark \ref{rmk:nlf_inst}) forms the exceptional divisor of $f$.
	Stable non-locally free instanton sheaves are associated to a smooth conic inside $Y$ via the Serre construction and they are sent to the residual line of the conic.
	
	We denote by $\overline{F(Y)}$ the strict transform of $F(Y)$ under $f$.
	Since we have chosen $l_0$ general (i.e., such that for any other line $l$ meeting $l_0$, the plane containing them intersects the cubic in three distinct lines), then $F(Y)\cap (-F(Y))$  is the Abel--Prym curve $C_{l_0}\subset J(Y)$ consisting of all lines inside $Y$ that intersect $l_0$ (e.g.~\cite[Sect. 5]{LN}).
	Note that $\overline{F(Y)}$ parametrizes properly semistable instanton sheaves that fall under case (3) in Remark \ref{rmk:nlf_inst} and are extensions of $\I_{l_0}$ and $\I_{l}$, for $l$ a line in $Y$ (possibly equal to $l_0$).
	Indeed, semistable instanton sheaves under case (3) have second Chern class $c_2(E)=l+l_0$.

	Therefore, $\overline{F(Y)}$ intersects the divisor contracted by $f$, in the locus where $E$ is semistable and it is the extension of $\I_{l_0}$ and $\I_{l}$, such that $l\cap l_0\neq \emptyset$.
	From the point of view of the conics, this corresponds to the case where the conic over $l$ degenerates to $l_0\cup l'$, where $l,l',l_0$ are coplanar and in general position.
	
\begin{thm}\label{thm:d=2}
	The moduli space $\MMM_2$ is the blow-up of $\MMM_Y^{inst}$ along $\overline{F(Y)}$.
\end{thm}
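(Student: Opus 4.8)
The plan is to deduce the statement from a single wall-crossing in the family of Bridgeland stability conditions $\sigma_m$ of Lemma \ref{lem:exstab}, the wall sitting at $m=m_0=\frac{\sqrt5}{8}$, reproducing one dimension higher the mechanism behind Proposition \ref{prop:d=1}. By Corollary \ref{cor:M2vsMsig} the chamber $m>m_0$ already gives $\MMM_2=\MMM^{\sigma_m}(\PP^2,\B_0;w)$ with $w=2[\B_1]-2[\B_0]$, so the task reduces to describing $\MMM^{\sigma_m}(\PP^2,\B_0;w)$ in the chamber $\frac14<m<m_0$ and to controlling the transition across $m=m_0$.

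First I would identify the moduli space below the wall with $\MMM_Y^{inst}$. The functor $\Xi_3\colon\cat{T}_Y\xrightarrow{\sim}\gen{\B_1}^\perp$ sends instanton sheaves, which all lie in $\cat{T}_Y$, to objects of class $w$ in $\gen{\B_1}^\perp$: Lemma \ref{lem:inst>B0stab} handles the stable locally free case (1) of Remark \ref{rmk:nlf_inst}, whose images are $\sigma_m$-stable for every $m>\frac14$ by Lemma \ref{lem:stabM2}(b); the non-locally free stable instantons (case (2)) arise from Example \ref{ex:objMd}; and the properly semistable ones (case (3)) map to extensions of the $\Xi_3(\I_l)$. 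Running this on the universal family produces a morphism $\MMM_Y^{inst}\to\MMM^{\sigma_m}(\PP^2,\B_0;w)$ for $m<m_0$, which I would show is an isomorphism by checking bijectivity on closed points — inverting the equivalence $\Xi_3$ and using the classification of Remark \ref{rmk:nlf_inst} — and étaleness through the matching of the $\Ext^1$ deformation spaces.

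Next I would read the wall-crossing off Lemma \ref{lem:countext} and Remark \ref{rmk:fibcontr}. An object of class $w$ that is $\sigma_{m_0}$-semistable but not in $\gen{\B_1}^\perp$ lies in one of the two S-equivalence classes of Lemma \ref{lem:countext}, of $\B_0[1]\oplus\B_1\oplus\Xi_3(\I_l)$ or of $\B_0^{\oplus 2}[1]\oplus\B_1^{\oplus 2}$. In each class the representatives with $\hom(\B_1,-)$ maximal are precisely those surviving for $m>m_0$, and they sweep out a $\PP^2$ (the loci (a.i) and (b.i) inside $\MMM_2$), whereas the unique representative lying in $\gen{\B_1}^\perp$ survives for $m<m_0$: the extension of $\Xi_3(\I_{l_0})$ with $\Xi_3(\I_l)$ in (a.iii), or the self-extension of $\Xi_3(\I_{l_0})$ in (b.ii). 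Transported through the previous identification, the locus of $\MMM_Y^{inst}$ met by the wall is exactly the image of the instanton sheaves that are extensions of $\I_{l_0}$ and $\I_l$, which by the discussion preceding the theorem is $\overline{F(Y)}$, and over each of its points the transition inserts a $\PP^2$.

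Finally I would promote this to an isomorphism with the blow-up. Sending a $\sigma_m$-stable object ($m>m_0$) to its $\sigma_{m_0}$-polystable representative and then to the associated instanton sheaf defines a proper birational morphism $\MMM_2\to\MMM_Y^{inst}$, an isomorphism over the complement of $\overline{F(Y)}$ with $\PP^2$-fibers over it; since $\MMM_Y^{inst}$ is smooth of dimension $5$ by \cite[Thm.~4.8]{Druel} and $\overline{F(Y)}\cong F(Y)$ is a smooth surface, the fiber dimension matches the codimension $3$ of a blow-up center. To see that the morphism is the blow-up and not merely a divisorial contraction, I would identify the $\PP^2$ over a point of $\overline{F(Y)}$ with the projectivized extension group governing it — $\PP(\Ext^1(\B_0[1],C'))\cong\PP^2$ in case (a), and the analogous $\Ext^1$ in case (b) — and match it with the projectivized normal bundle of $\overline{F(Y)}$ in $\MMM_Y^{inst}$, then invoke the universal property of blow-ups. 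The main obstacle is exactly this last step: pinning down the scheme structure of $\overline{F(Y)}$ inside $\MMM_Y^{inst}$, above all along its intersection with the exceptional divisor of $f\colon\MMM_Y^{inst}\to J(Y)$ where the instanton sheaves cease to be locally free, and verifying that the exceptional divisor produced by the wall-crossing is the projectivized normal bundle on the nose and not merely fiberwise.
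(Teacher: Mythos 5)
Your outline reproduces the paper's own strategy essentially step for step up to its final paragraph: above the wall, $\MMM_2=\MMM^{\sigma_m}(\PP^2,\B_0;w)$ by Corollary \ref{cor:M2vsMsig}; below the wall, the $\sigma_m$-semistable objects of class $w$ are exactly the images $\Xi_3(E)$ of semistable instanton sheaves, which is precisely the claim the paper proves inside the theorem and records as Corollary \ref{cor:MinstvsMsig}; and the wall itself is governed by Lemmas \ref{lem:stabM2} and \ref{lem:countext} together with Remark \ref{rmk:fibcontr}, with the loci (a.i) and (b.i) sweeping out $\PP^2$'s that collapse onto the S-equivalence classes of instanton sheaves having $\I_{l_0}$ as a Jordan--H\"older factor, i.e.\ onto $\overline{F(Y)}$. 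One gloss: for a stable non-locally-free instanton whose associated conic meets $l_0$, Example \ref{ex:objMd} does not apply as stated (it assumes the curve is disjoint from $l_0$); the paper reruns Steps 2--4 of the proof of Lemma \ref{lem:inst>B0stab} to get $\Xi_3(E)\in\coh(\PP^2,\B_0)$ and then Step 5 for stability. This is bookkeeping rather than a new idea, but it is needed.

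The genuine gap is the step you yourself flag as the main obstacle, and the paper's way around it is not the one you propose. You plan to exhibit the exceptional $\PP^2$'s as the projectivization of the normal bundle of $\overline{F(Y)}\subset\MMM_Y^{inst}$ and then invoke the universal property of the blow-up; that would indeed force you to control the scheme structure of $\overline{F(Y)}$ (in particular along its intersection with the exceptional divisor of $f\colon\MMM_Y^{inst}\to J(Y)$) and to match the conormal sheaf with a relative extension sheaf, and none of this is carried out --- nor is it anywhere in the paper. Instead the paper closes the argument by citing Luo's factorization theorem \cite[Thm.~2]{Luo}, which in the form used here says that a proper birational morphism of regular schemes, all of whose fibers over the image of the exceptional locus have dimension equal to the codimension of that image minus one, is automatically the blow-up along that image. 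The wall-crossing analysis already hands you exactly these hypotheses: a well-defined birational morphism $\MMM_2\to\MMM_Y^{inst}$ onto a smooth fivefold (\cite[Thm.~4.8]{Druel}), an isomorphism away from the surface $\overline{F(Y)}$, and every contracted fiber a $\PP^2$ by Lemma \ref{lem:countext} and Remark \ref{rmk:fibcontr}. So to complete your argument you must either supply the normal-bundle identification you defer, or --- as the paper does --- replace that entire step by the citation of \cite[Thm.~2]{Luo}, which requires no computation of normal bundles and no verification of a universal property.
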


\begin{proof}
	Let $E$ be a semistable instanton sheaf.
	We claim that if $\Xi_3(E)\in \coh(\PP^2,\B_0)$, then $\Xi_3(E)\in \MMM_2$ (i.e., it is semistable) and, by Lemma \ref{lem:stabM2}, $\Xi_3(E)\in \MMM^{\sigma_m}(\PP^2,\B_0;w)$ for all $m>\frac{1}{4}$.
	Combining Remark \ref{rmk:nlf_inst} and \cite[Ex.~2.4, Ex.~2.11, Step 5 in Prop.~3.3]{BMMS}, we can distinguish three cases where $\Xi_3(E)\in \coh(\PP^2,\B_0)$.
	
	One possibility is that $E$ is a stable instanton bundle.
	In this case, $\Xi_3(E)\in \MMM_2$ follows from Lemma \ref{lem:inst>B0stab}.

	Another possibility is that $E$ is a stable instanton sheaf which is not locally free.
	In that case $E$ can be associated to a smooth conic via the Serre construction.
	If the conic does not intersect the line of projection then $\Xi_3(E)\in \MMM_2$ follows from Example \ref{ex:objMd}.
	If the conic intersects the line of projection in one point or two points (even tangentially), then the same computations as in Steps 2, 3, and 4 of the proof of Lemma \ref{lem:inst>B0stab} show again that $\Xi_3(E)\in \coh(\PP^2,\B_0)$.
	By Step 5 of the proof of Lemma \ref{lem:inst>B0stab}, $\Xi_3(E)$ is stable, so in $\MMM_2$.
	
	Finally, the last possibility is that $E$ is a properly semistable sheaf and the 2 JH-factors are $\I_{l}$ and $\I_{l'}$, where eventually $l=l'$, but in any case $l,l'\neq l_0$.
	Then $\Xi_3(E)\in \MMM_2$ follows from a direct computation based on the fact that $\Xi_3(\I_{l})$ and $\Xi_3(\I_{l'})$ are in $\MMM_1^s$ (see Lemma \ref{lem:stabM1}).
	
	Hence, by \cite[Ex.~2.11]{BMMS}, the only cases in which $\Xi_3(E)\not\in \coh(\PP^2,\B_0)$ appear when $E$ is a properly semistable sheaf and $\I_{l_0}$ is a JH-factor. Indeed, this is the only case where $\Xi_3(E)\not\in \MMM_2$ and we need to push our analysis a bit further.

	When $\I_{l_0}$ and $\I_{l}$ with $l\neq l_0$ are the JH-factors of $E$, then $\hom(\Xi_3(E),\B_1)=1$.
	Hence, the HN-filtration of $\Xi_3(E)$ for $m > m_0=\frac{\sqrt{5}}{8}$ is
	\begin{eqnarray*}
		\B_0[1] \subset C[1] \subset \Xi_3(E),
	\end{eqnarray*}
	where $0\to C[1] \to \Xi_3(E) \to \B_1\to 0$ and $0\to \B_0[1] \to C[1] \to \Xi_3(\I_l)\to 0$ are exact sequences in the abelian category $\cat{A}$ which is the heart of the bounded $t$-structure in the stability condition in Lemma \ref{lem:exstab}.
If the two JH-factors of $E$ are both isomorphic to $\I_{l_0}$, then $\hom(\Xi_3(E),\B_1)=2$.
	Hence, the HN-filtration of $\Xi_3(E)$ for $m > m_0$ is
	\begin{eqnarray*}
		\B_0[1] \subset \B_0^{\oplus 2}[1] \subset C[1] \subset \Xi_3(E),
	\end{eqnarray*}
	where $0\to \B_0^{\oplus 2}[1] \to C[1] \to \B_1\to 0$ and $0\to C[1] \to \Xi_3(E) \to \B_1\to 0$ are exact sequences in the abelian category $\cat{A}$.

	In both cases, this means that $\Xi_3(E)$ is $\sigma_{m_0}$-semistable.
	As a consequence, up to choosing $\eps$ small enough, $\Xi_3(E)$ is $\sigma_{m}$-semistable, for all $m \in (m_0-\eps , m_0 )$.
	Indeed, if not, by \cite[Prop.~9.3]{Br1}, the HN-factors of $\Xi_3(E)$ in the stability condition $\sigma_m$,
	for $m \in (m_0-\eps, m_0 )$, would survive in the stability condition $\sigma_{m_0}$.
	This would contradict the $\sigma_{m_0}$-semistability of $\Xi_3(E)$.

	Since the quotient $\Xi_3(E)\to \B_1$ $\sigma_m$-destabilizes $\Xi_3(E)$, for $m>m_0$, we have that
	\begin{align*}
		\Xi_3(E)&\not\in \MMM^{\sigma_m}(\PP^2,\B_0;w)\qquad \text{ for }m>m_0:=\frac{\sqrt{5}}{8},\\
		\Xi_3(E)&\in \MMM^{\sigma_m}(\PP^2,\B_0;w)\qquad \text{ for }m\in (m_0-\eps,m_0]\text{ and }\eps>0\text{ small enough,}
	\end{align*}
where $w:=2[\B_1]-2[\B_0]$.

\smallskip
	
	We claim that $\MMM_Y^{inst}=\MMM^{\sigma_{m}}(\PP^2,\B_0;w)$, for all $m\in (m_0-\eps,m_0]$ and $\eps>0$ sufficiently small.
	More precisely, we need to show that, for $m \in (m_0-\eps , m_0 )$, the objects $\Xi_3(E)$ are the only $\sigma_m$-semistable objects in $\cat{A}$ with class $w$, when $E$ is a semistable instanton sheaf.
	First observe that, if $G \in\cat{A}$ is a $\sigma_m$-semistable object, for some $m \in (m_0-\eps,m_0)$ and with class $w$, then $G\in \langle\B_1\rangle^{\perp}$.
	Indeed, if $\hom(\B_1,G)\neq 0$, then the image $T:=\mathrm{Im}_{\cat{A}}(\B_1\to G)$ destabilizes $G$, for $m\in (m_0-\eps,m_0)$ and $\eps>0$ small enough, since $\mathrm{Re}\,( Z_{m_0}(T))>0$.

	By \cite[Prop.~9.3]{Br1}, up to replacing $\eps$, we can assume that all such objects $G$ are $\sigma_{m_0}$-semistable.
	By Lemma \ref{lem:keepstab}, we have two possibilities: either $G$ is $\sigma_{m}$-semistable for all $m\geq m_0$, or $G$ is properly $\sigma_{m_0}$-semistable and destabilizes for all $m>m_0$.
	In the first case, $G$ is a (semi)stable element of $\NNN_2$ by Lemma \ref{lem:smstab-stab} and the discussion before.
	Thus, by the proof of Proposition \ref{prop:exACM}, $\Xi_3^{-1}(G)$ is either a balanced ACM bundle of rank $2$ (i.e., an instanton bundle) or as in case (2) of Remark \ref{rmk:nlf_inst}.

	If $G$ destabilizes for all $m>m_0$, then $G$ needs to be in cases (a) and (b) of Lemma \ref{lem:countext}.
	Since $G\in \gen{\B_1}^\perp$, Lemma \ref{lem:countext} tell us that $G\cong \Xi_3(E)$ where $E$ is a properly semistable sheaf with $\I_{l_0}$ as a JH-factor.
	
	\smallskip
	
	Having proved this, we are ready to show that $\MMM_2$ is the blow-up of $\MMM_Y^{inst}$ along $\overline{F(Y)}$.
	In view of Corollary \ref{cor:M2vsMsig}, one has to study the objects $F\in\MMM^{\sigma_{m}}(\PP^2,\B_0;w)=\MMM_2$, for all $m>m_0$ and $w=2[\B_1]-2[\B_0]$, which become $\sigma_{m_0}$-semistable with JH-factors as in cases (c.iii) and (c.iv) of Lemma \ref{lem:stabM2}.
	Indeed, by Lemma \ref{lem:stabM2}, these are the only objects that could be contracted.
	By Remark \ref{rmk:fibcontr}, the ones falling in case (a.i) of Lemma \ref{lem:countext} get contracted to the S-equivalence classes of the instanton sheaves which are extensions of $\I_{l_0}$ and $\I_l$ ($l\neq l_0$).
	For the same reason, the ones in case in case (b.i) of Lemma \ref{lem:countext} are contracted to the S-equivalence class of the instanton sheaves which are extensions of $\I_{l_0}$ with itself.
	Moreover, again by Lemma \ref{lem:countext}, each contracted fiber is $\PP^2$ and the birational map $\MMM_2\to \MMM_Y^{inst}$ is a well-defined morphism.
	
	Applying \cite[Thm.~2]{Luo}, we conclude that $\MMM_2$ is isomorphic to the blow-up of $\MMM_Y^{inst}$ at $\overline{F(Y)}$.
\end{proof}

As a corollary of the previous proof we get the following result which is of interest in itself.
\begin{cor} \label{cor:MinstvsMsig}
	Let $w=2[\B_1]-2[\B_0]$ and $m_0=\frac{\sqrt{5}}{8}$.
	Then $\MMM_Y^{inst}=\MMM^{\sigma_{m}}(\PP^2,\B_0;w)$, for all $m\in (m_0-\eps,m_0]$ and $\eps>0$ sufficiently small.
\end{cor}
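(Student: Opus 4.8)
The plan is to observe that this statement was, in essence, already established in the course of proving Theorem \ref{thm:d=2}, so the task reduces to isolating and reassembling the relevant steps into a clean bijection — realized in families — between semistable instanton sheaves $E$ on $Y$ and $\sigma_m$-semistable objects of class $w$ in $\cat{A}$ for $m$ slightly below $m_0$, the correspondence being $E\mapsto\Xi_3(E)$.

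First I would check that $\Xi_3$ sends every semistable instanton sheaf to a $\sigma_m$-semistable object of class $w$ for $m\in(m_0-\eps,m_0]$. By Remark \ref{rmk:nlf_inst} there are only three types of $E$. Whenever $\Xi_3(E)\in\coh(\PP^2,\B_0)$ — which by Lemma \ref{lem:inst>B0stab} (and Steps 2--4 therein), together with Example \ref{ex:objMd}, covers the stable bundles, the stable non-locally-free sheaves, and the properly semistable sheaves whose Jordan--H\"older factors are $\I_l,\I_{l'}$ with $l,l'\neq l_0$ — Corollary \ref{cor:M2vsMsig} gives $\Xi_3(E)\in\MMM_2=\MMM^{\sigma_m}(\PP^2,\B_0;w)$ for every $m>\tfrac14$, in particular near $m_0$. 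The remaining case is $E$ properly semistable with $\I_{l_0}$ as a Jordan--H\"older factor; here the explicit HN-filtration computed in the proof of Theorem \ref{thm:d=2} shows $\Xi_3(E)$ is $\sigma_{m_0}$-semistable, and openness of semistability via \cite[Prop.~9.3]{Br1} yields $\sigma_m$-semistability for all $m\in(m_0-\eps,m_0)$.

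For the converse inclusion I would take a $\sigma_m$-semistable $G\in\cat{A}$ of class $w$ with $m\in(m_0-\eps,m_0)$ and show it has the form $\Xi_3(E)$. The key point is that such $G$ lies in $\gen{\B_1}^{\perp}$: if $\hom(\B_1,G)\neq0$ then $T:=\mathrm{Im}_{\cat{A}}(\B_1\to G)$ satisfies $\mathrm{Re}(Z_{m_0}(T))>0$ and hence destabilizes $G$ just below $m_0$. Pushing $G$ up to $m=m_0$ by \cite[Prop.~9.3]{Br1} and invoking Lemma \ref{lem:keepstab}, exactly one of two things happens: either $G$ stays $\sigma_m$-semistable for all $m\geq m_0$, in which case Lemma \ref{lem:smstab-stab} places $G$ in $\NNN_2$ and Proposition \ref{prop:exACM} (with Remark \ref{rmk:nlf_inst}) identifies $\Xi_3^{-1}(G)$ with an instanton bundle or a non-locally-free stable instanton sheaf; or $G$ is destabilized for $m>m_0$, in which case Lemma \ref{lem:countext} forces $G\cong\Xi_3(E)$ with $E$ properly semistable having $\I_{l_0}$ among its Jordan--H\"older factors. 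Either way $G$ arises from a semistable instanton sheaf.

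Finally, since $\MMM_Y^{inst}$ is a coarse moduli space by \cite{Druel} and $\MMM^{\sigma_m}(\PP^2,\B_0;w)$ corepresents the moduli functor of $\sigma_m$-semistable objects (as recorded after Lemma \ref{lem:exstab}), the bijection above — being induced by the exact functor $\Xi_3$ and hence compatible with base change — upgrades to an isomorphism of moduli spaces. I do not expect a genuinely new obstacle: the entire content is already contained in the proof of Theorem \ref{thm:d=2}. The only mildly delicate point is verifying that the set-theoretic identification is functorial in families rather than a mere bijection on closed points, and this follows from the fact that $\Xi_3$ is a Fourier--Mukai-type equivalence onto $\cat{T}_Y$, which therefore operates in families.
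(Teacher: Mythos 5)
Your overall strategy is exactly the paper's: the corollary is obtained by isolating the claim established inside the proof of Theorem \ref{thm:d=2}, and your converse direction (orthogonality to $\B_1$ for objects semistable just below the wall, pushing up to $m_0$ via \cite[Prop.~9.3]{Br1}, then Lemma \ref{lem:keepstab}, Lemma \ref{lem:smstab-stab} with Proposition \ref{prop:exACM}, and Lemma \ref{lem:countext}) reproduces the paper's argument step by step; the closing remark on functoriality in families is consistent with what the paper leaves implicit.

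There is, however, one genuine flaw in your first half. You assert that Corollary \ref{cor:M2vsMsig} gives $\MMM_2=\MMM^{\sigma_m}(\PP^2,\B_0;w)$ for \emph{every} $m>\tfrac{1}{4}$. That corollary only states the equality for $m>m_0$, and the equality is false below the wall: if it held on all of $(\tfrac{1}{4},\infty)$, then combined with the very statement you are proving it would force $\MMM_2=\MMM_Y^{inst}$, contradicting Theorem \ref{thm:d=2}, which exhibits $\MMM_2$ as a nontrivial blow-up of $\MMM_Y^{inst}$. Consequently, for the instanton sheaves $E$ whose image $\Xi_3(E)$ is a sheaf (stable bundles, stable non-locally-free sheaves, and properly semistable sheaves avoiding $l_0$), your citation only yields $\sigma_m$-semistability for $m>m_0$ — precisely the wrong side of the wall for this corollary, which concerns $m\in(m_0-\eps,m_0]$. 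The repair is what the paper actually does: since $E\in\cat{T}_Y$, one has $\Xi_3(E)\in\gen{\B_1}^{\perp}$, so once $\Xi_3(E)\in\MMM_2$ is known it lies in $\NNN_2$, and then Lemma \ref{lem:stabM2}(b) — not Corollary \ref{cor:M2vsMsig} — gives $\sigma_m$-(semi)stability for all $m>\tfrac{1}{4}$, in particular on $(m_0-\eps,m_0]$. With that substitution your proof is complete and coincides with the paper's.
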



\medskip

{\small\noindent{\bf Acknowledgements.}
It is a pleasure to thank Nick Addington, Asher Auel, Marcello Bernardara, Robin Hartshorne, Daniel Huybrechts, Nathan Ilten, Sukhendu Mehrotra, Nicolas Perrin, Antonio Rapagnetta, Pawel Sosna, and Olivier Wittenberg for very useful conversations and comments.
We are also very grateful to the referee for pointing out several inaccuracies and a mistake in the statement of an early version of Theorem \ref{thm:mainB0}.
Parts of this paper were written while the three authors were visiting the University of Utah, the University of Bonn, the Ohio State University, and the University of Barcelona. The warm hospitality from these institutions is gratefully acknowledged.
}



\begin{thebibliography}{99}



\bibitem{AGV} D.~Abramovich, T.~Graber, A.~Vistoli, \emph{Gromov-Witten theory of Deligne-Mumford stacks}, Amer.~J.~Math.~{\bf 130} (2008), 1337--1398.


\bibitem{arr} E.~Arrondo, \emph{A Home-Made Hartshorne--Serre Correspondence}, Rev.~Mat.~Complut.~{\bf 20} (2007), 423--443.

\bibitem{ArrondoGrana} E.~Arrondo, B.~Gra{\~n}a, \emph{Vector bundles on {$G(1,4)$} without intermediate cohomology}, J.~Algebra {\bf 214} (1999), 128--142.

\bibitem{AM} E.~Arrondo, C.~Madonna, \emph{Curves and vector bundles on quartic threefolds}, J.~Korean Math.~Soc.~{\bf 46} (2009), 589--607.


\bibitem{BM} A.~Bayer, E.~Macr\`i, \emph{The space of stability conditions on the local projective plane}, Duke Math.~J.~{\bf 160} (2011), 263--322.

\bibitem{BM:projectivity} A.~Bayer, E.~Macr\`i, \emph{Projectivity and birational geometry of Bridgeland moduli spaces}, J.~Amer.~Math.~Soc.~{\bf 27} (2014), 707--752.

\bibitem{BM:MMP} A.~Bayer, E.~Macr\`i, \emph{MMP for moduli of sheaves on K3s via wall-crossing: nef and movable cones, Lagrangian fibrations}, to appear in: Invent.~Math., arXiv:1301.6968.

\bibitem{be} A.~Beauville, \emph{Vari\'et\'es de Prym et jacobiennes interm\'ediaires}, Ann.~Sci.~Ec.~Norm.~Sup.~{\bf 10} (1977), 309--391.

\bibitem{be2} A.~Beauville, \emph{Sous-vari\'et\'es sp\'eciales des vari\'et\'es de Prym}, Compositio Math.~{\bf 45} (1981), 357--383.

\bibitem{Beauvillecubic} A.~Beauville, \emph{Vector bundles on the cubic threefold}, in: Symposium in Honor of C.H.~Clemens (Salt Lake City, 2000), Contemp.~Math., vol.~312, Amer.~Math.~Soc., Providence (2002), 71--86.

\bibitem{BB} M.~Bernardara, M.~Bolognesi, \emph{Derived categories and rationality of conic bundles}, Compositio Math.~{\bf 149} (2013), 1789--1817.

\bibitem{BMMS} M.~Bernardara, E.~Macr\`i, S.~Mehrotra, P.~Stellari, \emph{A categorical invariant for cubic threefolds}, Adv.~Math.~{\bf 229} (2012), 770--803.

\bibitem{BBR} I.~Biswas, J.~Biswas, G.V.~Ravindra, \emph{On some moduli spaces of stable vector bundles on cubic and quartic threefolds}, J.~Pure Appl.~Algebra {\bf 212} (2008), 2298--2306.

\bibitem{BrambillaFaenzi} M.C.~Brambilla, D.~Faenzi, \emph{Moduli spaces of arithmetically Cohen--Macaulay bundles on Fano manifolds of the principal series},
 Boll.~Unione Mat.~Ital.~{\bf 2} (2009), 71--91.

\bibitem{Br1} T.~Bridgeland, \emph{Stability conditions on K3 surfaces}, Duke Math.~J.~{\bf 141} (2008), 241--291.

\bibitem{Br} T.~Bridgeland, \emph{Stability conditions on triangulated categories}, Ann.~Math.~{\bf 166} (2007), 317--346.

\bibitem{Ca} C.~Cadman, \emph{Using stacks to impose tangency conditions on curves}, Amer.~J.~Math.~{\bf 129} (2007), 405--427.

\bibitem{C} A.~C\u{a}ld\u{a}raru, \emph{Derived categories of twisted sheaves on Calabi-Yau manifolds}, PhD-Thesis, Cornell University (2000).

\bibitem{CH1} M.~Casanellas, R.~Hartshorne, \emph{ACM bundles on cubic surfaces}, J.~Eur.~Math.~Soc.~{\bf 13} (2011), 709--731.

\bibitem{CH} M.~Casanellas, R.~Hartshorne, F.~ Gleiss, F.-O.~Schreyer, \emph{Stable Ulrich bundles}, Int.~J.~Math.~{\bf 23} (2012) 1250083--1250133.

\bibitem{ChiantiniFaenzi} L.~Chiantini, D.~Faenzi, \emph{Rank 2 arithmetically Cohen-{M}acaulay bundles on a general quintic surface}, Math.~Nachr.~{\bf 282} (2009), 1691--1708.

\bibitem{ChiantiniMadonna2} L.~Chiantini, C.~K.~Madonna, \emph{ACM bundles on general hypersurfaces in {$\mathbb P^5$} of low degree}, Collect.~Math.~{\bf 56} (2005), 85--96.

\bibitem{CG} C.H.~Clemens, P.~Griffiths, \emph{The intermediate Jacobian of the cubic threefold}, Ann.~Math.~{\bf 95} (1972), 281--356.

\bibitem{Druel} S.~Druel, \emph{Espace des modules des faisceaux de rang $2$ semi-stables de classes de Chern $c_1=0$, $c_2=2$ et $c_3=0$ sur la cubique de $\PP^4$}, Internat.~Math.~Res.~Notices {\bf 2000}, 985--1004.

\bibitem{eisenbud} D.~Eisenbud, \emph{Homological algebra on a complete intersection, with an application to group representations}, Trans.~Amer.~Math.~Soc.~{\bf 260} (1980), 35--64.

\bibitem{HUB} J.~Herzog, B.~Ulrich, J.~Backelin, \emph{Linear maximal Cohen--Macaulay modules over strict complete intersections}, J.~Pure Appl.~Algebra {\bf 71} (1991), 187--202.

\bibitem{huy} D.~Huybrechts, \emph{Fourier--Mukai transforms in algebraic geometry}, Oxford Mathematical Monographs (2006).

\bibitem{HL} D.~Huybrechts, M.~Lehn, \emph{The geometry of moduli spaces of sheaves}, Cambridge Mathematical Library, Cambridge University Press, Cambridge, second edition, (2010).

\bibitem{Iliev} A.~Iliev, \emph{Minimal sections of conic bundles}, Boll.~Unione Mat.~Ital.~{\bf 8} (1999), 401--428.

\bibitem{IM} A.~Iliev, D.~ Markushevich, \emph{The Abel-Jacobi map for a cubic threefold and periods of Fano threefolds of degree 14}, Doc. Math.~{\bf 5} (2000), 23--47.

\bibitem{KLS} D.~Kaledin, M.~Lehn, C.~Sorger, \emph{Singular symplectic moduli spaces}, Invent.~Math.~{\bf 164} (2006), 591--614.

\bibitem{KS} M.~Kontsevich, Y.~Soibelman, \emph{Stability structures, motivic Donaldson-Thomas invariants and cluster transformations}, arXiv:0811.2435.

\bibitem{Kuz:V14} A.~Kuznetsov, \emph{Derived category of a cubic threefold and the variety $V\sb {14}$}, Proc.~Steklov Inst.~Math.~{\bf 246} (2004), 171--194.

\bibitem{Kuz:4fold} A.~Kuznetsov, \emph{Derived categories of cubic fourfolds}, in: Cohomological and geometric approaches to rationality problems, 219--243, Progr.~Math.~{\bf 282}, Birkh\"{a}user Boston, Boston (2010).

\bibitem{Kuz:Quadric} A.~Kuznetsov, \emph{Derived categories of quadric fibrations and intersections of quadrics}, Adv.~Math.~{\bf 218} (2008), 1340--1369.

\bibitem{KuzHPD} A.~Kuznetsov, \emph{Homological projective duality}, Publ.~Math.~Inst.~Hautes \'{E}tudes Sci.~{\bf 105} (2007), 157--220.

\bibitem{KuzNew} A.~Kuznetsov, \emph{Instanton bundles on Fano threefolds}, Cent.~Eur.~J.~Math.~{\bf 10} (2012), 1198--1231.

\bibitem{KM} A.~Kuznetsov, D.~Markushevich, \emph{Symplectic structures on moduli spaces of sheaves via the Atiyah class}, J.~Geom.~Phys.~{\bf 59} (2009), 843--860.

\bibitem{LN} M.~Lahoz, J.C.~Naranjo, \emph{Theta-duality on Prym varieties and a Torelli theorem}, Trans.~Amer.~Math.~Soc.~{\bf 365} (2013), no.~10, 5051--5069.

\bibitem{Lieb:twisted} M.~Lieblich, \emph{Moduli of twisted sheaves}, Duke Math.~J.~{\bf 138} (2007), 23--118.

\bibitem{Lieb} M.~Lieblich, \emph{Moduli of complexes on a proper morphism}, J.~Alg.~Geom.~{\bf 15} (2006), 175--206.

\bibitem{Luo} Z.~Luo, \emph{Factorization of birational morphisms of regular schemes}, Math.~Z.~{\bf 212} (1993), 505--509.

\bibitem{MS} E.~Macr\`i, P.~Stellari, \emph{Fano varieties of cubic fourfolds containing a plane}, Math.~Ann.~{\bf 354} (2012), 1147--1176.

\bibitem{Madonna2000} C.~Madonna, \emph{Rank-two vector bundles on general quartic hypersurfaces in {${\mathbb P}^4$}}, Rev.~Mat.~Complut.~{\bf 13} (2000), 287--301.

\bibitem{Madonna2005} C.~Madonna, \emph{Rank 4 vector bundles on the quintic threefold}, Cent.~Eur.~J.~Math.~{\bf 3} (2005), 404--411.

\bibitem{MT} D.~Markushevich, A.~S.~Tikhomirov, \emph{The Abel--Jacobi map of a moduli component of vector bundles on the cubic threefold}, J.~Alg.~Geom.~{\bf 10} (2001), 37--62.

\bibitem{Milne} J.S.~Milne, \emph{\'{E}tale Cohomology}, Princeton Mathematical Series {\bf 33}, Princeton University Press (1980).

\bibitem{MiroRoigPons} R.M.~Mir{\'o}-Roig, J.~Pons--Llopis, \emph{N-dimensional Fano varieties of wild representation type}, arXiv:1011.3704.


\bibitem{OrlovMatrix} D.~Orlov, \emph{Derived categories of coherent sheaves and triangulated categories of singularities}, Algebra, arithmetic, and geometry: in honor of Yu.I.~Manin, Vol.~II, 503--531, Progr.~Math., 270, Birkh\"{a}user Boston, Inc., Boston (2009).

\bibitem{Orlov} D.~Orlov, \emph{Projective bundles, monoidal transformations, and derived categories of coherent sheaves}, Russian Acad.~Sci.~Izv.~Math.~{\bf 41} (1993), 133--141.

\bibitem{PonsLlopisTonini} J.~Pons--Llopis, F.~Tonini, \emph{ACM bundles on Del Pezzo surfaces}, Le Matematiche~{\bf 64} (2009), 177--211.

\bibitem{simp} C.~Simpson, \emph{Moduli of representations of the fundamental group of a smooth projective variety I}, Inst.~Hautes \'Etudes Sci.~Publ.~Math.~{\bf 79} (1994), 47--129.

\bibitem{starr} J.~Starr, \emph{Rational curves on hypersurfaces}, PhD thesis, Harvard University, 2000.

\bibitem{toda} Y.~Toda, \emph{Moduli stacks and invariants of semistable objects on K3 surfaces}, Adv.~Math.~{\bf 217}, 2736--2781 (2008).

\bibitem{Toda:Extremal} Y.~Toda, \emph{Stability conditions and extremal contractions}, Math.~Ann.~{\bf 357} (2013), 631--685.

\bibitem{Yo} Y.~Yoshino, \emph{Cohen--Macaulay modules over Cohen--Macaulay rings}, London Math. Soc. Lecture Note Ser. {\bf 146}, Cambridge Univ. Press, Cambridge (1990).



\end{thebibliography}
\end{document}